\documentclass[10pt]{amsart}
\PassOptionsToPackage{shortlabels}{enumitem}
\usepackage{amssymb, enumitem}
\usepackage[all]{xy}
\usepackage{hyperref, aliascnt}
\usepackage[english]{babel}
\setcounter{tocdepth}{1}
\usepackage{enumitem}
\usepackage{bbm}
\usepackage{tikz-cd}
\usepackage{comment}



\def\today{\number\day\space\ifcase\month\or   January\or February\or
   March\or April\or May\or June\or   July\or August\or September\or
   October\or November\or December\fi\   \number\year}

\theoremstyle{definition}
\newtheorem{lma}{Lemma}[section]

\newaliascnt{thmCt}{lma}
\newtheorem{thm}[thmCt]{Theorem}
\aliascntresetthe{thmCt}

\newaliascnt{corCt}{lma}
\newtheorem{cor}[corCt]{Corollary}
\aliascntresetthe{corCt}

\newaliascnt{propCt}{lma}
\newtheorem{prop}[propCt]{Proposition}
\aliascntresetthe{propCt}

\newtheorem*{thm*}{Theorem}
\newtheorem*{cor*}{Corollary}
\newtheorem*{prop*}{Proposition}

\newcounter{theoremintro}
\newtheorem{thmintro}[theoremintro]{Theorem}

\newaliascnt{pgrCt}{lma}

\aliascntresetthe{pgrCt}

\newaliascnt{dfCt}{lma}
\newtheorem{df}[dfCt]{Definition}
\aliascntresetthe{dfCt}

\newaliascnt{remCt}{lma}
\newtheorem{rem}[remCt]{Remark}
\aliascntresetthe{remCt}

\newaliascnt{remsCt}{lma}

\aliascntresetthe{remsCt}

\newaliascnt{egCt}{lma}

\aliascntresetthe{egCt}

\newaliascnt{egsCt}{lma}

\aliascntresetthe{egsCt}

\newaliascnt{qstCt}{lma}

\aliascntresetthe{qstCt}

\newaliascnt{pbmCt}{lma}

\aliascntresetthe{pbmCt}

\newaliascnt{notaCt}{lma}
\newtheorem{nota}[notaCt]{Notation}
\aliascntresetthe{notaCt}

\newcommand{\beq}{\begin{equation}}
\newcommand{\eeq}{\end{equation}}
\newcommand{\beqa}{\begin{eqnarray*}}
\newcommand{\eeqa}{\end{eqnarray*}}
\newcommand{\bal}{\begin{align*}}
\newcommand{\eal}{\end{align*}}
\newcommand{\bi}{\begin{itemize}}
\newcommand{\ei}{\end{itemize}}
\newcommand{\be}{\begin{enumerate}}
\newcommand{\ee}{\end{enumerate}}

\newcommand{\Z}{{\mathbb{Z}}}
\newcommand{\R}{{\mathbb{R}}}
\newcommand{\C}{{\mathbb{C}}}
\newcommand{\N}{{\mathbb{N}}}

\newcommand{\B}{{\mathcal{B}}}

\newcommand{\crn}{{\mathsf{c}}}
\newcommand{\ran}{{\mathsf{r}}}
\newcommand{\sor}{{\mathsf{s}}}

\pagenumbering{arabic}

\newcommand{\id}{{\mathrm{id}}}





\newcommand{\I}{\infty}

\title[]{Uniqueness theorems for $L^p$-operator graph algebras}

\date{\today}

\thanks{The first named author was partially supported by the Swedish Research Council Grant 2021-04561.}

\author[Eusebio Gardella]{Eusebio Gardella}
\address{Eusebio Gardella
Department of Mathematical Sciences, Chalmers University of
Technology and University of Gothenburg, Gothenburg SE-412 96, Sweden.}
\email{gardella@chalmers.se}
\urladdr{www.math.chalmers.se/~gardella}

\author{Siri Tinghammar}
\address{Siri Tinghammar,
Department of Mathematical Sciences, Chalmers University of
Technology and University of Gothenburg, Gothenburg SE-412 96, Sweden.}
\email{siriti@chalmers.se}
\begin{document}

\begin{abstract}
We continue the study of $L^p$-operator algebras associated with directed graphs initiated
by Corti\~nas and Rodr\'iguez, and we establish $L^p$-analogs of both the gauge-invariant and the Cuntz-Krieger uniqueness theorems. The first of these asserts that for a graph $Q$, a gauge-equivariant spatial representation of its Leavitt path algebra $L_Q$ on an $L^p$-space generates an injective representation whenever the idempotents associated to the vertices of $Q$ are nonzero. The second of these theorems states that, in the setting just described, the same conclusion holds if gauge-equivariance is replaced by the 
assumption that every cycle in $Q$ has an entry. Additionally, we show that for acyclic graphs, such representations are automatically isometric.

While our general approach is inspired by the proofs in the 
C*-algebra setting, a careful analysis of spatial representations of graphs on 
$L^p$-spaces is required. In particular, we exploit the interplay between analytical properties of Banach algebras, such as the role of hermitian elements, and geometric notions specific to 
$L^p$-spaces, such as spatial implementation.
\end{abstract}

\maketitle

\tableofcontents

\renewcommand*{\thetheoremintro}{\Alph{theoremintro}}
\section{Introduction}

Graph C*-algebras provide a natural framework for encoding the combinatorial structure of a directed graph into a C*-algebra. Given a directed graph $Q$, the associated graph C*-algebra $C^*(Q)$ is defined as the universal C*-algebra generated by partial isometries corresponding to edges and projections corresponding to vertices, satisfying relations that reflect the connectivity of the graph; see \cite{raeburn} for an introductory text. Graph algebras generalize both Cuntz-Krieger algebras and the Toeplitz-Cuntz algebra and appear naturally in various fields, including operator algebras, symbolic dynamics, and noncommutative geometry.
Graph C*-algebras have played an important role in the classification program for nuclear C*-algebras, particularly in the study of purely infinite algebras; see \cite{spielberg1, spielberg2, szymanski}. Their connections to groupoid algebras and dynamical systems have led to significant progress in understanding their structure and ideal lattice. 
A major breakthrough in the field was the completion, around a decade ago, of the classification of unital graph C*-algebras by filtered $K$-theory in \cite{eilers}, also providing a remarkable geometric classification in terms of moves on the graphs.
Current research focuses on studying certain intermediate equivalence relations (considering Cartan-preserving isomorphisms, or $\mathbb{T}$-equivariant ones, among others) and their connections to symbolic dynamics; see \cite{AER,matsumoto} and the recent survey \cite{brix}.

A fundamental question in the study of graph C*-algebras is the extent to which their algebraic structure (uniquely) determines the representations. One may naively expect that the relations defining $C^*(Q)$ are ``unique'', and that any family of operators satisfying the relations defining $C^*(Q)$ always generates an isomorphic copy of $C^*(Q)$. While this is true
for certain graphs (most prominently, the graphs giving rise to the Cuntz algebras $\mathcal{O}_n$), in general a representation of $C^*(Q)$ can fail to be faithful if it does not adequately capture the structure of paths in the graph. This issue is closely related to the ideal structure of $C^*(Q)$ and the possibility of modding out parts of the algebra corresponding to ``non-essential'' portions of the graph.

Two classical results play a central role in addressing this issue: the Cuntz–Krieger uniqueness theorem and the gauge-invariant uniqueness theorem. These provide necessary and sufficient conditions for a representation of $C^*(Q)$ to be faithful, under certain assumptions on the graph $Q$ and on the representation, respectively. More concretely, the former states that if every cycle in $Q$ has an entry, then any representation that does not annihilate any vertex projection is automatically injective. On the other hand, the latter gives a complementary criterion, ensuring faithfulness for representations that are compatible with the canonical gauge action.
Together, these theorems show how the combinatorial and dynamical properties of the graph are reflected in the (gauge-invariant) ideal structure of the associated C*-algebra.

A natural generalization of graph C*-algebras are the $L^p$-operator algebras associated to graphs, where Hilbert spaces as the natural ambient for Cuntz-Krieger families are replaced with $L^p$-spaces. These algebras were introduced by Corti\~nas-Rodriguez in \cite{cortinas rodriguez}, motivated by earlier work of Phillips on $L^p$-analogs of the Cuntz algebras \cite{phillips}, and accompanying a growing body of literature on the topic of algebras of operators on $L^p$-spaces which includes $L^p$-versions of group algebras \cite{gardella thiel,
GT_isomorphisms, GT_Lq}; AF-algebras \cite{phillips viola, GL_UHG}; (twisted) groupoid algebras \cite{choi gardella thiel, GL, BKM, HO}, and more. 
These $L^p$-versions exhibit several structural differences with their C*-analogs, for example lacking an involution, while still retaining aspects of their combinatorial or algebraic origins. 
Investigating $L^p$-versions of the Cuntz-Krieger uniqueness theorem is particularly interesting 
because faithfulness criteria for representations in this broader context remain largely unexplored. Moreover, 
establishing such results could provide new insights into the interplay between analytic properties of $L^p$-operator algebras arising from geometric or combinatorial data, and the algebraic or dynamical features of the underlying models. 

In this work, we prove $L^p$-versions of the classical gauge-invariant and Cuntz-Krieger uniqueness theorems for 
graph C*-algebras:

\begin{thmintro}(Gauge-invariant uniqueness).
    Let $p\in [1,\infty)$, let $Q$ be a graph, let $A$ be a unitizable $L^p$-operator algebra, and let $(S,T,E)$ be a Cuntz-Krieger $Q$-family in $A$ such that $E_v\neq 0$ for all $v\in Q^0$. Suppose that there is an isometric action $\beta\colon \mathbb{T}\rightarrow \text{Aut}(A)$ such that $\beta_z(S_a)=zS_a,\ \beta_z(T_a)=\overline{z}T_a$ and $\beta_z(E_v)=E_v$ for all $z\in\mathbb{T}$, for all $a\in Q^1$ and for all $v\in Q^0$. Then the canonical homomorphism $\pi\colon \mathcal{O}^p(Q)\rightarrow A$ is injective.
\end{thmintro}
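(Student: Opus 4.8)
The plan is to mimic the classical proof via a faithful conditional expectation onto the fixed-point algebra, the one genuinely new point being that the $C^*$-algebraic step ``$\Phi(x^{*}x)=0\Rightarrow x=0$'' is unavailable — $A$ has no involution — and must be replaced by a Fourier-analytic argument using compressions by the spatial partial isometries coming from the edges. Throughout, write $p_v,s_a,t_a$ ($v\in Q^0$, $a\in Q^1$) for the canonical generators of $\mathcal{O}^p(Q)$, so that $\pi(p_v)=E_v$, $\pi(s_a)=S_a$, $\pi(t_a)=T_a$, and write $s_\mu,t_\mu$ for the corresponding products along a finite path $\mu$. Since $\beta$ is isometric, $\Phi(a):=\int_{\mathbb T}\beta_z(a)\,dz$ defines a contractive idempotent onto $A^{\beta}$ which is a bimodule map over it; the canonical gauge action $\gamma$ on $\mathcal{O}^p(Q)$ yields in the same way a contractive idempotent $\Phi_0$ onto the core $\mathcal F:=\mathcal{O}^p(Q)^{\gamma}$. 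As $\pi$ is contractive (by the construction of $\mathcal{O}^p(Q)$, i.e.\ spatiality of the family) and sends generators to $(S,T,E)$, it intertwines $\gamma$ and $\beta$, and integrating gives $\Phi\circ\pi=\pi\circ\Phi_0$.

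Next I would show that $\pi$ is injective on the core. The core is a direct limit $\dirlim\mathcal F_n$ of subalgebras, each a direct sum of simple $L^{p}$-operator algebras — spatial $L^{p}$-matrix algebras $M_k^{p}$, together with copies of $\mathcal K(\ell^{p})$ for infinite graphs — with matrix units assembled from the monomials $s_\mu t_\nu$ with $|\mu|=|\nu|$. A nonzero bounded homomorphism out of such a summand is automatically injective, since these algebras have no nontrivial closed ideals; and $\pi$ is nonzero on the summand containing the idempotent $e=s_\mu t_\mu$ because $\pi(t_\mu)\,\pi(e)\,\pi(s_\mu)=\pi(t_\mu s_\mu t_\mu s_\mu)=\pi(p_{s(\mu)})=E_{s(\mu)}\neq0$, using the Cuntz-Krieger relations and the hypothesis $E_v\neq0$. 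Running over all summands and all $n$ shows $\pi|_{\mathcal F}$ is injective; this is the step where $E_v\neq0$ enters.

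I would then establish a faithfulness statement for $\Phi_0$: if $x\in\mathcal{O}^p(Q)$ satisfies $\Phi_0(t_\mu x s_\nu)=0$ for all finite paths $\mu,\nu$ (empty ones included), then $x=0$. By Fej\'er summation for the isometric action $\gamma$, the Ces\`aro means of the Fourier decomposition $\sum_n\Phi_n(x)$, where $\Phi_n(x):=\int_{\mathbb T}z^{-n}\gamma_z(x)\,dz$, converge to $x$, so it suffices to kill each $\Phi_n(x)$; and on the dense span of the monomials $s_\mu t_\nu$ one verifies the identities $\Phi_n(x)=\sum_{|\lambda|=n}s_\lambda\Phi_0(t_\lambda x)$ and $\Phi_{-n}(x)=\sum_{|\lambda|=n}\Phi_0(x s_\lambda)t_\lambda$ for $n\geq0$, which extend by continuity since both sides are bounded in $x$. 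The hypothesis — via $\nu$ empty, $\mu$ empty, and $\mu=\nu$ empty respectively — then forces every $\Phi_n(x)=0$, hence $x=0$. Combining the three steps: if $\pi(x)=0$ then $\pi(t_\mu x s_\nu)=0$, so $\pi(\Phi_0(t_\mu x s_\nu))=\Phi(\pi(t_\mu x s_\nu))=0$; since $\Phi_0(t_\mu x s_\nu)\in\mathcal F$ and $\pi|_{\mathcal F}$ is injective, $\Phi_0(t_\mu x s_\nu)=0$ for all $\mu,\nu$, and the faithfulness statement yields $x=0$.

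I expect the faithfulness step to be the main obstacle: one must check that the core-valued ``coordinates'' $\Phi_0(t_\lambda x)$ and $\Phi_0(x s_\lambda)$ genuinely reconstruct $x$ in the $L^{p}$ setting, which needs a careful analysis of the monomial calculus in $\mathcal{O}^p(Q)$ — notably the behaviour of the Cuntz-Krieger relation at regular vertices versus sinks and infinite emitters — together with boundedness of the maps involved, boundedness being something one cannot take for granted for homomorphisms of $L^{p}$-operator algebras. A secondary, less delicate point is the identification of $\mathcal F$ as a direct limit of simple $L^{p}$-operator algebras, which rests on the structure theory of spatial $L^{p}$-matrix algebras developed earlier.
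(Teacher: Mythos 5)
Your overall architecture (intertwine the two circle actions, prove injectivity on the core, then recover $x$ from the core-valued data $\Phi_0(t_\mu x s_\nu)$) is a legitimate alternative to the paper's second half, which instead proves that $\pi$ is \emph{isometric} on every spectral subspace (\autoref{isometryspectral}, via a no-sinks trick) and then establishes $\|\pi(\Phi_n(a))\|\leq\|\pi(a)\|$. However, there is a genuine gap at the step ``running over all summands and all $n$ shows $\pi|_{\mathcal F}$ is injective''. In the Banach setting, injectivity of a contractive homomorphism on each floor of an increasing union of subalgebras does \emph{not} pass to the closure of the union; in the C*-case this passage is silently powered by the automatic upgrade from injective to isometric, and supplying that upgrade for $p\neq 2$ is precisely the machinery the paper has to build: the floors are \emph{spatial} $L^p$-AF-algebras, such algebras are $p$-incompressible (\autoref{afincompressible}), hence injectivity on each floor gives isometry on each floor and therefore on $\overline{\bigcup_k\mathcal F_k}=\mathcal{O}^p(Q)^\gamma$ (\autoref{isocore}). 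You gesture at ``structure theory of spatial matrix algebras'' only in your closing remarks, but without this isometric bridge the deduction fails as stated. Relatedly, your description of the core is only correct for regular graphs without infinite emitters: at sinks and singular vertices the algebras $\mathcal F_k$ are not nested (CK2 is unavailable there), and at infinite emitters the decomposition $\mathcal F_k\cong\bigoplus_v\mathcal F_k(v)$ of \autoref{fkdirectsum} breaks down because the would-be units $\sum_\alpha s_\alpha t_\alpha$ are infinite sums. The paper purchases the clean picture by first writing $\mathcal{O}^p(Q)$ as a direct limit over countable Cuntz--Krieger subgraphs (\autoref{cor:DirLim}) and then desingularizing (\autoref{desingularize}); you either need these reductions or a proof of the core structure (including the $\mathcal{K}(\ell^p)$-type summands and an infinite direct-sum decomposition) in full generality. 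Your use of $E_v\neq 0$ to see that $\pi$ is nonzero on each simple summand is fine.

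The faithfulness lemma is where you genuinely diverge from the paper, and the idea is sound, but the justification ``both sides are bounded in $x$, so the identities extend by continuity'' does not parse: the right-hand side $\sum_{|\lambda|=n}s_\lambda\Phi_0(t_\lambda x)$ is an infinite series, not a priori defined (let alone continuous) on all of $\mathcal{O}^p(Q)$, so validity on the dense span of monomials does not allow you to evaluate the formula at a general $x$ with $\Phi_0(t_\lambda x)=0$. The repair is to observe that $\Phi_0(t_\lambda x)=t_\lambda\Phi_n(x)$ for $|\lambda|=n\geq 0$ (and $\Phi_0(xs_\lambda)=\Phi_{-n}(x)s_\lambda$), so the finite partial sums equal $\bigl(\sum_{\lambda\in F}s_\lambda t_\lambda\bigr)\Phi_n(x)$ with $\bigl\|\sum_{\lambda\in F}s_\lambda t_\lambda\bigr\|\leq 1$, and these contractions act as an approximate identity on $\mathcal{O}^p(Q)^n=\overline{\mathrm{span}}\{s_\alpha t_\beta\colon|\alpha|-|\beta|=n\}$ by \autoref{corephi}; hence $\Phi_0(t_\lambda x)=0$ for all $|\lambda|=n$ really does force $\Phi_n(x)=0$, and your Fej\'er argument (the paper's \autoref{phindef}(4) in different clothing) then gives $x=0$. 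With these two repairs your route works, and it is attractive in that it needs only injectivity (not isometry on all spectral subspaces) and avoids the no-sinks path-lengthening trick; but the first repair is exactly the paper's main technical input and cannot be skipped.
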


When every cycle in $Q$ has an entry, injectivity of the canonical homomorphisms
follows automatically without assuming gauge-equivariance. 

\begin{thmintro} (Cuntz-Krieger uniqueness).
Let $p\in [1,\infty)$, let $Q$ be a graph in which each cycle has an entry, let $A$ be a unitizable $L^p$-operator algebra, and let $(S,T,E)$ be a Cuntz-Krieger $Q$-family in $A$ such that $E_v\neq 0$ for all $v\in Q^0$. Then the canonical homomorphism $\pi\colon \mathcal{O}^p(Q)\rightarrow A$ is injective.
\end{thmintro}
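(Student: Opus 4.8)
The plan is to deduce injectivity of $\pi$ from the combinatorial fact that \emph{every nonzero closed two-sided ideal $J$ of $\mathcal{O}^p(Q)$ contains a vertex idempotent $E_v$}: since $\pi(E_v)\neq 0$ for all $v$ by hypothesis, this forces $\ker\pi = 0$. To produce such an $E_v$ from a given $0\neq a\in J$, I would work with the canonical gauge action $\gamma\colon\mathbb{T}\to\mathrm{Aut}(\mathcal{O}^p(Q))$ on the universal object, which is isometric, and with the contractive idempotent conditional expectation $\Phi\colon\mathcal{O}^p(Q)\to\mathcal{F}^p(Q)$ onto its fixed-point algebra (the ``core''), obtained by averaging $\gamma$ over $\mathbb{T}$; here $\mathcal{F}^p(Q)$ is the closed span of the words $S_\mu T_\nu$ with $|\mu|=|\nu|$ and $r(\mu)=r(\nu)$, an $L^p$-AF algebra realised as a direct limit of $L^p$-matrix algebras $\bigoplus_v M^p_{k_{N,v}}$, and the diagonal idempotents $S_\mu T_\mu$ are among its matrix units. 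In particular, once we know that $J\cap\mathcal{F}^p(Q)\neq 0$, the structure theory of $L^p$-AF algebras provides a diagonal idempotent $S_\mu T_\mu\in J\cap\mathcal{F}^p(Q)$, and then $E_{r(\mu)}=T_\mu(S_\mu T_\mu)S_\mu\in J$ since $J$ is an ideal. So the entire problem reduces to showing that, under the hypothesis that every cycle in $Q$ has an entry, every nonzero closed ideal of $\mathcal{O}^p(Q)$ meets the core nontrivially.

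To prove this last statement I would proceed in two stages. First, using a faithful spatial representation of $\mathcal{O}^p(Q)$ on an $L^p$-space, a nonzero $a\in J$ has a nonzero ``matrix coefficient'' that can be routed onto the core: compressing $a$ by appropriate words $V_0,W_0$ in the generators produces $0\neq a'=V_0aW_0\in J$ with $\Phi(a')\neq 0$, and approximating $\Phi(a')$ by an element $b$ of a finite stage $\mathcal{F}^p_N(Q)$, with $\|\Phi(a')-b\|$ as small as desired, keeps $b\neq 0$. Second — and this is where the hypothesis on cycles enters — one invokes a combinatorial lemma exploiting the entries to the cycles of $Q$: these entries provide enough room to choose homogeneous partial isometries $V=\sum_j T_{\beta_j}$ and $W=\sum_i S_{\alpha_i}$, built from suitably long paths chosen so as to break the periodicity that a cycle without entry would impose, with the property that $VbW=\lambda E_v$ for a single vertex $v$ and a nonzero scalar $\lambda$, while at the same time $V(a'-\Phi(a'))W$ is forced to be small (its components of nonzero $\gamma$-degree are compressed off the diagonal, with an explicit bound using $\|S_\mu\|,\|T_\nu\|\le 1$). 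Combining, $Va'W$ lies within distance $<|\lambda|$ of $\lambda E_v$ inside the unital $L^p$-operator algebra $E_v\mathcal{O}^p(Q)E_v$, whose unit $E_v$ has norm $1$; a Neumann series argument, valid in any unital Banach algebra, then shows that $\lambda^{-1}Va'W$ is invertible there, so $E_v$ lies in the ideal it generates, which is contained in $J$.

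The main obstacle, and the place where the argument genuinely parts ways with the C*-proof, is twofold. There is no involution and no positive-element functional calculus, so the usual device of replacing $a$ by $a^*a$ and invoking faithfulness of $\Phi$ on positive elements is unavailable; it must be replaced by an argument with general elements that exploits that $S_a,T_a$ are \emph{spatial} partial isometries (in particular contractive and spatially implemented), that the $E_v$ and the diagonal idempotents are hermitian, and the explicit $L^p$-AF description of $\mathcal{F}^p(Q)$ — this is precisely where the interplay between hermitian elements and spatial implementation advertised in the abstract is used, to secure the non-degeneracy of $\Phi$ needed to locate the nonzero core coefficient $\Phi(a')$. Secondly, the compression lemma must be made quantitatively precise in the Banach-algebra setting: one needs a careful bookkeeping of path lengths and of the norms of the various contractions in order to guarantee simultaneously that $VbW$ is an honest nonzero multiple of $E_v$ and that $\|V(a'-\Phi(a'))W\|$ can be made strictly smaller than $|\lambda|$. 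Once these two points are in place, the reduction described above assembles into the full statement.
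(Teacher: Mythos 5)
Your reduction of injectivity to the statement ``every nonzero closed ideal of $\mathcal{O}^p(Q)$ contains a vertex idempotent'' is legitimate (since $\ker\pi$ is a closed two-sided ideal and $\pi(e_v)=E_v\neq 0$), and your first stage (producing $a'\in J$ with $\Phi_0(a')\neq 0$ by degree-shifting, which ultimately rests on the joint faithfulness of the maps $\Phi_n$ as in \autoref{phindef}) is fine. The genuine gap is in the second stage, exactly at the point you yourself flag as the crux: you need $VbW=\lambda E_v$ with the total error (coming from $\|a'-b\|$ and the compressed nonzero-degree part) strictly smaller than $|\lambda|$, and you offer no mechanism to achieve this. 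Without an involution and positivity, the scalar $\lambda$ you can extract is just some matrix coefficient of the finite-stage approximant $b$ of $\Phi_0(a')$, and in $M_n^p$ the largest coefficient of an element can be as small as $\|b\|/n$ (consider $n^{-1}$ times the all-ones matrix). The size $n$ of the relevant block grows as you refine the finite-sum approximation of $a'$, so the two requirements ``make the approximation error small'' and ``keep $|\lambda|$ bounded below'' pull against each other, and the sketch contains no way to break this circularity. In the C*-proof this is broken by replacing $a$ with $a^*a$ and compressing by a spectral projection of the positive element $\Phi(a^*a)$, which yields $|\lambda|\approx\|\Phi(a^*a)\|$ \emph{independently of the block size}; no substitute for this is provided, and inventing one is essentially the hard part of the theorem. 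A secondary unsupported step is your appeal to ``the structure theory of $L^p$-AF algebras'' to get a diagonal idempotent from $J\cap\mathcal{O}^p(Q)^\gamma\neq 0$: the C*-argument that a nonzero closed ideal of an AF algebra meets a finite stage uses that injective $*$-homomorphisms are isometric, and this fails for Banach quotients of $L^p$-operator algebras (quotients need not even be $L^p$-operator algebras), so this is not available off the shelf. You also do not address the reductions (to countable graphs via \autoref{cor:DirLim}, and desingularization via \autoref{desingularize}) that are needed before the core can be treated as a spatial $L^p$-AF algebra.

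For comparison, the paper avoids scalar extraction altogether: it proves the norm estimate $\|\pi(\Phi_0(a))\|\leq\|\pi(a)\|$ by compressing with $V=\sum_{\tau}S_{\tau\lambda}T_{\tau\lambda}$, where $\lambda$ is a long nonreturning path obtained from the entry condition. This $V$ kills the terms $S_\alpha T_\beta$ with $|\alpha|\neq|\beta|$, but on the degree-zero part it carries an \emph{entire} matrix block $\mathrm{span}\{s_\alpha t_\beta\colon\alpha,\beta\in G\}$ onto the family of nonzero matrix units $\{S_{\alpha\lambda}T_{\beta\lambda}\}$, and Phillips' theorem via \autoref{matrixiso} and \autoref{afincompressible} shows this compression is isometric on the block — so the full norm of $\Phi_0(a)$, not a single entry, survives. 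Injectivity then follows by combining this estimate with the fact that $\pi$ is isometric on every spectral subspace (\autoref{isometryspectral}, which uses the spatial AF structure of the core and $E_v\neq 0$) and the faithfulness of the family $(\Phi_n)$. If you want to salvage your ideal-theoretic route, you would need to replace the single-entry compression by this block-isometry mechanism (or by the diagonal/C*-core methods of the groupoid approach mentioned in the introduction); as written, the argument does not close.
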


We largely follow the general arguments used for graph C*-algebras (for example, as 
presented in \cite{raeburn}), but the absence of an involution in $\mathcal{O}^p(Q)$ forces us 
to carry out a careful analysis of spatial representations of graphs on 
$L^p$-spaces. In doing this, the interplay between analytical properties of Banach algebras, such as the role of hermitian elements, and geometric notions specific to 
$L^p$-spaces, such as spatial implementation, plays a key role. 

An alternative proof of our Theorem~B can be obtained by adapting
the methods used for graph C*-algebras in \cite{kumjian pask raeburn},
using the recent results of Bardadyn, Kwa\'sniewski and McKee on 
Banach algebras associated to groupoids from \cite{BKM}, and specifically using injectivity
on the diagonal subalgebra (which, in our setting, amounts to looking at the C*-core of $\mathcal{O}^p(Q)$; see \cite{choi gardella thiel}). The proof that we present here is 
graph-theoretical, as it avoids the use of groupoids or C*-cores. Although the results in 
\cite{BKM} would allow us to prove Theorem~B in a more direct way, the machinery we develop
here is likely to be useful in other contexts, and it allows us to prove additional results, such as Theorems~C and~D below.

An important tool in our work is the existence of a distinguished circle action $\gamma\colon\mathbb{T}\to\mathrm{Aut}(\mathcal{O}^p(Q))$, called the gauge action.
Showing injectivity of the restriction of the canonical 
homomorphisms to the fixed point 
algebra of this action is one of the main steps in showing injectivity, and for this 
the following result is very helpful.

\begin{thmintro}
Let $p\in [1,\infty)$ and let $Q$ be a graph without infinite emitters. 
Then the fixed point algebra
$\mathcal{O}^p(Q)^\gamma$ of the gauge action is a spatial $L^p$-AF-algebra. 
\end{thmintro}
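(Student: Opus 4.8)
The plan is to adapt the proof from the graph C*-algebra setting (as presented in \cite{raeburn}), writing $\mathcal{O}^p(Q)^\gamma$ as an increasing union of building blocks that are $c_0$-direct sums of spatial $L^p$-matrix algebras $M_k^p$ (the $k\times k$ matrices acting on $\ell^p_k$) and of algebras of $L^p$-compact operators; since such algebras, their $c_0$-sums, and their inductive limits along spatial maps are all spatial $L^p$-AF-algebras, this will give the claim. Throughout I would work with $\mathcal{O}^p(Q)$ in its canonical spatial representation, so that each edge generator $S_a$ is a spatial partial isometry and the vertex idempotents $E_v$ are mutually orthogonal spatial idempotents supported on pairwise disjoint parts of the underlying measure space.

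First I would identify the fixed point algebra as a closed span. Since $\gamma$ is an isometric, point-norm continuous circle action, integrating against normalized Haar measure on $\mathbb{T}$ produces a contractive idempotent linear map $\Phi\colon\mathcal{O}^p(Q)\to\mathcal{O}^p(Q)$, $\Phi(x)=\int_{\mathbb{T}}\gamma_z(x)\,dz$, whose range is the closed subalgebra $\mathcal{O}^p(Q)^\gamma$. Using that $\mathcal{O}^p(Q)$ is the closed linear span of the monomials $S_\mu T_\nu$ (with $\mu,\nu$ finite paths satisfying $r(\mu)=r(\nu)$) and that $\gamma_z(S_\mu T_\nu)=z^{|\mu|-|\nu|}S_\mu T_\nu$, the continuity of $\Phi$ yields
\[
\mathcal{O}^p(Q)^\gamma=\overline{\spn}\{\, S_\mu T_\nu : |\mu|=|\nu|,\ r(\mu)=r(\nu) \,\}.
\]

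Next I would introduce the building blocks. Following Raeburn, for $n\ge 0$ let $W_n=\{\mu:|\mu|=n\}\cup\{\mu:|\mu|<n,\ r(\mu)\text{ a sink}\}$, put $W_n(v)=\{\mu\in W_n:r(\mu)=v\}$, and set $\mathcal{F}_n=\overline{\spn}\{S_\mu T_\nu:\mu,\nu\in W_n,\ r(\mu)=r(\nu)\}$. The Cuntz--Krieger relations give $T_\nu S_\mu=\delta_{\mu\nu}E_{r(\mu)}$ for $\mu,\nu\in W_n$ with $r(\mu)=r(\nu)$, so for each $v$ the family $\{S_\mu T_\nu\}_{\mu,\nu\in W_n(v)}$ satisfies matrix-unit relations and $\mathcal{F}_n$ is, as an algebra, the direct sum over $v$ of the resulting subalgebras. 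The key step is to upgrade this to an isometric statement: because $(S,T,E)$ is spatial, each $S_\mu$ is a spatial partial isometry, and --- using that $T_\nu$ is the spatial inverse of $S_\nu$ on the relevant ranges rather than an (unavailable) adjoint --- each $S_\mu T_\nu$ is a spatial partial isometry, so the matrix-unit systems above are spatially implemented. Invoking the description of spatially implemented matrix-unit systems on $L^p$-spaces established earlier, I would conclude that $\overline{\spn}\{S_\mu T_\nu\}_{\mu,\nu\in W_n(v)}$ is isometrically isomorphic to $M_{|W_n(v)|}^p$ (or to the algebra of $L^p$-compact operators on $\ell^p(W_n(v))$ when $W_n(v)$ is infinite), and --- using that the $E_v$ have pairwise disjoint supports --- that $\mathcal{F}_n$ is isometrically the corresponding $c_0$-direct sum over $v$; in particular each $\mathcal{F}_n$ is a spatial $L^p$-AF-algebra.

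Then I would pass to the limit. The relation $E_v=\sum_{s(e)=v}S_eT_e$ (an empty sum when $v$ is a sink) gives $S_\mu T_\nu=\sum_{s(e)=r(\mu)}S_{\mu e}T_{\nu e}$ when $r(\mu)$ is not a sink, while $S_\mu T_\nu$ is unchanged when $r(\mu)$ is a sink; this shows $\mathcal{F}_n\subseteq\mathcal{F}_{n+1}$, and that, read through the identifications just made, the inclusion is a $c_0$-direct sum of corner embeddings of the matrix (and compact-operator) blocks, with multiplicities dictated by the edges of $Q$ --- in particular it is block diagonal, hence spatial and isometric. Since every generator of $\mathcal{O}^p(Q)^\gamma$ from the first step lies in some $\mathcal{F}_n$, I would conclude
\[
\mathcal{O}^p(Q)^\gamma=\overline{\bigcup_n\mathcal{F}_n}=\dirlim(\mathcal{F}_n,\ \mathcal{F}_n\hookrightarrow\mathcal{F}_{n+1}),
\]
an inductive limit of spatial $L^p$-AF-algebras along spatial connecting maps, hence itself a spatial $L^p$-AF-algebra. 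The hard part, and the only place where genuine $L^p$-geometry is needed, is the isometric identification of the blocks: over a Hilbert space abstract matrix units always span a copy of $M_k$, but for $p\ne 2$ a system of matrix units on an $L^p$-space can generate a non-isometric --- indeed non-spatial --- representation of $M_k^p$, so one must really prove that the $S_\mu T_\nu$ form a spatially implemented system (this is where the analysis of hermitian elements and of spatial implementation enters) and control the norms of the resulting $c_0$-sums and corner embeddings; the remaining combinatorics is a routine transcription of Raeburn's argument.
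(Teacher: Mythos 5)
Your overall strategy is the same as the paper's: identify $\mathcal{O}^p(Q)^\gamma$ as the closed span of the gauge-degree-zero monomials by integrating the action (\autoref{phindef}, \autoref{corephi}); organize these monomials into vertex-indexed systems of matrix units; use the fact that the matrix units are spatial partial isometries together with Phillips' minimality of the spatial norm on $M_n^p$ to see each block isometrically (\autoref{matrixiso}); get the isometric block decomposition from pairwise orthogonal hermitian idempotents with disjoint supports (\autoref{directsum}, \autoref{fkdirectsum}); and pass to the direct limit along the inclusions given by CK2 (\autoref{isocore}). You also correctly isolate the only genuinely $L^p$-theoretic point (abstract matrix units need not span an isometric copy of $M_n^p$), and your remedy is the paper's. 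Where you go beyond the paper's own argument is in using Raeburn's $W_n$-device to absorb vertices at which CK2 is vacuous, and in allowing infinite blocks realized as $L^p$-compacts; the body of the paper (\autoref{isocore}) instead assumes the graph regular with no sinks, so that the plain $\mathcal{F}_k$ are nested and all blocks are finite matrix algebras.

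Two slips need repair, and one caveat on scope. First, as written $\mathcal{F}_n=\overline{\spn}\{S_\mu T_\nu\colon \mu,\nu\in W_n,\ r(\mu)=r(\nu)\}$ is \emph{not} contained in the fixed-point algebra: for $\mu,\nu\in W_n$ of different lengths (both ending at sinks) the monomial $S_\mu T_\nu$ has nonzero gauge degree (for instance $S_\mu T_{r(\mu)}=S_\mu$ when $r(\mu)$ is a sink), so your identity $\mathcal{O}^p(Q)^\gamma=\overline{\bigcup_n\mathcal{F}_n}$ fails; you must impose $|\mu|=|\nu|$, after which each vertex block splits further by path length and the rest of the argument goes through. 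Second, the isometric $c_0$-sum is not justified by disjointness of the supports of the $E_v$: with blocks indexed by the common terminal vertex of the paths, operators in different blocks generally act inside the supports of the same vertex idempotents; what one actually uses is that paths in $W_n$ are pairwise incomparable, so the range idempotents $S_\mu T_\mu$ have pairwise disjoint supports (this is \autoref{orthogonal}, the mechanism behind \autoref{fkdirectsum}). Finally, mind the paper's conventions: here CK2 reads $e_v=\sum_{a\in\ran^{-1}(v)}s_at_a$ and ``infinite emitter'' means $\sor^{-1}(v)$ infinite, so the stated hypothesis is exactly what makes each block a \emph{finite} matrix algebra, whereas the relation your nesting step needs at every non-vacuous vertex is available only when $\ran^{-1}(v)$ is finite and nonempty; your argument tacitly assumes this row-finiteness in the CK2-direction (in the paper's terms, no infinite receivers), and at a vertex where CK2 fails for an infinite reason the frozen paths of $W_n$ are comparable with longer ones and the matrix-unit structure breaks. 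The paper's body proof does not cover that case either (it assumes regularity outright), so this is a limitation you share with the paper rather than a defect relative to it, but you should state explicitly which finiteness hypothesis each step of your argument consumes.
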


We cannot conclude in general that the representation in Theorem~B is isometric, since for homomorphisms between $L^p$-operator algebras, injectivity does not in general imply isometry. On
the other hand, for graphs without cycles, this conclusion does indeed hold:

\begin{thmintro}
    Let $p\in [1,\infty)$, and let $Q$ be an acyclic graph. Let $(S,T,E)$ be a Cuntz-Krieger $Q$-family in a unitizable $L^p$-operator algebra $A$ such that $E_v\neq 0$ for all $v\in Q^0$. Then the canonical homomorphism $\pi\colon\mathcal{O}^p(Q)\rightarrow A$ is an isometry.
\end{thmintro}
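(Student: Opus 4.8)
The plan is to reduce the statement to a fact about spatial representations of finite-dimensional $L^p$-matrix algebras, namely the uniqueness of the spatial $L^p$-operator norm, and then to feed in the injectivity supplied by Theorem~B.

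First, since $A$ is a unitizable $L^p$-operator algebra it embeds isometrically into $\mathcal{B}(L^p(X,\mu))$ for some measure space $(X,\mu)$; write $\iota$ for this embedding and $\rho_0=\iota\circ\pi\colon\mathcal{O}^p(Q)\to\mathcal{B}(L^p(X,\mu))$. Since the $S_a,T_a$ are spatial partial isometries and the $E_v$ are hermitian idempotents --- and these notions do not depend on the ambient isometric representation --- the triple $(\iota(S),\iota(T),\iota(E))$ is a spatial Cuntz-Krieger $Q$-family, so $\rho_0$ is a spatial representation of $\mathcal{O}^p(Q)$. As $\iota$ is isometric, it suffices to prove that $\rho_0$ is isometric; and since the norm on $\mathcal{O}^p(Q)$ is by construction the supremum of the norms arising from spatial representations, $\rho_0$ is automatically contractive, so we only need $\|\rho_0(x)\|\geq\|x\|_{\mathcal{O}^p(Q)}$ for $x$ in the dense subalgebra $L_Q$. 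Finally, an acyclic graph vacuously satisfies ``each cycle has an entry'', so Theorem~B applies and $\pi$, hence $\rho_0$, is injective.

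Next, fix $x\in L_Q$, a finite linear combination of the standard generators $S_\mu T_\nu$ with $r(\mu)=r(\nu)$. Using that $Q$ is acyclic, one shows that $x$ lies in a finite-dimensional subalgebra $D\subseteq\mathcal{O}^p(Q)$ which is a spatial $L^p$-matrix algebra, that is, spatially isomorphic to a finite direct sum $\bigoplus_j M_{k_j}^p$: one takes $D$ to be generated by the finitely many $S_a,T_a,E_v$ appearing in $x$ together with the finitely many remainder idempotents $E_v-\sum_{a\in F}S_aT_a$, where $F$ is the relevant finite set of edges emitted by an infinite emitter $v$, and checks --- exactly as for the C*-core in \cite{raeburn} --- that this yields a finite-dimensional algebra whose minimal matrix units are products of the $S_a,T_a$ (hence spatial partial isometries) and whose summand units are hermitian idempotents. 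Acyclicity is what makes this bookkeeping terminate after finitely many steps, and infinite emitters cause no difficulty since only finitely many edges at $v$ together with the corresponding remainder idempotent are ever needed. Now write $\|\cdot\|_D$ for the unique spatial $L^p$-operator norm on $\bigoplus_j M_{k_j}^p$. Any spatial representation $\rho$ of $\mathcal{O}^p(Q)$ restricts on $D$ to a spatial representation of $\bigoplus_j M_{k_j}^p$; since a nonzero spatial representation of $M_k^p$ is spatially implemented and hence isometric, $\rho|_D$ is isometric on each summand on which it is nonzero, and in particular contractive for $\|\cdot\|_D$, so taking the supremum over $\rho$ gives $\|x\|_{\mathcal{O}^p(Q)}\leq\|x\|_D$. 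On the other hand $\rho_0$ is injective, so it is nonzero on every minimal central idempotent of $D$, hence nonzero on every matrix summand and therefore isometric on each; since the summand units are hermitian idempotents they split $L^p(X,\mu)$ as an $\ell^p$-direct sum, so $\rho_0|_D$ acts block-diagonally and is isometric for $\|\cdot\|_D$. Therefore $\|\rho_0(x)\|=\|x\|_D\geq\|x\|_{\mathcal{O}^p(Q)}$, which is the required inequality; as $x$ ranges over the dense subalgebra $L_Q$, this shows $\rho_0$, and hence $\pi$, is isometric.

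The main obstacle is the claim that the finitely generated subalgebra $D$ really is a spatial $L^p$-matrix algebra. Making this precise requires a careful analysis of how the spatial partial isometries $S_\mu,T_\nu$ multiply inside $\mathcal{O}^p(Q)$: that the relevant products are again spatial partial isometries, that the hermitian idempotents appearing genuinely decompose the ambient $L^p$-space as $\ell^p$-direct sums (so that uniqueness of the spatial matrix norm may be applied to each block), and that the procedure closes up in finitely many steps for acyclic $Q$ even with infinite emitters present. An alternative, and perhaps cleaner, route is to establish first --- along the lines of the proof of Theorem~C, but now also allowing infinite emitters, which is harmless when $Q$ is acyclic --- that $\mathcal{O}^p(Q)$ is itself a spatial $L^p$-AF-algebra, and then to invoke the general principle that an injective spatial representation of a spatial $L^p$-AF-algebra is automatically isometric; this reduces at each finite stage to exactly the statement about matrix algebras used above.
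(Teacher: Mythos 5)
Your proposal is correct and follows essentially the same route as the paper: injectivity is obtained from the Cuntz--Krieger uniqueness theorem (\autoref{injectivity}), vacuously applicable since $Q$ is acyclic, and the upgrade to isometry rests on the spatial finite-dimensional structure of acyclic graph algebras together with uniqueness of the spatial matrix norm. Indeed, your ``alternative, cleaner route'' is verbatim the paper's proof (\autoref{acyclicaf} combined with \autoref{afincompressible}), while your primary element-by-element argument is the same proof unfolded locally, and the obstacle you flag --- that the finite-dimensional subalgebra $D$ is isometrically a direct sum of spatial matrix algebras --- is exactly the content the paper supplies in \autoref{finitegraphembedding}, \autoref{finacyclicsemisimple} and \autoref{matrixiso}.
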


We work with not-necessarily countable graphs throughout, and thus we spend some time extending the basic machinery developed in \cite{cortinas rodriguez} for the separable setting. Most arguments can be obtained with minor modifications, for example by replacing $\sigma$-finite measures with
localizable ones and using the version of Lamperti's theorem for such measures from \cite{choi gardella thiel}. There is only one place where
countability of a graph is crucial, namely the disingularization argument in
\autoref{desingularize}. To circumvent this, we will show that the $L^p$-graph algebra of an arbitrary graph can be
written as a direct limit, with isometric homomorphisms, of the $L^p$-graph algebras of certain countable Cuntz-Krieger subgraphs; see \autoref{cor:DirLim}.

\vspace{.2cm}

\textbf{Acknowledgements:} We would like to thank Jan Gundelach for a number of helpful discussions, as well as Krzysztof Bardadyn and Bartosz Kwa\'sniewski for making us aware of the connections of our work with theirs \cite{BKM}, and indicating to us an alternative way to prove Theorem~B. 

\section{Preliminaries on partial isometries and idempotents}

In this section, we collect a number of definitions and results that will be used in later sections.
We begin by setting some terminology around $L^p$-operator algebras, and by introducing certain elements which will be of particular interest, namely spatial partial isometries and hermitian idempotents. As we will see in the end of this section, spatial partial isometries and hermitian idempotents are closely connected.

\begin{df} (\cite[Definition 2.2]{phillips viola})
    Let $A$ be a Banach algebra and let $p\in[1,\infty)$. We say that $A$ is an \emph{$L^p$-operator algebra} if there exist an $L^p$-space $\mathcal{E}$ and an isometric homomorphism $A\rightarrow\mathcal{B}(\mathcal{E})$ of complex algebras. A \emph{representation} of $A$ on an $L^p$-space $\mathcal{E}$ is a contractive homomorphism $\varphi\colon A\rightarrow\mathcal{B}(\mathcal{E})$. The representation $\varphi$ is said to be \emph{nondegenerate} if $\text{span}\{\varphi(a)\xi\colon a\in A,\ \xi \in \mathcal{E}\}$ is dense in $\mathcal{E}$.
\end{df}

When $A$ is unital, a representation is nondegenerate if and only if it is unital, and one can always create a unital representation from a nonunital one, as shown in the next lemma. 
Given a Banach space $X$ and an idempotent $q\in \mathcal{B}(X)$, 
we write $\crn_q\colon \mathcal{B}(X)\rightarrow\mathcal{B}(qX)$ for the compression map, 
which is given by $\crn_q(a)=qaq$ for all $a\in\mathcal{B}(X)$.

\begin{lma}\label{unitalrep}
    Let $p\in [1,\infty)$, let $A$ be a unital $L^p$-operator algebra and let $\varphi\colon A\to \mathcal{B}(\mathcal{E})$ be a representation on an $L^p$-space $\mathcal{E}$. Set $q=\varphi(1)$. Then $q\mathcal{E}$ is an $L^p$-space and $\crn_q\circ\varphi$ is a unital representation of $A$ satisfying $\|\crn_q(\varphi(a))\|=\|\varphi(a)\|$ for all $a\in A$.
\end{lma}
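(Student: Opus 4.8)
The plan is to show that $q:=\varphi(1)$ is a \emph{contractive idempotent} on $\mathcal{E}$, invoke the classical structure theorem for ranges of contractive projections on $L^p$-spaces to conclude that $q\mathcal{E}$ is again an $L^p$-space, and then verify, essentially formally, that compression by $q$ turns $\varphi$ into a unital representation with the stated norm property.

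First I would note that $q^2=\varphi(1)^2=\varphi(1)=q$, so $q$ is idempotent, and $\|q\|=\|\varphi(1)\|\le\|1\|=1$ because $\varphi$ is contractive; hence $q$ is a contractive idempotent (in fact a norm-one projection, unless it is $0$). That $q\mathcal{E}$ is an $L^p$-space is then precisely the statement that the range of a contractive projection on an $L^p$-space is isometrically an $L^p$-space, a classical fact underlying this whole theory (see e.g.\ \cite{phillips, cortinas rodriguez}). This is the only substantive input; everything below is bookkeeping.

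Next, since $1$ is the unit of $A$ and $\varphi$ is multiplicative, $q\varphi(a)=\varphi(1)\varphi(a)=\varphi(a)=\varphi(a)\varphi(1)=\varphi(a)q$ for all $a\in A$, i.e.\ $q$ is a two-sided identity for the subalgebra $\varphi(A)\subseteq\mathcal{B}(\mathcal{E})$. In particular each $\varphi(a)$ preserves $q\mathcal{E}$, and $\crn_q(\varphi(a))=q\varphi(a)q$ coincides, as an operator on $q\mathcal{E}$, with the corestriction $\varphi(a)|_{q\mathcal{E}}$. Using $q\varphi(a)q=\varphi(a)$ one computes $\crn_q(\varphi(a))\crn_q(\varphi(b))=q\varphi(a)q\varphi(b)q=\varphi(a)\varphi(b)=\varphi(ab)=\crn_q(\varphi(ab))$, so $\crn_q\circ\varphi$ is a homomorphism $A\to\mathcal{B}(q\mathcal{E})$; it is unital because $\crn_q(\varphi(1))=q=\id_{q\mathcal{E}}$, and contractive because $\|\crn_q(\varphi(a))\|\le\|\varphi(a)\|\le\|a\|$, so it is indeed a representation.

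It remains to prove the norm identity. Since $\crn_q(\varphi(a))$ is a corestriction of $\varphi(a)$, clearly $\|\crn_q(\varphi(a))\|\le\|\varphi(a)\|$. Conversely, for any $\xi\in\mathcal{E}$ we have $\varphi(a)\xi=\varphi(a)(q\xi)$ with $q\xi\in q\mathcal{E}$ and $\|q\xi\|\le\|q\|\,\|\xi\|\le\|\xi\|$, whence $\|\varphi(a)\xi\|\le\|\crn_q(\varphi(a))\|\,\|\xi\|$ and therefore $\|\varphi(a)\|\le\|\crn_q(\varphi(a))\|$. Since the lemma is essentially formal once the contractive-projection theorem is quoted, I do not expect any real obstacle; the only point requiring care is confirming that $q$ genuinely is contractive, which is exactly where the hypothesis that $\varphi$ is a representation (rather than an arbitrary homomorphism) is used.
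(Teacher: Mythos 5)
Your proof is correct and follows essentially the same route as the paper: contractivity of $\varphi$ makes $q$ a contractive idempotent, the contractive-projection theorem (the paper cites Tzafriri) gives that $q\mathcal{E}$ is an $L^p$-space, and the identity $\varphi(a)=q\varphi(a)q$ yields both that $\crn_q\circ\varphi$ is a unital representation and that compression by $q$ is isometric on $\mathrm{Im}(\varphi)$. No issues.
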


\begin{proof}
Note that $\|q\|\leq 1$ by contractivity of $\varphi$. The fact that $q\mathcal{E}$ is an $L^p$-space follows from \cite[Theorem 6]{tzafriri}. It is clear that $\crn_q\circ\varphi$ is a unital representation of $A$.
Since $\varphi(a)=q\varphi(a)q$ for all $a\in A$, we deduce that $\crn_q$ is an isometry on $\text{Im}(\varphi)$, as desired.
\end{proof}

\begin{df}
    Let $A$ be a ring and let $e,f\in A$ be idempotents. We say that $e$ and $f$ are \emph{orthogonal} if $ef=fe=0$.
\end{df}

\begin{df}\label{partialisometry}
    Let $A$ be an algebra. We say that $s\in A$ is a \emph{partial isometry} if there exists an element $t\in A$, called a \emph{reverse} of $s$, such that $sts=s$ and $tst=t$. 
    
    If $s_1$ and $s_2$ are partial isometries, we say that they have \emph{orthogonal ranges} if there exist reverses $t_1$ and $t_2$ with $s_it_j=t_js_i=0$ for $i,j=1,2$ and $i\neq j$.
\end{df}

Note that if $s$ is a partial isometry with reverse $t$, then $st$ and $ts$ are idempotents.

Let $(X,\mathcal{B},\mu)$ be a localizable measure space (see, for example, \cite[Definition~2.2]{GarThi_isomorphisms} for the precise definition; see also \cite[Remark~2.5]{choi gardella thiel}). For $E\in\mathcal{B}$, we write $\mathcal{B}_E=\{E\cap F\colon F\in \mathcal{B}\}$, and we let $\mu_E$ denote the restriction of $\mu$ to $\mathcal{B}_E$. Then $(E,\mathcal{B}_E,\mu_E)$ is again a localizable measure space. Note that the map $L^p(\mu)\rightarrow L^p(\mu_E)\oplus L^p(\mu_{E^c})$ given by $f\mapsto (f|_E, f|_{E^c})$ is an isometric isomorphism.

Let $(X,\mathcal{B},\mu)$ and $(Y,\mathcal{C},\nu)$ be localizable measure spaces. A \emph{measurable set transformation} from $X$ to $Y$ is a homomorphism $\eta\colon\mathcal{B}\rightarrow\mathcal{C}$ of $\sigma$-algebras. If $\eta$ is bijective, then $\mu\circ\eta^{-1}$ is a localizable measure on $Y$, and we have $\mu\circ\eta^{-1}\ll\nu$. Hence the Radon-Nikodym theorem for localizable measures (\cite[Theorem 2.7]{GarThi_isomorphisms}) applies in the definition below. 

Write $\mathcal{B}(\mathbb{R})$ for the $\sigma$-algebra of Borel subsets of $\mathbb{R}$. If $f\colon (X,\mathcal{B})\rightarrow (\R,\mathcal{B}(\mathbb{R}))$ is a measurable function, we may identify $f$ with a $\sigma$-algebra homomorphism $\tilde{f}\colon\mathcal{B}(\R)\rightarrow \mathcal{B}$ (see \cite[Example 2.6]{gardella}). We write $f\circ\eta$ for the composition $\eta\circ\tilde{f}$, which may then be viewed as a measurable function $Y\rightarrow\R$, again
using \cite[Example 2.6]{gardella}.

\begin{df}(\cite[Definition 6.2]{gardella})\label{spatialpartialiso}
    Let $p\in[1,\infty)$ and let $(X,\mathcal{B},\mu)$ be a localizable measure space. Let $E,F\in\mathcal{B}$, let $\eta\colon \mathcal{B}_E\rightarrow\mathcal{B}_F$ be a bijective measureble set transformation, and let $f\in\mathcal{U}(L^\infty(\mu_F))$. We call the tuple $(E,F,\eta,f)$ a \emph{spatial system}. The \emph{spatial partial isometry} associated to the spatial system $(E,F,\eta,f)$ is the contractive map $s\colon L^p(\mu)\rightarrow L^p(\mu)$ which vanishes on $L^p(\mu_{E^c})$, and which is given on $L^p(\mu_E)$ by the isometric isomorphism $L^p(\mu_E)\rightarrow L^p(\mu_F)$ defined by
    $$\sor(\xi)(y)=f(y)(\xi\circ\eta)(y)\left(\frac{d(\mu_E\circ\eta^{-1})}{d\mu_F}(y)\right)^{\frac{1}{p}}$$
    for all $\xi\in L^p(\mu_E)$ and for all $y\in F$. A \emph{spatial idempotent} is an idempotent which is also a spatial partial isometry.
\end{df}

\begin{rem}\label{spatialreverse}
    Let $s$ be the spatial partial isometry associated to the spatial system $(E,F,\eta,f)$. Then $s$ is a partial isometry in the sense of \autoref{partialisometry}. It has a unique reverse $t$, which is the spatial partial isometry associated to the tuple $(F,E,\eta^{-1},\overline{f}\circ\eta^{-1})$. Letting $m_{\mathbbm{1}_X}$ denote the multiplication operator associated to the characteristic function $\mathbbm{1}_X$ of $X\in\mathcal{B}$, we have $st=m_{\mathbbm{1}_F}$ and $ts=m_{\mathbbm{1}_E}$. If $p=2$, the reverse of a spatial partial isometry $s$ is simply its adjoint $t=s^*$.
\end{rem}

\begin{rem}\label{spatialsemigroup}
    The product of spatial partial isometries is again a spatial partial isometry. Indeed, suppose $s_1$ and $s_2$ are spatial partial isometries associated to the spatial systems $(E_1,F_1,\eta_1,f_1)$ and $(E_2,F_2,\eta_2,f_2)$, respectively. Then $s_2s_1$ is the spatial partial isometry associated to $(\eta_1^{-1}(E_2\cap F_1),\eta_2(E_2\cap F_1),\eta_2\circ\eta_1,f_2(f_1\circ\eta_2))$. In particular, $s_2s_1=0$ if and only if $E_2\cap F_1=\emptyset$.
\end{rem}

The following is a consequence of Lamperti's theorem for localizable measures; see 
\cite[Theorem 3.7]{GarThi_isomorphisms}.

\begin{prop}(\label{spatial}\cite[Proposition 6.5]{gardella})
    Let $p\in [1,\infty)\setminus \{2\}$, let $\mathcal{E}$ be a localizable $L^p$-space and let $s\in\mathcal{B}(\mathcal{E})$. Then $s$ is a spatial partial isometry if and only if $s$ is contractive and there exists $t\in\mathcal{B}(\mathcal{E})$ contractive with $sts=s$ and $tst=t$, and such that $st$ and $ts$ are spatial idempotents.
\end{prop}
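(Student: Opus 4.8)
The plan is to prove the two implications separately. The ``only if'' direction is essentially a repackaging of \autoref{spatialreverse}: if $s$ is the spatial partial isometry associated to a spatial system $(E,F,\eta,f)$, then $s$ is contractive by \autoref{spatialpartialiso}, and \autoref{spatialreverse} produces a reverse $t$ which is itself a spatial partial isometry (hence contractive) satisfying $sts=s$, $tst=t$, $st=m_{\mathbbm{1}_F}$ and $ts=m_{\mathbbm{1}_E}$. Since $m_{\mathbbm{1}_F}$ and $m_{\mathbbm{1}_E}$ are idempotents and are spatial (being associated to the systems $(F,F,\mathrm{id},1)$ and $(E,E,\mathrm{id},1)$), this is exactly what is required.

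For the ``if'' direction, fix a representation $\mathcal{E}=L^p(\mu)$ with $\mu$ localizable, and let $t$ be as in the hypothesis. The first step is to pin down the structure of the spatial idempotents $st$ and $ts$. I would record that, for $p\neq 2$, every spatial idempotent is a multiplication operator $m_{\mathbbm{1}_G}$ for some $G\in\mathcal{B}$: writing a spatial idempotent $q$ as the spatial partial isometry of a system $(G_1,G_2,\zeta,g)$, computing $q^2$ via \autoref{spatialsemigroup}, and using the (essential) uniqueness of the spatial system representing a given spatial partial isometry when $p\neq 2$, the relation $q^2=q$ forces $G_1=G_2=:G$ up to null sets, $\zeta=\mathrm{id}$, and $g=1$. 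Applying this to $st$ and $ts$ we may write $st=m_{\mathbbm{1}_F}$ and $ts=m_{\mathbbm{1}_E}$ for suitable $E,F\in\mathcal{B}$. The identities $sts=s$ and $tst=t$ then show that $s$ annihilates $L^p(\mu_{E^c})$ and maps into $L^p(\mu_F)$, and symmetrically for $t$; moreover, restricting along the decomposition $L^p(\mu)\cong L^p(\mu_E)\oplus L^p(\mu_{E^c})$ (and likewise for $F$), the operators $s$ and $t$ induce mutually inverse contractions $\widetilde{s}\colon L^p(\mu_E)\to L^p(\mu_F)$ and $\widetilde{t}\colon L^p(\mu_F)\to L^p(\mu_E)$. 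A contractive linear bijection with contractive inverse is isometric, so $\widetilde{s}$ is a surjective isometry.

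The final step is to invoke Lamperti's theorem for localizable measures (\cite[Theorem 3.7]{GarThi_isomorphisms}): since $p\neq 2$, the surjective isometry $\widetilde{s}\colon L^p(\mu_E)\to L^p(\mu_F)$ is necessarily of the form $\xi\mapsto f\cdot(\xi\circ\eta)\cdot\bigl(\tfrac{d(\mu_E\circ\eta^{-1})}{d\mu_F}\bigr)^{1/p}$ for a bijective measurable set transformation $\eta\colon\mathcal{B}_E\to\mathcal{B}_F$ and some $f\in\mathcal{U}(L^\infty(\mu_F))$. Combined with the fact that $s$ vanishes on $L^p(\mu_{E^c})$, this exhibits $s$ as precisely the spatial partial isometry associated to the spatial system $(E,F,\eta,f)$, completing the proof. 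I expect the main obstacle to be the bookkeeping of the previous paragraph in the localizable (not necessarily $\sigma$-finite) setting, namely establishing that spatial idempotents are exactly the characteristic-function multiplication operators and that the spatial system of a spatial partial isometry is essentially unique; this is where the hypothesis $p\neq 2$ and localizability genuinely enter, and once it is in place the reduction to Lamperti's theorem is routine.
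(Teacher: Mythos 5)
Your argument is correct and follows the same route as the paper's source for this statement (the paper cites it from \cite[Proposition 6.5]{gardella}, noting it is a consequence of Lamperti's theorem for localizable measures): pass to the corners cut by the idempotents $st=m_{\mathbbm{1}_F}$ and $ts=m_{\mathbbm{1}_E}$, observe that $s$ and $t$ restrict to mutually inverse contractions and hence a surjective isometry $L^p(\mu_E)\to L^p(\mu_F)$, and apply \cite[Theorem 3.7]{GarThi_isomorphisms}. Your intermediate claim that spatial idempotents are exactly the operators $m_{\mathbbm{1}_G}$ need not be reproved via uniqueness of spatial systems, since it is already available as the equivalence of (2) and (3) in \autoref{hermitianequiv} (whose proof does not rely on \autoref{spatial}, so there is no circularity).
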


The following lemma will be needed later on.

\begin{lma}\label{normorthspatial}
    Let $p\in[1,\infty)$, let $(X,\mathcal{B},\mu)$ be a localizable measure space, let $s_1,\dots,s_n\in\mathcal{B}(L^p(\mu))$ be spatial partial isometries with orthogonal ranges, and let $\lambda\in\C^n$. Then $\|\sum_{i=1}^n\lambda_i s_i\|=\|\lambda\|_\infty$.
\end{lma}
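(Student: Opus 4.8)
The plan is to work directly with the spatial systems defining the $s_i$, using the elementary fact that, on an $L^p$-space, the $p$-th powers of the norms of functions supported on pairwise disjoint measurable sets add up. Write each $s_i$ as the spatial partial isometry associated to a spatial system $(E_i,F_i,\eta_i,f_i)$ as in \autoref{spatialpartialiso}; thus $s_i$ vanishes on $L^p(\mu_{E_i^c})$ and restricts to an isometric isomorphism $L^p(\mu_{E_i})\to L^p(\mu_{F_i})$. By \autoref{spatialreverse} the reverse $t_i$ of $s_i$ is the spatial partial isometry of $(F_i,E_i,\eta_i^{-1},\overline{f_i}\circ\eta_i^{-1})$, and $s_it_i=m_{\mathbbm{1}_{F_i}}$, $t_is_i=m_{\mathbbm{1}_{E_i}}$. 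Computing the products $s_it_j$ and $t_js_i$ via the composition rule of \autoref{spatialsemigroup}, one finds that, for $i\neq j$, $s_it_j=0$ holds exactly when $\mu(E_i\cap E_j)=0$, and $t_js_i=0$ exactly when $\mu(F_i\cap F_j)=0$. Hence the hypothesis that the $s_i$ have pairwise orthogonal ranges (\autoref{partialisometry}) says precisely that $E_1,\dots,E_n$ are pairwise disjoint and $F_1,\dots,F_n$ are pairwise disjoint; after discarding a null set we may assume genuine disjointness.

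Granting this, fix $\xi\in L^p(\mu)$ and set $\xi_i=\xi\,\mathbbm{1}_{E_i}$. Because $\xi-\xi_i$ is supported off $E_i$, where $s_i$ vanishes, we get $s_i\xi=s_i\xi_i\in L^p(\mu_{F_i})$ with $\|s_i\xi_i\|_p=\|\xi_i\|_p$. Using finiteness of $n$, disjointness of the $F_i$ for the second equality, and disjointness of the $E_i$ for the final inequality (which gives $\sum_i\|\xi_i\|_p^p=\sum_i\int_{E_i}|\xi|^p\le\|\xi\|_p^p$), we obtain
$$\left\|\sum_{i=1}^n\lambda_is_i\xi\right\|_p^p=\left\|\sum_{i=1}^n\lambda_is_i\xi_i\right\|_p^p=\sum_{i=1}^n|\lambda_i|^p\,\|\xi_i\|_p^p\le\|\lambda\|_\infty^p\sum_{i=1}^n\|\xi_i\|_p^p\le\|\lambda\|_\infty^p\,\|\xi\|_p^p,$$
so that $\|\sum_{i=1}^n\lambda_is_i\|\le\|\lambda\|_\infty$. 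For the reverse inequality, choose $j$ with $|\lambda_j|=\|\lambda\|_\infty$; since $s_j\neq0$, the set $E_j$ has positive measure, so there is a nonzero $\xi\in L^p(\mu_{E_j})$, for which $\xi_i=0$ whenever $i\neq j$ and $\xi_j=\xi$. Then the displayed chain of (in)equalities becomes the equality $\|\sum_{i=1}^n\lambda_is_i\xi\|_p=|\lambda_j|\,\|\xi\|_p$, whence $\|\sum_{i=1}^n\lambda_is_i\|\ge\|\lambda\|_\infty$, and the two bounds together give the claim.

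I do not expect a real obstacle: the argument is just the bookkeeping that converts ``orthogonal ranges'' into disjointness of the domain and range sets, together with the $\ell^p$-type orthogonality of functions living on disjoint measurable sets. The only point worth a remark is that the statement tacitly presupposes that each $s_i$ is nonzero, equivalently that $\mu(E_i)>0$; this is precisely what is needed to realize $\|\lambda\|_\infty$ as a lower bound for the norm.
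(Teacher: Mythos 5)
Your proof is correct and takes essentially the same route as the paper's: both use the composition rule for spatial partial isometries (\autoref{spatialsemigroup}) to convert the orthogonal-ranges hypothesis into pairwise disjointness of the domain sets $E_i$ and of the range sets $F_i$, and then read off the norm identity. You simply spell out the final $L^p$-estimate that the paper compresses into $\|\sum_{i}\lambda_i s_i\|=\max_i\|\lambda_i s_i\|$ (your use of disjointness of the $F_i$ as well as the $E_i$ is exactly what that step needs), and your remark that each $s_i$ must be nonzero for the lower bound applies equally to the paper's argument.
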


\begin{proof}
    For $i=1,\dots,n$, let $(E_i,F_i,\eta_i,f_i)$ be the spatial system associated to $s_i$. Let $t_i$ be the reverse of $s_i$, associated to the system $(F_i,E_i,\eta_i^{-1},\overline{f_i}\circ\eta_i^{-1})$. If $t_i s_j=0$, by \autoref{spatialsemigroup} we must have $E_i\cap E_j=\emptyset$. As $t_i s_j=0$ for all $i\neq j$, the supports of the $s_i$ are pairwise disjoint. Hence $||\sum_{i=1}^n\lambda_i s_i||=\max_{1\leq i\leq n}\|\lambda_i s_i\|=\|\lambda\|_\infty$.
\end{proof}

\begin{df}\label{hermitiandef}
    Let $A$ be a unital Banach algebra. An element $a\in A$ is called \emph{hermitian} if $\|\text{exp}(i\lambda a)\|\leq 1$ for all $\lambda\in\R$. An element which is both hermitian and an idempotent is called a \emph{hermitian idempotent}.
\end{df}

Note that an element in a $C^*$-algebra is hermitian if and only if it is self-adjoint.

\begin{rem}\label{preservehermitian}
    Let $A$ and $B$ be unital Banach algebras, and let $\varphi\colon A\rightarrow B$ be a unital contractive homomorphism. Suppose $a\in A$ is hermitian. Then for $\lambda\in\R$, we have 
    $$\|\text{exp}(i\lambda\varphi(a))\|=\|\varphi(\text{exp}(i\lambda a))\|\leq \|\text{exp}(i\lambda a)\|\leq 1,$$ 
    so that $\varphi(a)$ is hermitian.
    Moreover, if $a,b\in A$ are hermitian and $\lambda\in\R$, then 
    $$\|\text{exp}(i\lambda(a\pm b))\|\leq \|\text{exp}(i\lambda a)\|\|\text{exp}(i(\pm\lambda) b)\|\leq 1.$$
    Thus, sums and differences of hermitian elements are again hermitian.
\end{rem}

As stated in the proposition below, for $p\in[1,\infty)\setminus\{2\}$ the hermitian idempotents in $\mathcal{B}(L^p(\mu))$ are precisely the spatial idempotents. For $p=2$, the hermitian idempotents are precisely the orthogonal projections, which is a larger class than the spatial idempotents.

\begin{prop}\label{hermitianequiv}
    Let $p\in[1,\infty)\setminus\{2\}$, let $(X,\mathcal{B},\mu)$ be a localizable measure space, and let $e\in \mathcal{B}(L^p(\mu))$ be an idempotent. Then the following are equivalent:
    \begin{enumerate}
        \item $e$ is a hermitian idempotent,
        \item $e$ is a spatial idempotent,
        \item there is $E\in \mathcal{B}$ such that $e=m_{\mathbbm{1}_E}$.
    \end{enumerate}
\end{prop}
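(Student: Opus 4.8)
The plan is to prove the cycle of implications $(3)\Rightarrow(2)\Rightarrow(1)\Rightarrow(3)$, where the first two steps are essentially bookkeeping and the last step is the analytic heart of the matter. For $(3)\Rightarrow(2)$: given $e=m_{\mathbbm{1}_E}$, I would exhibit $e$ as the spatial partial isometry attached to the spatial system $(E,E,\id_{\mathcal{B}_E},\mathbbm{1}_E)$; the Radon--Nikodym factor is $1$ and the formula in \autoref{spatialpartialiso} collapses to $\sor(\xi)=\xi$ on $L^p(\mu_E)$ while $e$ vanishes on $L^p(\mu_{E^c})$, so $e$ is both an idempotent and a spatial partial isometry, hence a spatial idempotent. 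For $(2)\Rightarrow(1)$: if $e$ is a spatial idempotent then by \autoref{spatial} (applied with $s=t=e$, noting $e\cdot e\cdot e=e$ and $e^2=e$ is a spatial idempotent) $e$ is contractive; more directly, I would observe that a spatial idempotent is multiplication by $\mathbbm{1}_E$ for some $E$ (this is already part of what $(3)\Rightarrow(2)$ and the structure theory give, but one can argue it directly from \autoref{spatialsemigroup}: $e^2=e$ forces $\eta$ to be the identity and $f\equiv 1$ on the relevant set, and the Radon--Nikodym derivative to be $1$), and then $\|\exp(i\lambda e)\|=\|m_{\mathbbm{1}_{E^c}}+e^{i\lambda}m_{\mathbbm{1}_E}\|=\max\{1,|e^{i\lambda}|\}=1$ for all $\lambda\in\R$, so $e$ is hermitian.

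The substantive implication is $(1)\Rightarrow(3)$. Here I would use Lamperti's theorem for localizable measures, \cite[Theorem 3.7]{GarThi_isomorphisms}, together with the one-parameter group $t\mapsto \exp(it\lambda e)$. Since $e$ is hermitian and idempotent, for each $\lambda\in\R$ the operator $u_\lambda := \exp(i\lambda e) = 1 + (e^{i\lambda}-1)e$ is invertible with inverse $\exp(-i\lambda e) = 1 + (e^{-i\lambda}-1)e$, and both have norm $\le 1$ by Definition \ref{hermitiandef}, so each $u_\lambda$ is an invertible isometry of $L^p(\mu)$. For $p\ne 2$, Lamperti's theorem says every invertible isometry is spatial, i.e.\ of the form $m_h \circ T_\phi$ for a set transformation $\phi$ and a unimodular $h$. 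The point is then that the family $\{u_\lambda\}$ is a norm-continuous one-parameter group of such isometries fixing a distinguished structure: writing $e = \frac{1}{e^{i\lambda}-1}(u_\lambda - 1)$ for any $\lambda$ with $e^{i\lambda}\ne 1$, the idempotent $e$ is recovered from the isometries, and I would argue that the only way a genuine spatial isometry can differ from the identity by a scalar multiple of a fixed idempotent is for that idempotent to be a multiplication operator $m_{\mathbbm{1}_E}$. Concretely: decompose $u_\lambda$ via Lamperti as $m_{h_\lambda} T_{\phi_\lambda}$; from $u_\lambda = 1 + (e^{i\lambda}-1)e$ and the fact that the right-hand side acts ``locally'' (it commutes with $m_{\mathbbm{1}_F}$ whenever $e$ does — and $e$ commutes with enough multiplication operators because $eu_\lambda=u_\lambda e$), deduce that $\phi_\lambda$ is the identity set transformation and $h_\lambda$ takes only the two values $1$ and $e^{i\lambda}$; the set where $h_\lambda=e^{i\lambda}$ is independent of $\lambda$ and is the desired $E$, giving $e=m_{\mathbbm{1}_E}$.

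The main obstacle I anticipate is the last step — extracting from Lamperti's description of the isometries $u_\lambda$ that the underlying set transformation must be trivial and the cocycle $h_\lambda$ two-valued. The cleanest route is probably to avoid reconstructing $e$ from a single $u_\lambda$ and instead use that $e$ itself, being hermitian, has real spectrum contained in $\{0,1\}$ (an idempotent has spectrum $\subseteq\{0,1\}$, and hermitian elements have real spectrum — consistent but not extra information), and to study the range projection: $eL^p(\mu)$ is a contractively complemented subspace (complement norm $\le 1$ since $1-e$ is also hermitian of norm... well, one checks $\|1-e\|\le 1$ is not automatic, so instead use that $e$ is a \emph{bicontractive} idempotent when suitably normalized, or pass through the hermitian projection structure theorem). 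In fact the slickest argument invokes the known classification of hermitian projections on $L^p$-spaces (for $p\ne 2$ these are exactly the band projections $m_{\mathbbm{1}_E}$), which is itself a consequence of Lamperti; I would cite or reprove that, and the proposition follows immediately. I expect the write-up to hinge on having the right reference for ``hermitian idempotent on $L^p$, $p\ne 2$, implies band projection'' and otherwise to be short.
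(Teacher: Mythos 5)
Your overall strategy coincides with the paper's: the paper disposes of this proposition by noting it is \cite[Lemma 6.9]{phillips viola} for $\sigma$-finite spaces and that the same proof goes through for localizable measures once one uses the localizable version of Lamperti's theorem \cite[Theorem 3.7]{GarThi_isomorphisms}; that proof is precisely your scheme of exponentiating the hermitian idempotent and feeding the resulting invertible isometries $u_\lambda=\exp(i\lambda e)=1+(e^{i\lambda}-1)e$ into Lamperti. Your implications $(3)\Rightarrow(2)\Rightarrow(1)$ are fine; for the intermediate claim that a spatial idempotent is of the form $m_{\mathbbm{1}_E}$, the quickest route is that $e\cdot e\cdot e=e$ exhibits $e$ as its own reverse, so $e=e\cdot e=m_{\mathbbm{1}_F}$ by \autoref{spatialreverse} (your composition-rule argument via \autoref{spatialsemigroup} also works, but needs the essential uniqueness of the spatial system).

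The one genuine soft spot is the mechanism you propose for $(1)\Rightarrow(3)$: the commutation/locality step is circular as written, since $u_\lambda$ is a polynomial in $e$, so $eu_\lambda=u_\lambda e$ carries no information and cannot be used to show that $e$ commutes with multiplication operators. The standard completion, which is essentially the Phillips--Viola argument, uses the norm continuity you already observed but did not exploit: for $\lambda$ small, $\|u_\lambda-1\|=|e^{i\lambda}-1|\,\|e\|<1$, whereas if the set transformation $\phi_\lambda$ furnished by Lamperti were nontrivial on a set of positive measure, one could choose a set $B$ of finite positive measure moved off itself, and the normalized indicator of $B$ would give $\|u_\lambda-1\|\geq 2^{1/p}\geq 1$. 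Hence for small $\lambda$ the set transformation is trivial, $u_\lambda$ is a multiplication operator, and $e=(e^{i\lambda}-1)^{-1}(u_\lambda-1)$ is multiplication by an idempotent function, i.e.\ $e=m_{\mathbbm{1}_E}$; in particular the independence of $E$ from $\lambda$ is automatic. Your fallback of citing the classification of hermitian operators (or hermitian projections) on $L^p$-spaces for $p\neq 2$ as real multiplication operators (band projections) is legitimate and not circular --- in the $\sigma$-finite case it is literally the lemma the paper cites --- but one must then check that the version invoked covers localizable measures, which is exactly the only point the paper's proof has to address.
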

\begin{proof} The proof is identical to that of \cite[Lemma 6.9]{phillips viola}, which gives
 the same statement for $\sigma$-finite measure spaces, applying the version of Lamperti's theorem
 for localizable measures (\cite[Theorem~3.7]{GarThi_isomorphisms}) instead of the version for 
 $\sigma$-finite spaces. We omit the details.
\end{proof}

We have so far only defined hermitian elements in \emph{unital} Banach algebras, since 
\autoref{hermitiandef} uses exponentials via 
holomorphic functional calculus, and therefore requires the existence of a unit. On the other hand, in the following sections we will need a notion of hermitian elements in a not necessarily unital $L^p$-operator algebra. 
For this, we will consider a unitization of the algebra in question, but one must be careful since norms on unitizations are not canonical, and whether an element is hermitian depends on the norm used. For an illustration of this, see \cite[Example 6.12]{phillips viola}. 

These considerations lead us to the following definition. We will write $A^+$ for the minimal (one-dimensional) unitization of a Banach algebra $A$, and we will write $M(A)$ for its 
multiplier algebra (with the norm induced by regarding multipliers on $A$ as bounded linear
maps $A\to A$).

\begin{df}\label{df:unitizable} 
Let $p\in [1,\infty)$ and let $A$ be an $L^p$-operator algebra. We say that $A$ is \emph{unitizable} (as an $L^p$-operator algebra), if $A^+$ is an $L^p$-operator algebra when
endowed with the norm induced by the inclusion $A^+\subseteq M(A)$.
\end{df}

We highlight a large class of unitizable $L^p$-operator algebras:

\begin{rem}
For any $p\in [1,\I)$, every $L^p$-operator algebra $A$ with a contractive approximate identity which
admits a nondegenerate representation on an $L^p$-space is 
unitizable; in fact, in this case $M(A)$ is 
an $L^p$-operator algebra by \cite[Theorem 4.1]{gardella thiel repr}. The assumption that there
is a nondegenerate representation on an $L^p$-space is automatic of $p>1$ by \cite[Theorem 4.3]{gardella thiel repr}, and it is also automatic if $A$ contains an approximate identity consisting of idempotents as in \cite[Definition~9.7]{phillips viola}, by \cite[Definition~9.9]{phillips viola}.
\end{rem}

We will show in \autoref{stespatial} that $L^p$-operator graph algebras are unitizable, by showing that they admit nondegenerate
representations on $L^p$-spaces.
Although we will not explicitly need it in our work, we 
also point out that $L^p$-operator graph algebras also 
contain an approximate identity consisting of idempotents as in \cite[Definition~9.7]{phillips viola}\footnote{Such an approximate identity can be constructed by taking finite sums of the idempotents associated to the vertices of the graph.}.

We now define hermitian elements in unitizable $L^p$-operator algebras in the expected way. For a Banach algebra $A$, we always endow $A^+$ with the norm
induced by its inclusion into $M(A)$.

\begin{df}\label{hermitiannonunital}
Let $p\in[1,\infty)$ and let $A$ be a unitizable $L^p$-operator
algebra. We say that an element $a\in A$ is \emph{hermitian} in $A$ if it is hermitian in $A^+$.
\end{df}

\section{$L^p$-operator graph algebras and Cuntz-Krieger families}

In this section, we review the definition of $L^p$-operator graph
algebras and develop some theory in order to prove their universal property in terms of what we will call spatial Cuntz-Krieger families.

\begin{df}
A \emph{directed graph} $Q$ is a tuple $Q=(Q^0,Q^1,\ran,\sor)$ consisting of a set $Q^0$ of \emph{vertices}, a set $Q^1$ of \emph{edges}, and functions $\sor,\ran\colon Q^1\rightarrow Q^0$ called the \emph{source} and \emph{range} functions, respectively. 

We say that $Q$ is \emph{finite} or \emph{countable} if both $Q^0$ and $Q^1$ are finite or countable, respectively. A vertex $v\in Q^0$ is called:
\begin{enumerate}
    \item a \emph{source} if $\ran^{-1}(v)=\emptyset$,
    \item a \emph{sink} if $\sor^{-1}(v)=\emptyset$,
    \item an \emph{infinite emitter} if $\sor^{-1}(v)$ is infinite,
    \item an \emph{infinite receiver} if $\ran^{-1}(v)$ is infinite,
    \item \emph{regular} if $v$ is not a source or an infinite receiver.
\end{enumerate}
We write $\text{Reg}(Q)$ for the set of regular vertices of $Q$, and we say that $Q$ is regular if $\text{Reg}(Q)=Q^0$. 
\end{df}

Since we will only work with directed graphs, we shall use the term graph to refer to a directed graph. Also, if the graph $Q$ is not clear from the context (which will only happen in the proof of 
\autoref{finitegraphembedding} and \autoref{prop:DirLim}), we write $\ran_Q$ and $\sor_Q$ for the functions $\ran$ and $\sor$, respectively. 

We now proceed to define paths, and in particular establish our
convention for concatenation of edges. For this, we will regard
edges as morphisms and concatenate them from right to left, 
which agrees with the convention used in \cite{raeburn} and is opposite of that in \cite{cortinas rodriguez}.\footnote{One consequence of the difference in conventions is 
what is here called an ``entry'' is
actually an ``exit'' in \cite{cortinas rodriguez}.}

\begin{df}
Let $Q$ be a graph. A \emph{path} in $Q$ is a (finite or infinite) sequence $\alpha =(\alpha_n)_{n\geq 1}$ of edges $\alpha_n\in Q^1$, such that $\sor(\alpha_n)=\ran(\alpha_{n+1})$ for all $n\geq 1$. We identify a path $\alpha=(\alpha_n)_{n\geq 1}$ with the formal expression $\alpha_1\alpha_2\cdots\alpha_n\cdots$. We set $\ran(\alpha)=\ran(\alpha_1)$, and if $\alpha=\alpha_1\cdots\alpha_n$ is finite, we write $|\alpha|=n$ and $\sor(\alpha)=\sor(\alpha_n)$. 

A finite path $\alpha=\alpha_1\cdots\alpha_n$ is called a \emph{cycle} if $\ran(\alpha)=\sor(\alpha)$ and $\alpha_i\neq \alpha_j$ for $i\neq j$. An edge $a\in Q^1$ is called an \emph{entry} for the cycle $\alpha$ if there exists $1\leq i\leq n$ such that $\ran(a)=\ran(\alpha_i)$ but $a\neq\alpha_i$. If $Q$ contains no cycles, then it is called \emph{acyclic}. 
\end{df}

\begin{nota} \label{nota:paths}
We denote by $Q^*$ the set of finite paths of $Q$, and we denote by $Q^n$ the set of paths of length $n$, considering vertices as paths of length $0$. Hence $Q^*=\bigcup_{n\geq 0}Q^n$. The set $ Q^*$ is equipped with the preorder given by $\alpha\leq \beta$ whenever there exists $\gamma\in Q^*$ such that $\alpha=\beta\gamma$.
\end{nota}

Next, we define the Leavitt path algebra associated to a graph.

\begin{df}\label{leavittpathalg}
    Let $Q$ be a graph. Its \emph{Leavitt path algebra} $L_Q$ is the universal complex algebra generated by elements $s_a,t_a$ and $e_v$ for $a\in Q^1$ and for $v\in Q^0$, subject to the following relations:
    \begin{enumerate}
        \item $e_ve_w=\delta_{v,w}e_v$ for all $v,w\in Q^0$,
        \item $e_{r(a)}s_a=s_ae_{s(a)}=s_a$ and $t_ae_{r(a)}=e_{s(a)}t_a=e_a$ for all $a\in Q^1$,
        \item $t_as_b=\delta_{a,b}e_{\sor(b)}$ for all $a,b\in Q^1$ (CK1),
        \item $e_v=\sum_{a\in \ran^{-1}(v)}s_at_a$ for all regular vertices $v\in \mathrm{Reg}(Q)$ (CK2).
    \end{enumerate}
\end{df}

\begin{rem}\label{tsrelationcomparable}
Note that the relations in \autoref{leavittpathalg} imply that for $\alpha,\beta\in Q^*$ we have
\begin{equation*}
    t_\alpha s_\beta=
    \begin{cases}
        e_{\sor(\alpha)} &\text{if }\alpha=\beta,\\
        t_{\alpha'} &\text{if }\alpha=\beta\alpha',\\
        s_{\beta'} &\text{if }\beta=\alpha\beta'\\
        0 &\text{else}.
    \end{cases}    
\end{equation*}
Hence $t_\alpha s_\beta\neq 0$ if and only if $\alpha$ and $\beta$ are comparable under the preorder on $Q^*$ described in \autoref{nota:paths}.
\end{rem}

\begin{df}\label{spatialckfam}
    Let $p\in[1,\infty)$, let $Q$ be a graph and let $A$ be a unitizable $L^p$-operator algebra. A \emph{Cuntz-Krieger $Q$-family} in $A$ is a family 
    \[(S,T,E)=\big(\{S_a\}_{a\in Q^1},\{T_a\}_{a\in Q^1},\{E_v\}_{v\in Q^0}\big)\]
    of elements in $A$ such that
\begin{enumerate}
 \item[(i)] $\{E_v\colon v\in Q^0\}$ is a family of pairwise orthogonal hermitian idempotents;
 \item[(ii)] for each $a\in Q^1$, we have $\|S_a\|\leq 1$ and $\|T_a\|\leq 1$;
 \item[(iii)] for each $a\in Q^1$, the operator $S_a$ is a partial isometry with reverse $T_a$, and both $S_aT_a$ and $T_aS_a$ are hermitian idempotents,
\end{enumerate}
satisfying the following relations:
    \begin{enumerate}
        \item $E_{\ran(a)}S_a=S_aE_{\sor(a)}=S_a$ and $T_aE_{\ran(a)}=E_{\sor(a)}T_a=T_a$ for all $a\in Q^1$,
        \item $T_aS_b=\delta_{a,b}E_{\sor(b)}$ for all $a,b\in Q^1$ (CK1),
        \item $E_v=\sum_{a\in \ran^{-1}(v)}S_aT_a$ for all regular vertices $v\in \mathrm{Reg}(Q)$ (CK2).
    \end{enumerate}
\end{df}

We now identify the relevant classes of representations of graphs on $L^p$-operator algebras.

\begin{df}\label{lpspatialrep}
    Let $p\in [1,\infty)$, let $Q$ be a graph and let $A$ be a unitizable $L^p$-operator algebra. A \emph{representation} of $L_Q$ on $A$ is an algebra homomorphism $L_Q\rightarrow A$ such that $(\varphi(s),\varphi(t),\varphi(e))$ is a Cuntz-Krieger $Q$-family in $A$. 
    
    If $(X,\mathcal{B},\mu)$ is a localizable measure space and $A=\mathcal{B}(L^p(\mu))$, then we say that $\varphi$ is \emph{spatial} if $(\varphi(s),\varphi(t),\varphi(e))$ is a family of spatial partial isometries in $A$.
\end{df}

\begin{rem}\label{differentreps}
    If $p\in[1,\infty)\setminus\{2\}$, then any representation of $L_Q$ on an $L^p$-space is spatial by \autoref{hermitianequiv} and \autoref{spatial}. If $p=2$, then a representation is precisely a homomorphism such that the $\varphi(e_v)$ are mutually orthogonal projections, the $\varphi(s_a)$ are partial isometries and $\varphi(t_a)=\varphi(s_a)^*$ for each $a\in Q^1$. Thus, \autoref{lpspatialrep} extends the usual notion of Cuntz-Krieger families from C*-algebras to $L^p$-operator algebras.
\end{rem}

In the following proposition, for $p\neq 2$ we obtain a spatial
description of Cuntz-Krieger families.

\begin{prop}\label{orthogonal}
Let $p\in[1,\infty)\setminus\{2\}$, let $Q$ be a graph, let $(X,\mathcal{B},\mu)$ be a localizable measure space and let $\varphi\colon L_Q\rightarrow\mathcal{B}(L^p(\mu))$ be a representation, and let $\alpha,\beta\in Q^*$. 
Then:
\be\item The element $\varphi(s_\alpha t_\alpha)\in \mathcal{B}(L^p(\mu))$ is a spatial idempotent, so there exists $X_\alpha\in\mathcal{B}$ with $\varphi(s_\alpha t_\alpha)=m_{\mathbbm{1}_{X_\alpha}}$.
\item If $\alpha\leq \beta$, then $X_\alpha\subseteq X_\beta$.
\item If $\alpha$ and $\beta$ are incomparable, then $X_\alpha\cap X_\beta=\emptyset$. Moreover, $\varphi(s_\alpha)$ and $\varphi(s_\beta)$ are partial isometries with orthogonal ranges.
\ee\end{prop}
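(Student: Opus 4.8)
The plan is to exploit \autoref{hermitianequiv}, which identifies hermitian (equivalently spatial) idempotents in $\mathcal{B}(L^p(\mu))$ with multiplication operators $m_{\mathbbm{1}_E}$, together with the multiplicative bookkeeping recorded in \autoref{tsrelationcomparable}. For part~(1), I first note that $s_\alpha$ is a product of partial isometries of the form $s_{\alpha_i}$ (or, for a vertex, an idempotent $e_v$), so $\varphi(s_\alpha)$ is a product of spatial partial isometries in $\mathcal{B}(L^p(\mu))$, hence a spatial partial isometry by \autoref{spatialsemigroup} (using \autoref{differentreps} to see that each $\varphi(s_{\alpha_i})$ is spatial); its reverse is $\varphi(t_\alpha)$ by uniqueness of reverses (\autoref{spatialreverse}). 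Then $\varphi(s_\alpha t_\alpha)=\varphi(s_\alpha)\varphi(t_\alpha)$ is of the form $st$ for a spatial partial isometry, hence a spatial idempotent by \autoref{spatialreverse}, and \autoref{hermitianequiv} produces the set $X_\alpha\in\mathcal{B}$ with $\varphi(s_\alpha t_\alpha)=m_{\mathbbm{1}_{X_\alpha}}$.

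For part~(2), suppose $\alpha\leq\beta$, so $\alpha=\beta\gamma$ for some $\gamma\in Q^*$. Then, using the relation $t_\beta s_\beta = e_{\sor(\beta)}$ (and $e_{\ran(\gamma)}=e_{\sor(\beta)}$) one computes
\[
s_\alpha t_\alpha \;=\; s_\beta s_\gamma t_\gamma t_\beta \;=\; s_\beta (s_\gamma t_\gamma) t_\beta,
\]
and since $s_\gamma t_\gamma$ commutes with $e_{\sor(\beta)}=s_\gamma t_\gamma \cdot e_{\sor(\beta)} = \cdots$ one gets $(s_\beta s_\gamma t_\gamma t_\beta)(s_\beta t_\beta) = s_\beta (s_\gamma t_\gamma t_\beta s_\beta) t_\beta = s_\beta (s_\gamma t_\gamma e_{\sor(\beta)}) t_\beta = s_\beta s_\gamma t_\gamma t_\beta = s_\alpha t_\alpha$. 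Applying $\varphi$ gives $m_{\mathbbm{1}_{X_\alpha}} m_{\mathbbm{1}_{X_\beta}} = m_{\mathbbm{1}_{X_\alpha}}$, i.e. $m_{\mathbbm{1}_{X_\alpha\cap X_\beta}}=m_{\mathbbm{1}_{X_\alpha}}$, whence $X_\alpha\subseteq X_\beta$ (up to $\mu$-null sets, which is all that is meant here).

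For part~(3), assume $\alpha,\beta$ are incomparable. By \autoref{tsrelationcomparable}, $t_\alpha s_\beta=0$ and $t_\beta s_\alpha=0$, so also $s_\alpha t_\alpha s_\beta t_\beta=0$; applying $\varphi$ yields $m_{\mathbbm{1}_{X_\alpha}}m_{\mathbbm{1}_{X_\beta}}=0$, i.e. $\mathbbm{1}_{X_\alpha\cap X_\beta}=0$, so $X_\alpha\cap X_\beta=\emptyset$. For the orthogonal-ranges claim, $\varphi(s_\alpha)$ and $\varphi(s_\beta)$ are spatial partial isometries (as in part~(1)) with reverses $\varphi(t_\alpha)$ and $\varphi(t_\beta)$; the identities $\varphi(t_\alpha)\varphi(s_\beta)=\varphi(t_\alpha s_\beta)=0$ and $\varphi(t_\beta)\varphi(s_\alpha)=0$ are exactly the requirement in \autoref{partialisometry} that they have orthogonal ranges. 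The only genuine subtlety I anticipate is being careful that ``$X_\alpha$'' is only defined up to a $\mu$-null set, so all the set-theoretic inclusions and disjointness statements should be read modulo null sets; once that convention is fixed, every step above is a routine translation of the algebraic relations in \autoref{leavittpathalg} through \autoref{hermitianequiv}. I do not expect a serious obstacle; the main point is simply to assemble \autoref{spatialsemigroup}, \autoref{spatialreverse}, \autoref{hermitianequiv}, and \autoref{tsrelationcomparable} in the right order.
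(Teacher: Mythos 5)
Your proposal is correct in substance. Part (1) and the disjointness statement in (3) follow the same route as the paper: the semigroup property of spatial partial isometries (\autoref{spatialsemigroup}) combined with \autoref{hermitianequiv} gives the sets $X_\alpha$, and applying $\varphi$ to $s_\alpha t_\alpha s_\beta t_\beta=0$ gives $X_\alpha\cap X_\beta=\emptyset$. For part (2), however, you take a genuinely different route: the paper identifies the spatial system of $\varphi(s_\beta)$ as $(X_{\sor(\beta)},X_\beta,\eta_\beta,f_\beta)$ and deduces $X_\alpha=\eta_\beta(X_\gamma)\subseteq\eta_\beta(X_{\sor(\beta)})=X_\beta$, whereas you prove the purely algebraic identity $(s_\alpha t_\alpha)(s_\beta t_\beta)=s_\alpha t_\alpha$ in $L_Q$ and push it through $\varphi$ to get $m_{\mathbbm{1}_{X_\alpha}}m_{\mathbbm{1}_{X_\beta}}=m_{\mathbbm{1}_{X_\alpha}}$. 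This is cleaner (it is a one-liner from \autoref{tsrelationcomparable}: $t_\alpha s_\beta=t_\gamma$, so $s_\alpha t_\alpha s_\beta t_\beta=s_\alpha t_\gamma t_\beta=s_\alpha t_\alpha$), and the fact that the resulting inclusion holds only modulo null sets costs nothing, since $X_\alpha$ is determined by $m_{\mathbbm{1}_{X_\alpha}}$ only up to null sets anyway. (The intermediate sentence about $s_\gamma t_\gamma$ commuting with $e_{\sor(\beta)}$ is garbled, but your displayed chain of equalities is correct.)

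One point in (3) needs a caveat. You claim that $\varphi(t_\alpha)\varphi(s_\beta)=0$ and $\varphi(t_\beta)\varphi(s_\alpha)=0$ are ``exactly'' the requirement of \autoref{partialisometry}; as literally written, that definition also lists the products $s_it_j$ for $i\neq j$, i.e.\ $\varphi(s_\alpha t_\beta)$ and $\varphi(s_\beta t_\alpha)$, and these do not vanish in general for incomparable paths with a common source: for two distinct edges $a\neq b$ with $\sor(a)=\sor(b)=v$ and $\varphi(e_v)\neq 0$, one has $\varphi(s_at_b)\neq 0$ by \autoref{spatialsemigroup}, and no alternative choice of reverses can repair this, since any reverse $t$ of $\varphi(s_b)$ with $\varphi(s_a)t=0$ would have image inside $\ker\varphi(s_a)=\ker\varphi(s_b)$, forcing $\varphi(s_b)t\varphi(s_b)=0\neq\varphi(s_b)$. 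So the definition of orthogonal ranges must be read with the intended, weaker meaning that only $t_is_j=0$ for $i\neq j$ is required; under that reading your direct verification from $t_\alpha s_\beta=t_\beta s_\alpha=0$ is complete, and it is in fact exactly what the paper's own proof of (3) establishes (the paper obtains the same two identities, less directly, from the disjointness of $X_\alpha$ and $X_\beta$ via \autoref{spatialreverse} and \autoref{spatialsemigroup}). In short: no genuine gap on your side, but you should flag that the two extra products appearing in \autoref{partialisometry} cannot be expected to vanish, rather than asserting that your two identities match the definition verbatim.
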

\begin{proof}    
Note that $\varphi(s_\alpha t_\beta)$ is a spatial partial isometry for any $\alpha,\beta\in Q^*$, as the set of spatial partial isometries is a semigroup by \autoref{spatialsemigroup}.

(1). This follows directly from the line above together with \autoref{hermitianequiv}.

(2). One can readily that $\varphi(s_\beta)$ is the spatial partial isometry associated to a spatial system of the form $(X_{\sor(\beta)},X_\beta,\eta_\beta,f_\beta)$, where $\eta_\beta$ is a measurable set transformation from $X_{\sor(\beta)}$ to $X_\beta$, and where $f_\beta\in\mathcal{U}(L^\infty(X_\beta))$. 

Suppose that $\alpha\leq\beta$, and choose $\gamma\in Q^*$ with $\alpha=\beta\gamma$. Then $X_{\sor(\beta)}=\text{supp}(\eta_\beta),$ and therefore 
$$X_\alpha=\eta_\beta(X_\gamma)\subseteq \eta_\beta(X_{\sor(\beta)})=X_\beta.$$ 

(3).
If $\alpha$ and $\beta$ are incomparable, by \autoref{tsrelationcomparable} we have 
$$0=\varphi(s_\alpha t_\alpha s_\beta t_\beta)=m_{\mathbbm{1}_{X_\alpha}}m_{\mathbbm{1}_{X_\beta}},$$ 
so $X_\alpha$ and $X_\beta$ are disjoint. By \autoref{spatialreverse} and \autoref{spatialsemigroup}, this implies that $\varphi(s_\alpha)$ and $\varphi(s_\beta)$ are partial isometries with orthogonal ranges. 
\end{proof}

\begin{rem}\label{propertiesspatialfam}
    Let $p\in[1,\infty)\setminus\{2\}$, let $Q$ be a graph and let $(S,T,E)$ be a Cuntz-Krieger $Q$-family in a unitizable $L^p$-operator algebra $A$. By \autoref{unitalrep} there is a unital isometric representation $\varphi$ of $A^+$ on an $L^p$-space $\mathcal{E}$. By \cite[Proposition~2.6]{choi gardella thiel}, we can realize $\mathcal{E}$ as the $L^p$-space of a localizable measure.
    By \autoref{preservehermitian} the map $\varphi$ preserves hermitian elements, so by \autoref{differentreps} the family $(\varphi(S),\varphi(T),\varphi(E))$ consists of spatial partial isometries in $\mathcal{B}(\mathcal{E})$. It follows from parts~(1) and~(3) of~\autoref{orthogonal} that for each $\alpha\in Q^*$ the element $S_\alpha T_\alpha$ is a hermitian idempotent, and if $\alpha_1,\dots,\alpha_n$ are incomparable paths then $S_{\alpha_1},\dots,S_{\alpha_n}$ are partial isometries with orthogonal ranges. By \autoref{normorthspatial}, for $\lambda\in\C^n$ we have $\|\sum_{i=1}^n \lambda_i S_{\alpha_i}\|=\|\lambda\|_\infty$.
\end{rem}

The following definition is similar to the one presented in \cite[Section 7]{cortinas rodriguez}, with a slight modification which allows us to include the case $p=2$ in our discussion.

\begin{df}\label{opq}
    Let $p\in[1,\infty)$, and let $Q$ be a graph. The \emph{$L^p$-operator graph algebra} $\mathcal{O}^p(Q)$ of $Q$ is the completion of $L_Q$ in the norm 
    $$\|x\|=\sup \big\{\|\varphi(x)\|\colon \varphi\text{ is a representation of } L_Q \text{ on an }L^p\text{-space} \big\}.$$
\end{df}

By \autoref{differentreps}, the $L^2$-operator graph algebra $\mathcal{O}^2(Q)$ is precisely the graph $C^*$-algebra $C^*(Q)$ as defined in \cite[Proposition 1.21]{raeburn}. Since the results in this paper are already known for $p=2$ (as can be found in \cite{raeburn}), we will largely restrict ourselves to the case $p\in[1,\infty)\setminus\{2\}$ in our proofs. For these values of $p$, we can take advantage of the geometric properties of spatial representations.

The following is straightforward, so we only sketch an idea of the proof.

\begin{lma}\label{opqnormproperties}
    Let $p\in[1,\infty)$, and let $Q$ be a graph. Then $\|\cdot\|$ is a norm, and $\mathcal{O}^p(Q)$ admits an isometric nondegenerate representation on the $L^p$-space of a localizable measure. In particular, $\mathcal{O}^p(Q)$ is an $L^p$-operator algebra.  
\end{lma}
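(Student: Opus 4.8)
The plan is to establish the three assertions in turn: that $\|\cdot\|$ is a finite submultiplicative seminorm on $L_Q$, that it is in fact a norm, and that $\mathcal{O}^p(Q)$ admits an isometric nondegenerate representation on the $L^p$-space of a localizable measure (after which the ``in particular'' clause is immediate). For finiteness I would invoke the normal form for Leavitt path algebras: every $x\in L_Q$ is a finite linear combination $x=\sum_i\lambda_i s_{\alpha_i}t_{\beta_i}$ with $\alpha_i,\beta_i\in Q^*$ (cf.\ \autoref{tsrelationcomparable}). For any representation $\varphi$ of $L_Q$ on an $L^p$-space, conditions (ii)--(iii) of \autoref{spatialckfam}, together with \autoref{hermitianequiv} (which shows a hermitian idempotent in $\mathcal{B}(L^p(\mu))$ has the form $m_{\mathbbm{1}_E}$ and is therefore contractive), imply that each generator is sent to a contraction, so $\|\varphi(x)\|\le\sum_i|\lambda_i|$. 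Hence the supremum defining $\|x\|$ is finite, and being a supremum of algebra seminorms it is automatically submultiplicative.

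To see $\|\cdot\|$ separates points I would exhibit a faithful representation of $L_Q$ on an $L^p$-space. Plain path-space representations are insufficient (for a single loop $L_Q\cong\mathbb{C}[z,z^{-1}]$ while the boundary path space is a single point), so I would use the \emph{graded boundary path-space representation}. Let $\partial Q$ be the boundary path space of $Q$ --- the infinite paths together with the finite paths whose source lies outside $\mathrm{Reg}(Q)$ --- equip $\partial Q\times\mathbb{Z}$ with counting measure (a localizable measure space whose $L^p$-space is $\ell^p(\partial Q)\otimes\ell^p(\mathbb{Z})$), let $u$ denote the bilateral shift on $\ell^p(\mathbb{Z})$, and define $\pi\colon L_Q\to\mathcal{B}(\ell^p(\partial Q)\otimes\ell^p(\mathbb{Z}))$ by letting $\pi(e_v)$ be multiplication by the indicator of $\{x\in\partial Q:\ran(x)=v\}$ tensored with $1$, and setting $\pi(s_a)=(\text{prepend }a)\otimes u$ and $\pi(t_a)=(\text{delete the initial edge }a)\otimes u^{-1}$. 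One checks that these are spatial partial isometries and that the relations of \autoref{spatialckfam} hold: the factors $u^{\pm1}$ cancel in (CK1) and are absent from (CK2), while the passage to $\partial Q$ is precisely what makes (CK2) hold. The homomorphism $\pi$ is $\mathbb{Z}$-graded and satisfies $\pi(e_v)\ne0$ for every $v\in Q^0$ (since $\partial Q$ contains a path with range $v$), so it is injective by the graded uniqueness theorem for Leavitt path algebras. Alternatively, one may invoke the faithful spatial representation constructed in \cite{cortinas rodriguez}, adapted to localizable measures as explained in the introduction. Since $\pi$ is contractive, $\|x\|\ge\|\pi(x)\|>0$ for every nonzero $x\in L_Q$, so $\|\cdot\|$ is a norm and $\mathcal{O}^p(Q)$ is a Banach algebra containing $L_Q$ as a dense subalgebra.

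For the isometric nondegenerate representation I would pick a set $\{\varphi_i\}_{i\in I}$ of nondegenerate representations of $L_Q$ on $L^p$-spaces $\mathcal{E}_i$ of localizable measures that realizes the supremum defining $\|\cdot\|$ (such a set exists by a standard cardinality bound on the spaces one needs to consider), and form $\Phi=\bigoplus_{i\in I}^{\ell^p}\varphi_i$ on $\mathcal{E}=\bigoplus_{i\in I}^{\ell^p}\mathcal{E}_i$. Since an $\ell^p$-direct sum of $L^p$-spaces of localizable measures is again the $L^p$-space of a localizable measure (the disjoint union), and $\|\Phi(x)\|=\sup_i\|\varphi_i(x)\|=\|x\|$ for $x\in L_Q$, the map $\Phi$ extends to an isometric representation of $\mathcal{O}^p(Q)$. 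For nondegeneracy: the finite sums $q_F=\sum_{v\in F}\Phi(e_v)$, over finite $F\subseteq Q^0$, form a contractive approximate identity of idempotents for $\Phi(\mathcal{O}^p(Q))$ --- each $q_F$ is a hermitian idempotent by \autoref{spatialckfam}(i), hence of the form $m_{\mathbbm{1}_{X_F}}$ by \autoref{hermitianequiv} --- and restricting $\Phi$ to the closed essential subspace, which coincides with the $L^p$-space over $X=\bigcup_F X_F$ and is therefore again the $L^p$-space of a localizable measure, yields an isometric nondegenerate representation of $\mathcal{O}^p(Q)$ on the $L^p$-space of a localizable measure. In particular $\mathcal{O}^p(Q)$ is an $L^p$-operator algebra.

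The step I expect to be the real obstacle is the construction of the faithful representation of $L_Q$ used to show $\|\cdot\|$ is a norm: it must simultaneously be spatial, satisfy the Cuntz--Krieger relations --- which forces the bookkeeping with the boundary path space rather than with all finite paths --- and be injective, which forces the tensor twist by the bilateral shift, since the plain boundary path-space representation collapses cycles without entries. Everything else amounts to routine manipulations with spatial partial isometries, $\ell^p$-direct sums, and essential subspaces.
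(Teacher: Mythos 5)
Your proposal is correct in substance, but it deviates from the paper's proof at the two places where real work is needed, so a comparison is in order. For faithfulness, the paper does not build a path-space model from scratch: it invokes the injective representation $L_Q\to\mathcal{B}(\ell^p(X))$ obtained by generalizing the construction of \cite[Proposition 4.23]{cortinas rodriguez} to uncountable graphs along the lines of \cite[Lemma 1.5]{goodearl}. You instead construct an explicit graded boundary-path-space representation twisted by the bilateral shift and deduce injectivity from the (purely algebraic) graded uniqueness theorem for Leavitt path algebras, valid for arbitrary graphs \cite{lpa}; this is self-contained, avoids the countability issue entirely, and your diagnosis of why the shift twist is needed (cycles without entries collapse in the plain boundary-path representation) is exactly right. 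For the isometric nondegenerate representation, the paper first argues, following \cite[Lemma 4.12]{cortinas rodriguez}, that every spatial representation factors through a nondegenerate one, so that the supremum may be taken over nondegenerate representations, and then forms a direct sum indexed by a dense subset of $L_Q$; you instead form the big direct sum first and restore nondegeneracy at the end by compressing to the essential subspace. Both routes work; the paper's buys the factorization once and for all, while yours uses the local units of $L_Q$ (each $x\in L_Q$ satisfies $\varepsilon_S x=x=x\varepsilon_S$ for a finite sum $\varepsilon_S$ of vertex idempotents, and $\|\Phi(\varepsilon_S)\|\le 1$) to see that the compression is isometric.

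Two small wrinkles you should repair. First, as written you claim to choose \emph{nondegenerate} representations realizing the supremum; that the supremum is attained over nondegenerate representations is precisely the nontrivial factorization fact the paper cites, and your ``cardinality bound'' does not address it. The fix is simply to drop the word ``nondegenerate'' at that stage and let your final compression step do the work (as it stands, if the $\varphi_i$ really were nondegenerate, the compression step would be vacuous). Second, identifying the essential subspace with ``$L^p$ over $X=\bigcup_F X_F$'' is too quick when $Q^0$ or the index set is uncountable, since that union need not be measurable; the correct statement is that the essential subspace is the closed span of the pairwise disjoint bands $m_{\mathbbm{1}_{X_{v,i}}}\mathcal{E}_i$, which is isometrically the $L^p$-space of the disjoint union of the restricted (localizable) measures. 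Finally, note that several of your steps (e.g.\ \autoref{hermitianequiv}, spatiality of the generators) are only available for $p\neq 2$; the paper disposes of $p=2$ separately by quoting the known C*-result, and you should do the same.
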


\begin{proof}
Since the result is well-known for $p=2$, we may assume that 
$p\in [1,\infty)\setminus\{2\}$.
Let $X$ be any set with cardinality at least equal to the cardinality of 
$Q$. Then the construction in the proof of \cite[Proposition 4.23]{cortinas rodriguez} can be generalized to uncountable graphs (replacing $\mathbb{N}$ with $X$), along the lines of \cite[Lemma 1.5]{goodearl}, to yield an injective representation $L_Q\rightarrow\mathcal{B}(\ell^p(X))$. It follows that $\|\cdot\|$ is indeed a norm. 

Arguing exactly as in \cite[Lemma 4.12]{cortinas rodriguez}, one can show that each spatial representation of $L_Q$ factors as the composition of a nondegenerate spatial representation and an isometry, so in \autoref{opq} it suffices to take the supremum over nondegenerate representations. Let $D$ be a dense subset of $L_Q$ with respect to the norm on $\mathcal{O}^p(Q)$. For each $d\in D$ and for each $n\in\N$, we may find a localizable measure space $(X_{d,n},\mathcal{B}_{d,n},\mu_{d,n})$ and a nondegenerate representation $\varphi_{d,n}\colon L_Q\rightarrow\mathcal{B}(L^p(\mu_{d,n}))$ of $L_Q$ such that $\Big |\|\varphi_{d,n}(d)\|-\|d\|\Big |<\frac{1}{n}$. Then 
    $$\varphi=\bigoplus_{\substack{d\in D}}\bigoplus_{n\in \N}\varphi_{d,n}\colon L_Q\rightarrow\mathcal{B}\Big(L^p\Big(\bigsqcup_{\substack{d\in D}}\bigsqcup_{n\in \N} \mu_{d,n}\Big)\Big)$$ 
    is a nondegenerate isometric representation of $L_Q$, which may be extended to a nondegenerate isometric representation of $\mathcal{O}^p(Q)$. Note that $\bigsqcup_{n\in \N} \mu_{d,n}$ is localizable.
\end{proof}

\begin{lma}\label{opqapproxid}
    Let $p\in[1,\infty)$ and let $Q$ be a graph. Let $\mathcal{S}$ be the collection of finite subsets of $Q^0$, and for $S\in \mathcal{S}$ let $\varepsilon_S=\sum_{v\in S}e_v$. Then $(\varepsilon_S)_{S\in \mathcal{S}}$ is a contractive approximate identity for $\mathcal{O}^p(Q)$.
\end{lma}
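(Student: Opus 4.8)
The plan is to verify the two defining properties of a contractive approximate identity in turn: that $\|\varepsilon_S\|\le 1$ for every $S\in\mathcal{S}$, and that $\varepsilon_S x\to x$ and $x\varepsilon_S\to x$ along the net $\mathcal{S}$ (directed by inclusion) for every $x\in\mathcal{O}^p(Q)$. Since the statement is classical for $p=2$, assume throughout that $p\in[1,\infty)\setminus\{2\}$.

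For the norm bound, recall from \autoref{opq} that $\|\varepsilon_S\|$ equals the supremum of $\|\varphi(\varepsilon_S)\|$ over all representations $\varphi\colon L_Q\to\mathcal{B}(L^p(\mu))$ of $L_Q$ on $L^p$-spaces. Fix such a $\varphi$. The elements $\varphi(e_v)$, $v\in Q^0$, are pairwise orthogonal hermitian idempotents (being part of a Cuntz-Krieger $Q$-family), so \autoref{hermitianequiv} provides sets $X_v\in\mathcal{B}$ with $\varphi(e_v)=m_{\mathbbm{1}_{X_v}}$, and the relations $e_ve_w=0$ for $v\neq w$ force the $X_v$ to be pairwise disjoint modulo null sets. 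Hence $\varphi(\varepsilon_S)=m_{\mathbbm{1}_{\bigcup_{v\in S}X_v}}$, which is a contraction on $L^p(\mu)$. Taking the supremum over $\varphi$ yields $\|\varepsilon_S\|\le 1$.

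For the approximate identity property, since $L_Q$ is dense in $\mathcal{O}^p(Q)$ by definition and the net $(\varepsilon_S)$ is uniformly bounded by the previous step, a routine approximation argument reduces the claim to showing that for every $x\in L_Q$ one has $\varepsilon_S x=x$ and $x\varepsilon_S=x$ for all sufficiently large $S\in\mathcal{S}$. By linearity it suffices to treat a monomial $m=g_1\cdots g_k$ in the generators $s_a,t_a,e_v$. Using relations~(1) and~(2) of \autoref{leavittpathalg}, to each generator $g$ one can attach a single vertex $v(g)$ — namely $v$ if $g=e_v$, $\ran(a)$ if $g=s_a$, and $\sor(a)$ if $g=t_a$ — such that $\varepsilon_S\,g=g$ whenever $v(g)\in S$; symmetrically, with $w(g)$ equal to $v$, $\sor(a)$, $\ran(a)$ respectively, one has $g\,\varepsilon_S=g$ whenever $w(g)\in S$. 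Consequently $\varepsilon_S m=(\varepsilon_S g_1)g_2\cdots g_k=m$ as soon as $v(g_1)\in S$, and $m\,\varepsilon_S=g_1\cdots g_{k-1}(g_k\varepsilon_S)=m$ as soon as $w(g_k)\in S$. Since every finite set of vertices is contained in some $S\in\mathcal{S}$, both equalities hold for all sufficiently large $S$, and hence so do the corresponding equalities for any finite linear combination of monomials. Finally, for general $x\in\mathcal{O}^p(Q)$ and $\delta>0$, choosing $y\in L_Q$ with $\|x-y\|<\delta$ gives $\|\varepsilon_S x-x\|\le\|\varepsilon_S(x-y)\|+\|\varepsilon_S y-y\|+\|y-x\|<2\delta$ for $S$ large, and similarly $\|x\varepsilon_S-x\|<2\delta$.

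The argument is elementary and I expect no real obstacle; the only step using anything beyond the defining relations is the contractivity $\|\varepsilon_S\|\le 1$, which rests on the identification in \autoref{hermitianequiv} of orthogonal hermitian idempotents on an $L^p$-space with multiplication operators by characteristic functions of pairwise disjoint sets (equivalently, on the fact that a finite sum of such operators is a contraction).
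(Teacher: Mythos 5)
Your proof is correct and follows essentially the same route as the paper: contractivity of $\varepsilon_S$ by observing that under any representation the images of the $e_v$ are pairwise orthogonal hermitian (hence spatial) idempotents, and the approximate identity property by reducing, via density of $L_Q$ and the uniform bound, to the algebraic identities $\varepsilon_S b=b=b\varepsilon_S$ for $b\in L_Q$ and $S$ large. You merely spell out more explicitly than the paper why these identities hold on monomials, which is a harmless (and welcome) elaboration.
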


\begin{proof}
Since the result is known for $p=2$, we may assume that 
$p\in [1,\infty)\setminus\{2\}$.
Let $F$ be the set of representations of $L_Q$ on $L^p$-spaces. For each $\varphi\in F$, the element $\varphi(\varepsilon_S)$ is a sum of pairwise orthogonal spatial idempotents. Hence $\|\varepsilon_S\|=\sup_{\varphi\in F}\|\varphi(\varepsilon_S)\|=1$. Let $a\in \mathcal{O}^p(Q)$ and $\delta>0$ be given. Choose $b\in L_Q$ with $\|a-b\|<\frac{\delta}{2}$, and choose $R\in\mathcal{S}$ such that $\varepsilon_Sb=b=b\varepsilon_S$ for $S\supseteq R$. Then we have $\|\varepsilon_Sa-a\|\leq \|\varepsilon_S a-\varepsilon_S b\|+\|b-a\|<\delta$ for $S\supseteq R$, and similarly $\|a\varepsilon_S-a\|<\delta$.
\end{proof}

\begin{prop}\label{stespatial}
    Let $p\in[1,\infty)$ and let $Q$ be a graph. Then 
    $\mathcal{O}^p(Q)$ is unitizable. Moreover, the generating family $(s,t,e)$ of $\mathcal{O}^p(Q)$ is a Cuntz-Krieger $Q$-family.
\end{prop}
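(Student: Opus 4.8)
The plan is to treat the two assertions separately, and throughout we may assume $p\in[1,\infty)\setminus\{2\}$, since for $p=2$ we have $\mathcal{O}^2(Q)=C^*(Q)$ and both assertions are classical (see \cite{raeburn}). For \emph{unitizability}, one simply assembles what is already available: by \autoref{opqnormproperties}, $\mathcal{O}^p(Q)$ admits a nondegenerate isometric representation on an $L^p$-space, and by \autoref{opqapproxid} it has a contractive approximate identity. Hence, by the remark following \autoref{df:unitizable} (which rests on \cite[Theorem 4.1]{gardella thiel repr}), the multiplier algebra $M(\mathcal{O}^p(Q))$ is an $L^p$-operator algebra; since $\mathcal{O}^p(Q)^+$ carries the norm induced by its inclusion into $M(\mathcal{O}^p(Q))$, it follows that $\mathcal{O}^p(Q)^+$ is an $L^p$-operator algebra, i.e.\ $\mathcal{O}^p(Q)$ is unitizable.

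For the statement that $(s,t,e)$ is a Cuntz-Krieger $Q$-family, note first that the relations (1)--(3) of \autoref{spatialckfam} are among the defining relations of $L_Q$ (\autoref{leavittpathalg}) and therefore hold in $\mathcal{O}^p(Q)$; from them one reads off that each $s_a$ is a partial isometry with reverse $t_a$ (indeed $s_at_as_a=s_ae_{\sor(a)}=s_a$ and $t_as_at_a=e_{\sor(a)}t_a=t_a$), that $\{e_v\}_{v\in Q^0}$ consists of pairwise orthogonal idempotents, and that $s_at_a$ and $t_as_a=e_{\sor(a)}$ are idempotents. The bounds $\|s_a\|\le1$ and $\|t_a\|\le1$ are immediate from \autoref{opq}, since any representation $\varphi$ of $L_Q$ on an $L^p$-space makes $(\varphi(s),\varphi(t),\varphi(e))$ a Cuntz-Krieger family and hence satisfies $\|\varphi(s_a)\|\le1$ and $\|\varphi(t_a)\|\le1$. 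The only genuine point is that $e_v$ and $s_at_a$ are \emph{hermitian} idempotents, that is, hermitian in $\mathcal{O}^p(Q)^+$ with its multiplier norm, in the sense of \autoref{hermitiannonunital}.

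I would verify this by pushing exponentials through all representations. Fix $v\in Q^0$ and $\lambda\in\R$, and put $u=\exp(i\lambda e_v)\in\mathcal{O}^p(Q)^+$. Any representation $\varphi\colon L_Q\to\mathcal{B}(L^p(\mu))$ extends to a contractive homomorphism $\bar\varphi\colon\mathcal{O}^p(Q)\to\mathcal{B}(L^p(\mu))$ and then to a unital homomorphism $\bar\varphi^+$ on $\mathcal{O}^p(Q)^+$, with $\bar\varphi^+(u)=\exp(i\lambda\,\varphi(e_v))$. Since $\varphi(e_v)$ is a hermitian idempotent in $\mathcal{B}(L^p(\mu))$, \autoref{hermitianequiv} identifies it with a spatial idempotent $m_{\mathbbm{1}_E}$ for some measurable $E$, so $\bar\varphi^+(u)=m_g$ with $g=\mathbbm{1}_{E^c}+e^{i\lambda}\mathbbm{1}_E$ unimodular; in particular $\|\bar\varphi^+(u)\|=1$. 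For $b\in\mathcal{O}^p(Q)$ this gives $\|\bar\varphi(ub)\|=\|\bar\varphi^+(u)\bar\varphi(b)\|\le\|\bar\varphi(b)\|$ and, symmetrically, $\|\bar\varphi(bu)\|\le\|\bar\varphi(b)\|$; taking the supremum over all such $\varphi$ and using \autoref{opq} yields $\|ub\|\le\|b\|$ and $\|bu\|\le\|b\|$, whence $\|u\|_{M(\mathcal{O}^p(Q))}\le1$. As $\lambda$ was arbitrary, $e_v$ is hermitian in $\mathcal{O}^p(Q)$, and the same computation with $s_at_a$ in place of $e_v$, using \autoref{spatialckfam}(iii) to know that $\varphi(s_at_a)$ is a hermitian idempotent, completes the verification.

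The only real obstacle here is organizational rather than conceptual: ``hermitian'' must be measured in the multiplier norm on the nonunital algebra $\mathcal{O}^p(Q)^+$, so one must keep careful track of how the representations of $L_Q$ computing $\|\cdot\|_{\mathcal{O}^p(Q)}$ extend to this unitization. Once that is set up, the spatial description of hermitian idempotents in \autoref{hermitianequiv} does all the work, since it reduces $\exp(i\lambda e_v)$ and $\exp(i\lambda s_at_a)$ to multiplication by unimodular functions in every representation.
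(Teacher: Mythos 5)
Your proposal is correct and takes essentially the same route as the paper: unitizability is obtained exactly as in the paper (contractive approximate identity from \autoref{opqapproxid} plus a nondegenerate isometric representation from \autoref{opqnormproperties}, fed into \cite[Theorem~4.1]{gardella thiel repr}), and the algebraic relations and norm bounds are read off from the definitions. The only variation is in checking that $e_v$ and $s_at_a$ are hermitian for the multiplier norm on $\mathcal{O}^p(Q)^+$: the paper pulls hermiticity back along the isometric unital extension $\varphi^+$ of a single isometric representation, while you bound $\|\exp(i\lambda e_v)b\|$ and $\|b\exp(i\lambda e_v)\|$ directly over all representations via the sup-definition of the norm (where one could even skip \autoref{hermitianequiv} and use $\|\exp(i\lambda\varphi(e_v))\|\leq 1$ straight from hermiticity of $\varphi(e_v)$ in $\mathcal{B}(L^p(\mu))$); both verifications are valid and rest on the same ingredients.
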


\begin{proof}
As before, we may assume that $p\in [1,\I)\setminus\{2\}$.
By \autoref{opqapproxid}, $\mathcal{O}^p(Q)$ has a contractive approximate identity. By \autoref{opqnormproperties}, there exist an $L^p$-space $\mathcal{E}$ and a nondegenerate isometric representation of $L_Q$ on $\mathcal{E}$, which extends to an isometric representation $\varphi$ of $\mathcal{O}^p(Q)$. By \cite[Theorem~4.1]{gardella thiel repr}, the induced representation $\varphi^+\colon \mathcal{O}^p(Q)^+\rightarrow\mathcal{B}(\mathcal{E})$ is isometric, and hence $\mathcal{O}^p(Q)$ is unitizable.

For each $a\in Q^1$ we have $\|s_a\|=\|\varphi(s_a)\|=1$ and similarly $\|t_a\|=1$. As $\varphi^+$ is isometric and unital, it preserves hermitian elements, so $e_v$ and $s_at_a$ are hermitian idempotents in $\mathcal{O}^p(Q)$ for each $v\in Q^0$ and for each $a\in Q^1$. 
\end{proof}

\begin{prop}\label{opquniversal}
    Let $p\in[1,\infty)$ and let $Q$ be a graph. The graph $L^p$-operator algebra $\mathcal{O}^p(Q)$ generated by the Cuntz-Krieger $Q$-family $(s,t,e)$ has the following universal property: for any Cuntz-Krieger $Q$-family $(S,T,E)$ in an $L^p$-operator algebra $A$, there is a (unique) contractive homomorphism $\pi\colon \mathcal{O}^p(Q)\rightarrow A$ satisfying $\pi(s_a)=S_a,\ \pi(t_a)=T_a$ and $\pi(e_v)=E_v$ for all $a\in Q^1$ and for all $v\in Q^0$.
\end{prop}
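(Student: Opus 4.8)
The plan is to deduce the universal property of $\mathcal{O}^p(Q)$ from the universal property of the Leavitt path algebra $L_Q$ (Definition \ref{leavittpathalg}) together with the way the norm on $\mathcal{O}^p(Q)$ is defined (Definition \ref{opq}). So let $(S,T,E)$ be a Cuntz-Krieger $Q$-family in an $L^p$-operator algebra $A$. First I would reduce to the case where $A = \mathcal{B}(L^p(\mu))$ for a localizable measure space $(X,\mathcal{B},\mu)$: since $A$ is unitizable, apply Lemma \ref{unitalrep} and \cite[Proposition~2.6]{choi gardella thiel} (exactly as in Remark \ref{propertiesspatialfam}) to get a unital isometric representation $\psi\colon A^+ \to \mathcal{B}(L^p(\mu))$. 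It suffices to produce a contractive homomorphism into $\mathcal{B}(L^p(\mu))$ with the required values on the generators, since $\psi$ is isometric onto its image and $A$ is identified with $\psi(A)$.

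Next I would check that $(S,T,E)$ genuinely satisfies the defining relations of $L_Q$ as listed in Definition \ref{leavittpathalg}. Relations (1), (2), (CK1), (CK2) are literally part of the definition of a Cuntz-Krieger $Q$-family (Definition \ref{spatialckfam}), so there is nothing to prove beyond matching notation; the only point requiring a remark is that the sum in (CK2) over $\ran^{-1}(v)$, when $v$ is a regular vertex, is a finite sum, so it makes sense purely algebraically. By the universal property of $L_Q$ there is therefore a (unique) algebra homomorphism $\pi_0\colon L_Q \to A$ with $\pi_0(s_a)=S_a$, $\pi_0(t_a)=T_a$, $\pi_0(e_v)=E_v$.

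It remains to show $\pi_0$ is contractive for the norm on $\mathcal{O}^p(Q)$, so that it extends to the completion. For this I would compose $\pi_0$ with the isometric embedding $A \hookrightarrow \mathcal{B}(L^p(\mu))$ obtained above; call the result $\varphi\colon L_Q \to \mathcal{B}(L^p(\mu))$. Then $\varphi$ is an algebra homomorphism such that $(\varphi(s),\varphi(t),\varphi(e))$ is a Cuntz-Krieger $Q$-family (the hermitian-idempotent and partial-isometry conditions of Definition \ref{spatialckfam} are preserved because $\psi$ is unital contractive, hence preserves hermitian elements by Remark \ref{preservehermitian}, and plainly preserves partial isometries and their reverses). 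Thus $\varphi$ is a representation of $L_Q$ on an $L^p$-space in the sense of Definition \ref{lpspatialrep}. By the very definition of $\|\cdot\|$ in Definition \ref{opq} as a supremum over all such representations, we get $\|\varphi(x)\| \le \|x\|$ for every $x \in L_Q$, i.e. $\|\pi_0(x)\|_A = \|\varphi(x)\| \le \|x\|$. Hence $\pi_0$ is contractive and extends uniquely to a contractive homomorphism $\pi\colon \mathcal{O}^p(Q) \to A$; uniqueness on all of $\mathcal{O}^p(Q)$ follows since $L_Q$ is dense and $\pi$ is determined on the generators.

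I do not expect a serious obstacle here: the statement is essentially a bookkeeping consequence of the definitions. The only mild subtlety is making sure the isometric-embedding step is legitimate for a general unitizable $A$ (as opposed to one already given on an $L^p$-space), which is handled by Lemma \ref{unitalrep} and the realization of the ambient $L^p$-space via a localizable measure; and noting that $\psi$ restricted to $A$ need not be unital but is still isometric on $A$, which is all that is needed.
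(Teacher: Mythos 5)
Your proposal is correct and follows essentially the same route as the paper's proof: both take a unital isometric representation of $A^+$ on an $L^p$-space (via \autoref{unitalrep}), observe that the image of the Cuntz--Krieger family is again a Cuntz--Krieger family because unital contractive maps preserve hermitian elements, and then obtain contractivity of the induced map on $L_Q$ directly from the supremum defining the norm in \autoref{opq}, before extending to the completion. The only cosmetic difference is that you keep the homomorphism valued in $A$ and use the embedding solely for norm estimates, while the paper extends the spatial representation to $\mathcal{O}^p(Q)$ and then composes with $\rho^{-1}$; the content is identical.
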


\begin{proof}
    Choose an $L^p$-space $\mathcal{E}$ and an isometric representation $\rho^+\colon A^+\rightarrow\mathcal{B}(\mathcal{E})$, and let $\rho=\rho^+|_A$. By \autoref{unitalrep}, we may choose $\rho^+$ to be unital, so that it preserves hermitian elements. Then $(\rho(S),\rho(T),\rho(E))$ is a Cuntz-Krieger $Q$-family in $\mathcal{B}(\mathcal{E})$. Define a spatial representation $\varphi\colon L_Q\rightarrow\mathcal{B}(\mathcal{E})$ by $\varphi(e_v)= \rho(E_v),\ \varphi(s_a)= \rho(S_a)$ and $\varphi(t_a)=\rho(T_a)$ for all $v\in Q^0$ and for all $a\in Q^1$. Extend $\varphi$ continuously to a representation of $\mathcal{O}^p(Q)$. As this representation is contractive, we have $\varphi(\mathcal{O}^p(Q))\subseteq\overline{\text{span}}\{\rho(S),\rho(T),\rho(E)\}\subseteq\text{Im}(\rho)$. Then $\rho^{-1}\circ\varphi\colon \mathcal{O}^p(Q)\rightarrow A$ is the desired contractive homomorphism.
\end{proof}

\begin{rem}
    Let $p\in[1,\infty)$ and let $(X,\mathcal{B},\mu)$ be a localizable measure space. If $\varphi\colon\mathcal{O}^p(Q)\rightarrow \mathcal{B}(L^p(\mu))$ is a nondegenerate representation, then $\varphi^+\colon\mathcal{O}^p(Q)^+\rightarrow \mathcal{B}(L^p(\mu))$ is unital and contractive and so preserves hermitian elements. Hence, $\varphi$ restricts to a spatial representation of $L_Q$. This provides a partial converse to the above proposition. However, the assumption of nondegeneracy is necessary, as shown in \cite[Remark 6.5]{cortinas rodriguez}.
\end{rem}

\begin{cor}\label{uniquenessiso}
    Let $p\in[1,\infty)$, let $Q$ be a graph and let $A$ be an $L^p$-operator algebra generated by a Cuntz-Krieger $Q$-family $(S,T,E)$. If $A$ has the universal property described in \autoref{opquniversal}, then the canonical homomorphism $\pi\colon \mathcal{O}^p(Q)\rightarrow A$ is an isometric isomorphism.
\end{cor}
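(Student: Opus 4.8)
The plan is to run the standard argument that two $L^p$-operator algebras carrying the same universal property are canonically isometrically isomorphic, and then to upgrade the resulting isomorphism to an isometry using that every map in sight is contractive.

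First I would observe that the hypotheses place $\mathcal{O}^p(Q)$ and $A$ on equal footing. By \autoref{opqnormproperties}, $\mathcal{O}^p(Q)$ is an $L^p$-operator algebra, and by \autoref{stespatial} its generating family $(s,t,e)$ is a Cuntz-Krieger $Q$-family. Applying the universal property of $A$ to the Cuntz-Krieger $Q$-family $(s,t,e)$ in the $L^p$-operator algebra $\mathcal{O}^p(Q)$, I obtain a contractive homomorphism $\rho\colon A\to\mathcal{O}^p(Q)$ with $\rho(S_a)=s_a$, $\rho(T_a)=t_a$ and $\rho(E_v)=e_v$ for all $a\in Q^1$ and all $v\in Q^0$. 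On the other hand, $\pi\colon\mathcal{O}^p(Q)\to A$ is the contractive homomorphism supplied by \autoref{opquniversal}, with $\pi(s_a)=S_a$, $\pi(t_a)=T_a$, $\pi(e_v)=E_v$. The next step is to show that $\rho$ and $\pi$ are mutually inverse: the composition $\rho\circ\pi\colon\mathcal{O}^p(Q)\to\mathcal{O}^p(Q)$ is a contractive homomorphism fixing each of $s_a$, $t_a$, $e_v$, and since $\id_{\mathcal{O}^p(Q)}$ is another such homomorphism, the uniqueness clause of \autoref{opquniversal} applied to the Cuntz-Krieger $Q$-family $(s,t,e)$ forces $\rho\circ\pi=\id_{\mathcal{O}^p(Q)}$. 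Symmetrically, $\pi\circ\rho\colon A\to A$ is a contractive homomorphism fixing $S_a$, $T_a$, $E_v$, so the uniqueness part of the universal property of $A$, applied to $(S,T,E)$, gives $\pi\circ\rho=\id_A$. Hence $\pi$ is a bijective homomorphism with contractive inverse $\rho$.

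Finally, since both $\pi$ and $\rho$ are contractive, for every $x\in\mathcal{O}^p(Q)$ we have $\|x\|=\|\rho(\pi(x))\|\le\|\pi(x)\|\le\|x\|$, so $\pi$ is isometric and therefore an isometric isomorphism. (Alternatively, once $\rho\circ\pi=\id$ is known, $\pi$ is an isometric embedding with closed range, and it has dense range because $A$ is generated by $(S,T,E)=(\pi(s),\pi(t),\pi(e))$ while $\mathcal{O}^p(Q)$ is generated by $(s,t,e)$; this also yields surjectivity.) I do not expect a genuine obstacle here; the only point requiring (minor) care is to verify that the auxiliary maps $\rho\circ\pi$ and $\pi\circ\rho$ are honest contractive homomorphisms carrying generators to generators, so that the two uniqueness statements apply.
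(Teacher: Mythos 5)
Your argument is correct and is essentially the paper's proof: obtain the reverse contractive homomorphism from the universal property of $A$ applied to the family $(s,t,e)$ (via \autoref{stespatial}), check on generators that the composite with $\pi$ is the identity (uniqueness/density), and conclude isometry from contractivity of both maps and surjectivity from generation. The only cosmetic difference is that you also verify $\pi\circ\rho=\mathrm{id}_A$ via the uniqueness clause, while the paper instead concludes surjectivity from density of the image, a variant you note yourself parenthetically.
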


\begin{proof}
    By \autoref{stespatial} and \autoref{opquniversal}, there are contractive homomorphisms $\pi\colon\mathcal{O}^p(Q)\rightarrow A$ and $\varphi\colon A\rightarrow\mathcal{O}^p(Q)$ such that $\pi(s_a)=S_a$ and $\varphi(S_a)=s_a$, and similarly for the $t_a$ and the $e_v$. Clearly the images of $\pi$ and $\varphi$ are dense. Since $\varphi\circ\pi$ and the identity map agree on generators, the identity $\varphi\circ\pi=\text{id}_{\mathcal{O}^p(Q)}$ holds by continuity on $\text{Im}(\varphi\circ\pi)\subseteq\mathcal{O}^p(Q)$. Since both $\varphi$ and $\pi$ are contractive, we see that $\pi$ is isometric on the dense subset $\text{Im}(\varphi\circ\pi)$, and hence on $\mathcal{O}^p(Q)$.
\end{proof}

\section{Spatial $L^p$-AF-algebras and acyclic graphs}

In this section, we recall the definition of a spatial $L^p$-AF-algebras (as defined by Phillips and Viola in \cite[Section 9]{phillips viola}), and show that contractive, injective homomorphisms out of such algebras are automatically isometric. 
Most of the work goes into proving that $\mathcal{O}^p(Q)$ is a spatial AF-algebra whenever $Q$ is acyclic; see \autoref{acyclicaf}. 

As we will see in the following section, the $L^p$-operator graph algebra of a regular graph with no 
infinite emitters and no sinks contains a 
distinguished subalgebra which is spatially AF, namely, the fixed point algebra associated to its gauge action; see \autoref{isocore}.

\subsection{Infinite direct sums of $L^p$-operator algebras}
In this subsection, we collect some elementary facts about infinite direct sums of 
$L^p$-operator algebras. The following definition is standard.

\begin{df}
    Let $p\in[1,\infty)$, and let $(A_i)_{i\in I}$ be a family of $L^p$-operator algebras. The \emph{direct sum} $\bigoplus_{i\in I} A_i$ is the set of elements $(a_i)_{i\in I}\in\prod_{i\in I} A_i$ satisfying that for any $\epsilon>0$ there exists a finite subset $J\subseteq I$ such that $\|a_i\|<\epsilon$ whenever $i\in I\setminus J$.
    The direct sum is equipped with componentwise operations and with norm $\|(a_i)_{i\in I}\|=\sup_{i\in I}\|a_i\|$.
\end{df}

In the proof of the following lemma, we will use \cite[Lemma 6.15]{phillips viola} in an algebra $A$
which is not necessarily separable. For this, we observe that the proof of \cite[Lemma 6.15]{phillips viola} also goes through if $A$ is not assumed to be separable\footnote{The role of separability in the work of Phillips-Viola is to allow one to assume that the $L^p$-operator algebra in question can be represented on a \emph{$\sigma$-finite} measure space, so that Lamperti's theorem for $\sigma$-finite measures can be applied. However, since Lamperti's theorem also holds for localizable measures, and \emph{every} $L^p$-space can be realized by a localizable measure, this allows us to remove the separability assumption.}, by applying \autoref{hermitianequiv} instead of \cite[Lemma 6.9]{phillips viola}.

\begin{lma}\label{directsum}
    Let $p\in[1,\infty)$ and let $A$ be a unitizable $L^p$-operator algebra. Let $(e_i)_{i\in I}$ be a family of pairwise orthogonal hermitian idempotents in $A$, and set $A_i=e_iAe_e$ for all $i\in I$. Then $\overline{\sum_{i\in I} A_i}$ is isometrically isomorphic to $\bigoplus_{i\in I} e_iAe_i$.
\end{lma}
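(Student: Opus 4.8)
The plan is to realize the isomorphism by extending, from the dense algebraic sum, the evident map sending a finite sum $\sum_{i\in F}a_i$ with $a_i\in A_i$ to the tuple $(a_i)_{i\in I}$. As usual we may assume $p\in[1,\infty)\setminus\{2\}$, the case $p=2$ being standard. Note first that each $A_i=e_iAe_i=\{a\in A:e_ia=a=ae_i\}$ is a closed subalgebra of $A$, hence an $L^p$-operator algebra, so that $\bigoplus_{i\in I}A_i$ is defined. The crucial analytic input is the following norm identity: for every finite $F\subseteq I$ and all $a_i\in A_i$,
\[
\Big\|\sum_{i\in F}a_i\Big\|=\max_{i\in F}\|a_i\|.
\]
This is \cite[Lemma 6.15]{phillips viola}; as explained in the paragraph preceding the statement, its proof goes through for not necessarily separable $A$ once one invokes \autoref{hermitianequiv} in place of \cite[Lemma 6.9]{phillips viola}. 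The idea is that after representing $A^+$ isometrically and unitally on the $L^p$-space of a localizable measure, \autoref{preservehermitian} and \autoref{hermitianequiv} turn the pairwise orthogonal idempotents $e_i$ into multiplication operators by indicators of pairwise disjoint measurable sets, so that $\sum_{i\in F}a_i$ acts block-diagonally along the corresponding $\ell^p$-decomposition of the space, and its norm is therefore the maximum of the norms of the blocks.

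Granting the displayed identity, the rest is bookkeeping. The algebraic sum $\sum_{i\in I}A_i\subseteq A$ is an internal direct sum: if $\sum_{i\in F}a_i=0$ with $a_i\in A_i$, then multiplying on the left by $e_j$ yields $a_j=0$. Hence the map $\Psi_0\colon\sum_{i\in I}A_i\to\bigoplus_{i\in I}A_i$ sending $\sum_{i\in F}a_i$ to $(a_i)_{i\in I}$ (with $a_i=0$ for $i\notin F$) is well defined; it is clearly linear, and it is multiplicative since $A_iA_j=0$ for $i\neq j$, so that $\big(\sum_i a_i\big)\big(\sum_j b_j\big)=\sum_i a_ib_i$. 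The displayed norm identity says precisely that $\Psi_0$ is isometric, and its image is the set of finitely supported tuples, which is dense in $\bigoplus_{i\in I}A_i$ by definition of the direct sum. Consequently $\Psi_0$ extends to an isometric homomorphism $\Psi\colon\overline{\sum_{i\in I}A_i}\to\bigoplus_{i\in I}A_i$; being isometric, $\Psi$ has closed image, and this image is dense, hence all of $\bigoplus_{i\in I}A_i$. Thus $\Psi$ is the desired isometric isomorphism.

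The only nontrivial ingredient is the finite norm identity, which is imported from \cite[Lemma 6.15]{phillips viola}; well-definedness, multiplicativity, density of finitely supported tuples, and extension by continuity are all routine. The single point requiring care is the passage beyond the separable setting, but this is handled exactly as indicated before the statement, by replacing the $\sigma$-finite, Lamperti-based lemma of Phillips and Viola with \autoref{hermitianequiv}.
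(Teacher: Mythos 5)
Your proposal is correct and follows essentially the same route as the paper: both reduce to the finite-sum norm identity $\|\sum_{i\in F}a_i\|=\max_{i\in F}\|a_i\|$ coming from \cite[Lemma 6.15]{phillips viola} (equivalently, the block-diagonal picture in a unital isometric representation of $A^+$ on a localizable $L^p$-space, via \autoref{preservehermitian} and \autoref{hermitianequiv}), and then pass to the limit over finite subsets by density. The only cosmetic differences are that the paper verifies the hypotheses of \cite[Lemma 6.15]{phillips viola} inside the finite corner $\sum_{i\in S}A_i$ (hermitian-ness of the $e_i$ there via \cite[Lemma 2.2]{choi gardella thiel}, plus centrality), whereas you obtain the norm identity directly from the spatial picture, and that the paper constructs the isometry in the opposite direction, from $\bigoplus_{i\in I}A_i$ into $\overline{\sum_{i\in I}A_i}$.
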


\begin{proof}
Set $A_e=\overline{\sum_{i\in i} A_i}$, and let $\mathcal{S}$ be the set of finite subsets of $I$.
For $S\in\mathcal{S}$, set $B_S=\bigoplus_{i\in S} A_i$ and set $C_S=\sum_{i\in S} A_i$. Then $C_S$ is a unital $L^p$-operator algebra with $1_{C_S}=\sum_{i\in S}e_i$. By \cite[Lemma 2.2]{choi gardella thiel}, for $i\in S$ the idempotent $e_i$ is hermitian in $C_S$. Moreover, since these idempotents are pairwise orthogonal, each $e_i$ belongs to the center of $C_S$. By \cite[Lemma 6.15]{phillips viola}, the map $\phi_S\colon B_S\rightarrow C_S$ given by $(a_j)_{j\in S}^n\mapsto\sum_{j\in S}^n a_j$ is an isometric isomorphism. Let $\iota_S$ be the inclusion $C_S\hookrightarrow A_e$ and set $\psi_S=\iota_S\circ\phi_S$. Then also $\psi_S$ is an isometry. We have $\bigoplus_{i\in I} A_n=\overline{\bigcup_{S\in\mathcal{S}}B_S}$ and $B_R\subseteq B_S$ whenever $R\subseteq S$. As $\psi_S|_{B_R}=\psi_R$ in this case, we can define an isometric homomorphism $\psi\colon\bigoplus_{i\in I}A_i\rightarrow A_e$ by $\psi|_{B_S}=\psi_S$ for all $S\in\mathcal{S}$. Since $\text{Im}(\psi)$ is dense in $A_e$, it follows that $\psi$ is an isometric isomorphism.
\end{proof}

The following corollary is immediate, and is stated for reference.

\begin{cor}\label{homdirectsum}
    Let $p\in[1,\infty)$, let $(A_i)_{i\in I}$ be a family of unital $L^p$-operator algebras, and let $B$ be a unitizable $L^p$-operator algebra. Set $A=\bigoplus_{i\in I} A_n$, let $\iota_i\colon A_i\rightarrow A$ be the inclusion map and let $\varphi\colon A\rightarrow B$ be a contractive homomorphism. Suppose that the idempotent $e_i=\varphi(\iota_i(1_{A_i}))$ is hermitian in $B$ and that the map $\varphi\circ\iota_i$ is an isometry for all $i\in I$. Then $\varphi$ is an isometry.
\end{cor}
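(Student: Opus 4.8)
The plan is to deduce this directly from \autoref{directsum}, together with the elementary fact that a direct sum of isometric maps is an isometry for the supremum norm.

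First I would note that the elements $q_i:=\iota_i(1_{A_i})\in A$ are pairwise orthogonal idempotents, hence so are their images $e_i=\varphi(q_i)\in B$; by hypothesis these are moreover hermitian. Thus \autoref{directsum} applies to the family $(e_i)_{i\in I}$ in the unitizable $L^p$-operator algebra $B$ and provides an isometric isomorphism $\psi\colon\bigoplus_{i\in I}e_iBe_i\to\overline{\sum_{i\in I}e_iBe_i}$ which on each finite partial sum $\bigoplus_{i\in S}e_iBe_i$ is the summation map $(b_i)_{i\in S}\mapsto\sum_{i\in S}b_i$.

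Next I would check that $\varphi$ factors through $\overline{\sum_{i\in I}e_iBe_i}$ and, after applying $\psi^{-1}$, becomes the direct sum of the maps $\varphi\circ\iota_i$. For $a\in A_i$ we have $\iota_i(a)=q_i\,\iota_i(a)\,q_i$ in $A$, so $\varphi(\iota_i(a))=e_i\varphi(\iota_i(a))e_i\in e_iBe_i$; hence $\varphi\circ\iota_i$ maps $A_i$ into $e_iBe_i$, and it is an isometry by hypothesis. Since every element of $A=\bigoplus_{i\in I}A_i$ is a norm-limit of finite sums $\sum_{i\in S}\iota_i(a_i)$ and $\varphi$ is contractive, $\varphi$ maps $A$ into $\overline{\sum_{i\in I}e_iBe_i}$, and the identity $\psi^{-1}\circ\varphi=\bigoplus_{i\in I}(\varphi\circ\iota_i)$ holds; one verifies it first on finite sums, where it reduces to the description of $\psi$ above, and then extends it by continuity. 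As $\bigoplus_{i\in I}(\varphi\circ\iota_i)$ is an isometry (the direct-sum norm is the supremum norm, and each summand preserves norms) and $\psi$ is an isometric isomorphism, it follows that $\varphi$ is an isometry.

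I do not expect a genuine obstacle here; this is why the statement is recorded as a corollary. The only points needing a little care are bookkeeping: that $\psi^{-1}\circ\varphi$ really lands in $\bigoplus_{i\in I}e_iBe_i$ rather than in its closure (which follows from $(a_i)_i\in\bigoplus_{i\in I}A_i$ since $\|\varphi(\iota_i(a_i))\|=\|a_i\|$), and that the identity of continuous maps $\psi^{-1}\circ\varphi=\bigoplus_{i\in I}(\varphi\circ\iota_i)$ may be checked on the dense subspace of finite sums.
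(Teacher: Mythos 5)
Your argument is correct and is essentially the paper's proof: the paper likewise invokes \autoref{directsum} for the hermitian idempotents $e_i$ to decompose $\overline{\varphi(A)}$ as $\bigoplus_{i\in I}\varphi(\iota_i(A_i))$ and then observes that this exhibits $\varphi$ as a diagonal (coordinatewise isometric) operator, hence an isometry. Your write-up simply fills in the bookkeeping behind that last sentence, working with $e_iBe_i$ instead of $e_i\varphi(A)e_i$, which changes nothing.
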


\begin{proof}
    For $i\in I$, set $B_i=\varphi(\iota_i(A_i))=e_i \varphi(A)e_i$. By \autoref{directsum}, we obtain a decomposition $\overline{\varphi(A)}=\bigoplus_{i\in I} B_i$. This decomposition exhibits $\varphi$ as a diagonal operator, which implies that it is isometric.
\end{proof}

\subsection{Spatial AF-algebras}
For $n\in\N$, we define $\ell^p_n=\ell^p(\{1,\dots,n\})$. Then the algebra $M_n$ of $n\times n$ matrices is canonically isomorphic, as a complex algebra, to $\B(\ell^p_n)$. The norm on $M_n$ induced by this identification is called the \emph{spatial ($L^p$-operator) norm} on $M_n$. The Banach algebra $M_n$ endowed with this norm will be denoted by $M_n^p$. For $j,k=1,\dots,n$, we let $e_{j,k}$ denote the corresponding matrix unit in $M_n^p$. Note that $e_{j,j}=m_{\mathbbm{1}_{\{j\}}}$ is a spatial idempotent, and that $e_{j,k}$ is a spatial partial isometry for each $j,k=1,\dots,n$. 

By a result of Phillips (\cite[Theorem 7.2]{phillips}), the spatial norm on $M_n$ is in fact minimal among all $L^p$-operator norms, in the sense that any nonzero, contractive homomorphism from $M_n^p$ into an $L^p$-operator 
algebra is automatically isometric. We isolate a consequence of this in our setting in the 
following lemma.

\begin{lma}\label{matrixiso}
Let $p\in[1,\infty)$, let $Q$ be a graph, let $n\in\N$ and let $\alpha_1,\dots,\alpha_n$ be paths in $Q$ such that $\{s_{\alpha_j}t_{\alpha_k}\}_{1\leq j,k\leq n}$ is a family of matrix units in $\mathcal{O}^p(Q)$. Then the homomorphism $\eta\colon M_n^p\rightarrow \text{span}\{s_{\alpha_j}t_{\alpha_k}\}_{1\leq j,k\leq n}$ determined by $\eta(e_{j,k})= s_{\alpha_j} t_{\alpha_k}$, for all $j,k=1,\ldots,n$, is an isometry.
\end{lma}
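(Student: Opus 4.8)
The plan is to invoke the minimality of the spatial norm on $M_n^p$ from \cite[Theorem 7.2]{phillips}: since that result says any nonzero contractive homomorphism out of $M_n^p$ into an $L^p$-operator algebra is isometric, it suffices to check that the map $\eta$ in the statement is (i) a well-defined homomorphism, (ii) contractive, and (iii) nonzero. Well-definedness is immediate from the universal property of $M_n$ as a complex algebra: the family $\{s_{\alpha_j}t_{\alpha_k}\}_{1\le j,k\le n}$ is assumed to be a system of matrix units, i.e. it satisfies $(s_{\alpha_j}t_{\alpha_k})(s_{\alpha_\ell}t_{\alpha_m}) = \delta_{k,\ell}\, s_{\alpha_j}t_{\alpha_m}$, so the assignment $e_{j,k}\mapsto s_{\alpha_j}t_{\alpha_k}$ extends uniquely to an algebra homomorphism $M_n \to \mathcal{O}^p(Q)$, whose image is exactly $\mathrm{span}\{s_{\alpha_j}t_{\alpha_k}\}$. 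It is nonzero because, for instance, $\eta(e_{1,1}) = s_{\alpha_1}t_{\alpha_1}$ is a nonzero idempotent (it is a hermitian idempotent in $\mathcal{O}^p(Q)$ by \autoref{stespatial} and \autoref{orthogonal}, and it is nonzero since $t_{\alpha_1}s_{\alpha_1} = e_{\sor(\alpha_1)}$ is a nonzero idempotent, these vertex idempotents being nonzero in $\mathcal{O}^p(Q)$ by the construction of the norm).

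The one genuine point to verify is contractivity of $\eta$ with respect to the spatial norm on $M_n^p$, and this I expect to be the main (and essentially only) obstacle. I would argue as follows. Write $x = \sum_{j,k} \lambda_{j,k} e_{j,k} \in M_n^p$; we must show $\|\sum_{j,k}\lambda_{j,k} s_{\alpha_j} t_{\alpha_k}\| \le \|x\|_{M_n^p}$. Fix a unital isometric representation $\varphi^+$ of $\mathcal{O}^p(Q)^+$ on the $L^p$-space $\mathcal{E}$ of a localizable measure (using \autoref{opqnormproperties} and \cite[Proposition~2.6]{choi gardella thiel}), so that it suffices to bound $\|\sum_{j,k}\lambda_{j,k}\varphi(s_{\alpha_j})\varphi(t_{\alpha_k})\|$. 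Since $\{\alpha_1,\dots,\alpha_n\}$ must be pairwise incomparable for $\{s_{\alpha_j}t_{\alpha_k}\}$ to be a matrix unit system (if $\alpha_j$ and $\alpha_k$ were comparable and distinct, then $s_{\alpha_j}t_{\alpha_j}$ and $s_{\alpha_k}t_{\alpha_k}$ would fail to be orthogonal, contradicting $e_{j,j}e_{k,k}=0$), \autoref{orthogonal}(3) and \autoref{propertiesspatialfam} give us disjoint sets $X_{\alpha_1},\dots,X_{\alpha_n}\in\mathcal{B}$ with $\varphi(s_{\alpha_j})$ a spatial partial isometry implementing an isometric isomorphism $L^p(\mu_{X_{\sor(\alpha_j)}}) \to L^p(\mu_{X_{\alpha_j}})$, and $\varphi(s_{\alpha_j}t_{\alpha_j}) = m_{\mathbbm{1}_{X_{\alpha_j}}}$. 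One then checks directly, using the disjointness of the $X_{\alpha_j}$ and the matrix-unit relations, that $\sum_{j,k}\lambda_{j,k}\varphi(s_{\alpha_j})\varphi(t_{\alpha_k})$ acts on $L^p(\mathcal{E})$ as a ``block matrix'' supported on $\bigsqcup_j X_{\alpha_j}$ whose $(j,k)$ block is $\lambda_{j,k}$ times the fixed isometric isomorphism $L^p(\mu_{X_{\alpha_k}}) \to L^p(\mu_{X_{\alpha_j}})$ (composed through the common fiber), and hence has norm at most $\|x\|_{M_n^p}$, the operator norm of $(\lambda_{j,k})$ acting on $\ell^p_n$.

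An alternative, perhaps cleaner, route for contractivity is to exhibit $\mathrm{span}\{s_{\alpha_j}t_{\alpha_k}\}$ as a subalgebra of $\mathcal{O}^p(Q)$ on which the norm is dominated by the spatial $M_n^p$-norm by a direct computation as above, which amounts to the same estimate. Either way, once contractivity is in hand, \cite[Theorem 7.2]{phillips} upgrades $\eta$ from contractive to isometric, completing the proof. I do not anticipate any difficulty beyond bookkeeping with the spatial systems; the incomparability of the paths $\alpha_j$ (forced by the matrix-unit hypothesis) is the key structural fact that makes the block-diagonal picture work.
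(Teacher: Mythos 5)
Your proposal is correct, but it routes through Phillips' Theorem 7.2 differently than the paper does, and the difference is worth noting. The paper composes $\eta$ with an isometric, spatial, nondegenerate representation $\varphi$ of $\mathcal{O}^p(Q)$ (from \autoref{opqnormproperties}) and then invokes the equivalence of conditions (2) and (3) in \cite[Theorem 7.2]{phillips}: since each $\varphi(s_{\alpha_j}t_{\alpha_k})$ is a spatial partial isometry with reverse $\varphi(s_{\alpha_k}t_{\alpha_j})$ (products of spatial partial isometries are spatial), the norm induced on $M_n$ by $\varphi\circ\eta$ \emph{is} the spatial norm, so no norm estimate is needed at all; both inequalities come for free from Phillips' characterization. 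You instead use the minimality form of the same theorem (a nonzero contractive homomorphism out of $M_n^p$ is isometric), which obliges you to prove contractivity by hand. Your block picture is the right one: the matrix-unit relations do force the $\alpha_j$ to be pairwise incomparable and (a point you leave implicit, hidden in the phrase ``common fiber'') to have a common source $v$, so that after the identifications $L^p(\mu_{X_{\alpha_j}})\cong L^p(\mu_{X_v})$ implemented by the $\varphi(s_{\alpha_j})$, the operator $\sum_{j,k}\lambda_{j,k}\varphi(s_{\alpha_j}t_{\alpha_k})$ vanishes on the complement of $\bigsqcup_j X_{\alpha_j}$ and becomes the amplification $(\lambda_{j,k})\otimes\mathrm{id}_{L^p(\mu_{X_v})}$ acting on $\ell^p_n\big(L^p(\mu_{X_v})\big)$. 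To close the argument you must then cite (or prove) that such an amplification has norm exactly $\|(\lambda_{j,k})\|_{\mathcal{B}(\ell^p_n)}$ --- a standard Fubini-type fact for $L^p$-spaces, but it is the crux of your estimate and is left at the level of ``one checks.'' Two further remarks: carried out in full, your computation yields the norm equality outright (since $X_v\neq\emptyset$), making the appeal to Phillips redundant; and your spatial bookkeeping via \autoref{orthogonal} and \autoref{propertiesspatialfam} is only available for $p\neq 2$, so the case $p=2$ should be dispatched separately by the usual C*-argument, as the paper does elsewhere. These are gaps of presentation rather than substance; what your approach costs in extra computation, it repays by making the geometric mechanism behind the isometry explicit, whereas the paper's proof is shorter because Phillips' spatial-partial-isometry criterion identifies the induced norm directly.
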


\begin{proof}
It is clear that the formula above defines an 
algebra homomorphism $\eta\colon M_n \to L_Q$. 
By \autoref{opqnormproperties}, there exist an $L^p$-space $\mathcal{E}$ and a spatial, nondegenerate representation $\varphi\colon L_Q\rightarrow\mathcal{B}(\mathcal{E})$ which extends to an isometric representation of $\mathcal{O}^p(Q)$. Set $\rho=\varphi\circ\eta\colon M_n\to \mathcal{B}(\mathcal{E})$.
We need to show that the spatial norm on 
$M_n$ agrees with the norm induced by $\rho$. 
By the equivalence between (2) and (3) in \cite[Theorem 7.2]{phillips}, it suffices
to show that for each $j,k=1,\ldots,n$, the element $\rho(e_{j,k})$ is a spatial partial isometry with reverse $\rho(e_{k,j})$. Since $\rho(e_{j,k})=\varphi(s_{\alpha_j}t_{\alpha_k})$ and $\varphi$ is a spatial representation of $L_Q$, the result follows.
\end{proof}

We now recall the definition of a spatial $L^p$-AF-operator algebra from \cite{phillips viola}, in a slightly more general form (since we allow arbitrary direct limits, not just over the natural numbers). For $p=2$, the definition agrees with the usual notion of an AF-algebra. 

\begin{df}
    Let $p\in[1,\infty)$. A \emph{spatial $L^p$-AF-direct system} is a direct system $((A_i)_{i\in I},(\varphi_{j,i})_{i\leq j})$ where for every $i\in I$, the algebra $A_i$ is a finite direct sum of matrix algebras endowed with the spatial norm, and for all $i,j\in I$ with $i\leq j$, the homomorphism $\varphi_{j,i}\colon A_i\rightarrow A_j$ is contractive and takes the identity of each of the matrix algebras constituting $A_i$ to a hermitian idempotent in $A_j$. A \emph{spatial $L^p$-AF-algebra} is a Banach algebra which is isometrically isomorphic to the direct limit of a spatial $L^p$-AF-direct system. 
\end{df}

It is well known that an injective, contractive homomorphism between C*-algebras is automatically isometric. For general $L^p$-operator algebras, the corresponding statement is false, and this is related to the lack of uniqueness of $L^p$-operator norms, for $p\neq 2$. 
This phenomenon makes the analytical study of $L^p$-operator algebras often very complicated: for example, we do not have a structure result for finite-dimensional $L^p$-operator algebras.
The following notion, originally due to Phillips, is an 
important property for $L^p$-operator algebras: 

\begin{df}(Phillips.)
    Let $p\in [1,\infty)$ and let $A$ be a Banach algebra. We say that $A$ is \emph{$p$-incompressible} if for every $L^p$-space $\mathcal{E}$, every contractive, injective homomorphism $\varphi\colon A\rightarrow\mathcal{B}(\mathcal{E})$ is isometric.
\end{df}

An important property of spatial $L^p$-AF-algebras is that they are $p$-incompressible; see \autoref{afincompressible} below. This is probably known to the experts, but does not appear explicitly in the literature even for spatial matrix 
algebras, so we include the argument.

\begin{lma}\label{afincompressible}
    Let $p\in [1,\infty)$, and let $A$ be spatial $L^p$-AF-algebra. Then $A$ is $p$-incompressible. 
\end{lma}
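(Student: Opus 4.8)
The plan is to reduce the statement about an arbitrary spatial $L^p$-AF-algebra to the case of a single finite direct sum of spatial matrix algebras, and then reduce that case to a single spatial matrix algebra via \autoref{homdirectsum}. So suppose $A=\dirlim (A_i,\varphi_{j,i})$ is a spatial $L^p$-AF-direct system, let $\mathcal{E}$ be an $L^p$-space and let $\varphi\colon A\to\mathcal{B}(\mathcal{E})$ be a contractive injective homomorphism; we must show $\|\varphi(a)\|=\|a\|$ for all $a\in A$. Since $\bigcup_i \mathrm{Im}(\varphi_{\infty,i})$ is dense in $A$ and $\varphi$ is contractive, it suffices to show that $\varphi\circ\varphi_{\infty,i}\colon A_i\to\mathcal{B}(\mathcal{E})$ is isometric for each $i$. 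The connecting maps of an AF-system need not be injective, but this is not an obstacle: the point is just that $\varphi$ restricted to the image of $A_i$ in $A$ is injective, because $\varphi$ is injective on all of $A$, and the norm on that image is still the spatial norm provided $\varphi_{\infty,i}$ is isometric — which it is, since a contractive homomorphism of a spatial matrix algebra (more generally a finite direct sum of them, handled by the direct-sum step) into an $L^p$-operator algebra that is injective is automatically isometric by Phillips' result. Thus the whole problem collapses to: a contractive, injective homomorphism from a finite direct sum $A_i=\bigoplus_{k=1}^{m}M_{n_k}^p$ into $\mathcal{B}(\mathcal{E})$ is isometric.

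For that, first handle a single spatial matrix algebra $M_n^p$. Here I would invoke Phillips' theorem \cite[Theorem 7.2]{phillips}: any nonzero contractive homomorphism $M_n^p\to\mathcal{B}(\mathcal{E})$ is isometric. An injective homomorphism is in particular nonzero (as $M_n\neq 0$), so this case is immediate; the relevant mechanism is exactly the one already used in the proof of \autoref{matrixiso}. Then for the finite direct sum $A_i=\bigoplus_{k=1}^{m}M_{n_k}^p$, let $\iota_k\colon M_{n_k}^p\to A_i$ be the inclusions and apply \autoref{homdirectsum}: I need to check that $e_k:=\varphi(\iota_k(1_{M_{n_k}}))$ is a hermitian idempotent in $\mathcal{B}(\mathcal{E})$ and that each $\varphi\circ\iota_k$ is isometric. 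The latter follows from the single-matrix case once I observe $\varphi\circ\iota_k$ is injective (as $\iota_k$ is injective and $\varphi$ is injective). For the former, $1_{M_{n_k}}$ is a sum of diagonal matrix units $e_{j,j}$, each a spatial — hence hermitian — idempotent in $M_{n_k}^p$; a contractive unital-on-its-corner homomorphism preserves hermitian elements by \autoref{preservehermitian}, and sums of hermitian idempotents here are again hermitian. Then \autoref{homdirectsum} gives that $\varphi|_{A_i}$ is an isometry, completing the reduction.

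The one subtlety I want to be careful about is the very first reduction: I should make sure the density argument is clean even when the index set $I$ is an arbitrary directed set and the connecting maps are not injective. The resolution is that $\|\varphi_{\infty,i}(x)\|_A = \lim_{j\geq i}\|\varphi_{j,i}(x)\|$, and each $\varphi_{j,i}$ is contractive, so $\|\varphi_{\infty,i}(x)\|_A\leq\|x\|$; conversely $\varphi_{\infty,i}$ factors through the quotient $A_i/\ker\varphi_{\infty,i}$, and I claim this quotient, with the induced norm, is still a direct sum of spatial matrix algebras with the spatial norm — indeed $\ker\varphi_{\infty,i}$ is an ideal of the semisimple algebra $A_i$, hence a sub-sum of the $M_{n_k}^p$, and the quotient is the complementary sub-sum; the induced norm agrees with the spatial norm because the quotient map is a contractive, injective-on-the-complement homomorphism that is isometric by Phillips' theorem applied summand-by-summand. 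So after replacing $A_i$ by this quotient we may as well assume $\varphi_{\infty,i}$ is isometric, and then $\varphi\circ\varphi_{\infty,i}$ is a contractive injective homomorphism of a finite direct sum of spatial matrix algebras, which is isometric by the previous paragraph. Letting $i$ range over $I$ and using density of $\bigcup_i\mathrm{Im}(\varphi_{\infty,i})$ in $A$ together with contractivity of $\varphi$ yields $\|\varphi(a)\|=\|a\|$ for all $a\in A$. I expect this bookkeeping around non-injective connecting maps to be the only real friction; everything else is an assembly of \autoref{homdirectsum}, \autoref{preservehermitian}, and Phillips' minimality theorem.
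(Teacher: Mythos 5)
Your finite-dimensional step follows essentially the paper's own route (Phillips' minimality theorem summand-by-summand plus \autoref{homdirectsum}), but there is a genuine gap in how you verify the hermitian hypothesis of \autoref{homdirectsum}. You need $e_k=\varphi(\iota_k(1_{M_{n_k}}))$ to be hermitian \emph{in} $\mathcal{B}(\mathcal{E})$, and you argue via \autoref{preservehermitian} applied to a ``contractive unital-on-its-corner homomorphism''. But \autoref{preservehermitian} requires a unital map, and $\varphi\circ\iota_k$ is unital only into the corner $\mathcal{B}(e_k\mathcal{E})$; hermitianness computed relative to that corner is vacuous for $e_k$ itself (it is the unit there) and does not yield hermitianness in $\mathcal{B}(\mathcal{E})$ unless one already knows the corner idempotent is hermitian, which is precisely what is to be proved --- as written the argument is circular. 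The paper avoids this by first replacing $\varphi$ with its unital compression $\crn_q\circ\varphi$, $q=\varphi(1_{A_i})$, which is norm-preserving on the image by \autoref{unitalrep}, and then deducing hermitianness of the images of the summand units from unitality together with \cite[Lemma 8.14]{phillips viola}. Relatedly, you quote Phillips' result in the strong form ``any nonzero contractive homomorphism $M_n^p\to\mathcal{B}(\mathcal{E})$ is isometric''; \cite[Theorem 7.2]{phillips} is formulated for unital representations on $L^p$-spaces of sufficiently nice measures, and the paper inserts the corresponding reductions (\autoref{unitalrep}, separability of the image, and \cite[Proposition 2.9]{phillips viola}) before citing it. These are exactly the reductions that also repair the hermitian step, so your proof is fixable, but it skips them.

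On the passage from finite direct sums to the limit algebra your route genuinely differs from the paper's: the paper replaces the given system by one with injective connecting maps (by an argument analogous to \cite[Proposition 9.11]{phillips viola}) and then invokes \cite[Lemma 4.2]{gardella thiel}, whereas you argue directly that $\ker\varphi_{\infty,i}$ is a complemented ideal of the semisimple algebra $A_i$, so $\mathrm{Im}(\varphi_{\infty,i})=\varphi_{\infty,i}(C)$ for the complementary sub-sum $C$, and then injectivity of $\varphi$, the finite-dimensional case, and density finish the proof. This is correct in substance and more self-contained, which is a genuine advantage. One repair: do not justify that the quotient $A_i/\ker\varphi_{\infty,i}$ carries the spatial norm by applying Phillips' theorem to the quotient map, since you do not yet know the quotient (with the quotient norm) is an $L^p$-operator algebra; either compute the quotient norm directly (it is the maximum over the surviving summands) or, simpler, bypass the quotient: apply the finite-dimensional case to the injective contractive map $\varphi\circ\varphi_{\infty,i}|_C$, and then for $x\in A_i$ squeeze $\|x_C\|=\|\varphi(\varphi_{\infty,i}(x_C))\|\leq\|\varphi_{\infty,i}(x)\|_A\leq\|x_C\|$, which shows $\varphi$ is isometric on $\mathrm{Im}(\varphi_{\infty,i})$, and conclude by density and contractivity.
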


\begin{proof}
We first prove the statement assuming that $A$ is finite-dimensional.
Let $n,d_1,\ldots,d_n \in \N$ and suppose that $A\cong \bigoplus_{i=1}^n M_{d_i}^p$. Let $(X,\mathcal{B},\mu)$ be a measure space and let $\varphi\colon A\rightarrow \mathcal{B}(L^p(\mu))$ be a contractive injective homomorphism. Since $A$ is unital, by \autoref{unitalrep} we may assume that $\varphi$ is unital. Note that $\varphi(A)$ is finite-dimensional since $A$ is. Thus, $\varphi(A)$ is a closed and separable subalgebra of $\mathcal{B}(L^p(\mu))$, so it is a unital separable $L^p$-operator algebra. By \cite[Proposition 2.9]{phillips viola} there exist a localizable measure space $(Y,\mathcal{C},\nu)$ and an isometric unital representation $\rho\colon \varphi(A)\rightarrow \mathcal{B}(L^p(\nu))$. Hence we may assume that $\mu$ is localizable. 

Let $\iota_i\colon M_{d_i}^p\rightarrow A$ be the canonical inclusion. By \cite[Theorem~7.2]{phillips} and since $\mu$ is 
localizable, it follows that the composition $\varphi\circ\iota_j$ is an isometry for all $j=1,\dots,n$.
As $\varphi(1)=1$ is a hermitian idempotent, by \cite[Lemma 8.14]{phillips viola} the idempotent $\varphi(\iota_i(1_{M_{d_i}^p}))$ is hermitian for each $i=1,\dots,n$. It thus follows from \autoref{homdirectsum} that $\varphi$ is an isometry.

We now turn to the general case. By an argument almost identical to \cite[Proposition 9.11]{phillips viola}, any spatial $L^p$-AF-algebra is isometrically isomorphic to the direct limit of a spatial $L^p$-AF-direct system in which all the connecting maps are injective. By the first part of this proof, any finite direct sum of spatial matrix algebras is $p$-incompressible, and hence the result follows from \cite[Lemma 4.2]{gardella thiel}.
\end{proof}

\subsection{Spatial AF-structure in acyclic graphs}
Just as in the case of graph $C^*$-algebras, it turns out that if $Q$ is an acyclic graph, then $\mathcal{O}^p(Q)$ is a spatial $L^p$-AF-algebra. We will prove this in \autoref{acyclicaf}, by combining methods from \cite{cortinas rodriguez} and \cite[Section 2]{kumjian pask raeburn}.

We begin by showing that finite, acyclic graphs give rise to finite direct sums of spatial matrix algebras.

\begin{prop}\label{finacyclicsemisimple}
Let $p\in[1,\infty)$, let $Q$ be a finite acyclic graph, let $v\in Q^0$ be a source and set $F_v=\{\alpha\in Q^*\colon \sor(\alpha)=v\}$. Then:
\be \item $I_v=\text{span}\{s_\alpha t_\beta\colon \alpha,\beta\in F_v\}$ is a two-sided ideal in $\mathcal{O}^p(Q)$.
\item For $n_v=|F_v|$, there is an isometric isomorphism $I_v\cong M_{n_v}^p$.
\item Denote by $\mathcal{S}\subseteq Q^0$ the (finite and nonempty) set of all sources of $Q$. Then
\[\mathcal{O}^p(Q)\cong \bigoplus_{v\in\mathcal{S}} I_{v}\cong \bigoplus_{v\in\mathcal{S}} M_{n_v}^p.\]
\ee
\end{prop}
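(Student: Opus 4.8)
The plan is to prove the three parts in order, using the structure theory developed above together with the Cuntz--Krieger relations, and then to reduce part~(3) to parts~(1) and~(2) via \autoref{directsum} and \autoref{homdirectsum}.

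\textbf{Part (1).} First I would observe that since $Q$ is finite and acyclic, each set $F_v$ is finite, so $I_v$ is finite-dimensional. To see that $I_v$ is a two-sided ideal, it suffices to check that $x\cdot s_\alpha t_\beta$ and $s_\alpha t_\beta\cdot x$ again lie in $I_v$ for $x$ ranging over the generators $s_a, t_a, e_w$ and $\alpha,\beta\in F_v$. This is a direct computation using the relations in \autoref{leavittpathalg} and \autoref{tsrelationcomparable}: for instance $s_a\cdot s_\alpha t_\beta = s_{a\alpha}t_\beta$ when $\sor(a)=\ran(\alpha)$ (and $0$ otherwise), with $a\alpha\in F_v$ since $\sor(a\alpha)=\sor(\alpha)=v$; similarly $t_a\cdot s_\alpha t_\beta$ is either $s_{\alpha'}t_\beta$ with $\alpha'\in F_v$, or $t_{a'}$-type terms, or $0$, and right multiplication by $t_a$ is handled symmetrically after noting $t_\beta t_a = t_{\beta a}$-type expressions. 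The vertex idempotents act by $e_w s_\alpha t_\beta = \delta_{w,\ran(\alpha)}s_\alpha t_\beta$. Since the generators span a dense subalgebra and $I_v$ is closed (being finite-dimensional), this shows $I_v$ is a two-sided ideal.

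\textbf{Part (2).} Here the key point is that $\{s_\alpha t_\beta : \alpha,\beta\in F_v\}$ is a system of matrix units, i.e., $(s_\alpha t_\beta)(s_\gamma t_\delta) = \delta_{\beta,\gamma} s_\alpha t_\delta$. This follows from \autoref{tsrelationcomparable}: $t_\beta s_\gamma$ is nonzero only if $\beta$ and $\gamma$ are comparable, and since $\beta,\gamma\in F_v$ both have source $v$ and $Q$ is acyclic, comparability forces $\beta=\gamma$ (a proper comparison $\beta = \gamma\mu$ with $\mu$ nontrivial would force $\sor(\gamma)=\ran(\mu)$ and $\sor(\mu)=v$, and one checks lengths must differ, contradicting $\sor(\beta)=\sor(\gamma)=v$ together with both being in $F_v$ — more carefully, comparable paths in $F_v$ with the same source must have the same length and hence coincide). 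Given this, I would apply \autoref{matrixiso} with the enumeration of $F_v$: the homomorphism $M_{n_v}^p \to \spn\{s_\alpha t_\beta\}$ sending the matrix unit $e_{j,k}$ to $s_{\alpha_j}t_{\alpha_k}$ is an isometry, and it is clearly surjective onto $I_v$, giving the isometric isomorphism $I_v\cong M_{n_v}^p$.

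\textbf{Part (3).} Since $Q$ is finite and acyclic, it has at least one source (following edges backwards terminates), so $\mathcal{S}$ is finite and nonempty. The identities $e_v = \sum_{a\in\ran^{-1}(v)} s_a t_a$ for regular vertices, iterated, let me write every $e_w$ as a sum of terms $s_\alpha t_\alpha$ with $\sor(\alpha)$ a source; since every vertex in a finite acyclic graph reaches a source by going backwards along edges, after finitely many applications of (CK2) one expresses each generator, and hence a dense subalgebra, inside $\sum_{v\in\mathcal{S}} I_v$, so $\mathcal{O}^p(Q) = \overline{\sum_{v\in\mathcal{S}} I_v}$. The idempotents $\varepsilon_v := \sum_{\alpha\in F_v} s_\alpha t_\alpha$ are the units of the $I_v$; I would check they are pairwise orthogonal hermitian idempotents (orthogonality because $s_\alpha t_\alpha \cdot s_\beta t_\beta = 0$ when $\sor(\alpha)\neq\sor(\beta)$, as then $\alpha,\beta$ are incomparable so $t_\alpha s_\beta = 0$; hermitian because each $s_\alpha t_\alpha$ is a hermitian idempotent by \autoref{stespatial} and finite sums of orthogonal hermitian idempotents are hermitian, using \autoref{preservehermitian} after passing to a unital representation, or \autoref{normorthspatial}/\autoref{orthogonal} as in \autoref{propertiesspatialfam}) with $\varepsilon_v \mathcal{O}^p(Q) \varepsilon_v = I_v$. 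Then \autoref{directsum} applied to the family $(\varepsilon_v)_{v\in\mathcal{S}}$ gives $\mathcal{O}^p(Q) = \overline{\sum_{v\in\mathcal{S}} I_v} \cong \bigoplus_{v\in\mathcal{S}} I_v$, and combining with part~(2) yields $\bigoplus_{v\in\mathcal{S}} I_v \cong \bigoplus_{v\in\mathcal{S}} M_{n_v}^p$.

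\textbf{Main obstacle.} I expect the main obstacle to be the bookkeeping in part~(3): carefully justifying that iterating (CK2) terminates and genuinely expresses all of $L_Q$ inside $\sum_{v\in\mathcal{S}} I_v$, and that the $\varepsilon_v$ are honest hermitian idempotents in the (nonunital) algebra $\mathcal{O}^p(Q)$ in the sense of \autoref{hermitiannonunital}. The finiteness and acyclicity of $Q$ are exactly what make the induction on path length / distance-to-a-source well-founded, and I would want to state that reduction cleanly, perhaps by induction on the (finite) maximal path length in $Q$. The matrix-unit verification in part~(2) also requires the acyclicity-plus-common-source argument to rule out proper comparability, which is the one genuinely graph-theoretic input.
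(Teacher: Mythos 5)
Your proposal is correct and follows essentially the same route as the paper's proof: the ideal property via the Cuntz--Krieger relations (using that $v$ is a source to kill the problematic comparability case), matrix units plus \autoref{matrixiso} for part~(2), and the iterated CK2 expansion together with the orthogonal hermitian idempotents $\varepsilon_v=\sum_{\alpha\in F_v}s_\alpha t_\alpha$ and \autoref{directsum} for part~(3). The only cosmetic issues are the concatenation-order slip ($t_\beta t_a=t_{a\beta}$ in the paper's convention) and that hermitian-ness of $s_\alpha t_\alpha$ for longer paths comes from \autoref{propertiesspatialfam} rather than \autoref{stespatial}, which you do also cite.
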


\begin{proof}
(1). Let $\alpha\in F_v$. As $v$ is a source, there are no paths of the form $\alpha\beta'$ with $|\beta'|>0$. Thus, recalling the relations in \autoref{tsrelationcomparable}, if $\alpha,\beta\in F_v$ and $\gamma,\delta\in Q^*$ we have
$$(s_\alpha t_\beta) (s_\gamma t_\delta)=
\begin{cases}
s_\alpha t_\delta &\text{if }\beta=\gamma,\\
        s_\alpha t_{\delta\beta'} &\text{if }\beta=\gamma\beta',\\
        0 &\text{else.}
\end{cases}$$
It follows that $(s_\alpha t_\beta) (s_\gamma t_\delta)$ belongs to $I_v$. This implies that for $a\in I_v$ and for $b\in L_Q$ we have $ab\in I_v$. Similar calculations show that $ba\in I_v$, and hence $I_v$ is a closed, two-sided ideal in $\mathcal{O}^p(Q)$.

(2). For $\alpha,\beta,\gamma,\delta\in F_v$, we have
    \begin{equation}\tag{$5.1$}\label{eqn:sourcepaths}
    (s_\alpha t_\beta)(s_\gamma t_\delta)=
            \begin{cases}
            s_\alpha t_\delta &\text{if }\beta=\gamma,\\
            0 &\text{else}.
        \end{cases}
    \end{equation}
    
Since $Q$ is finite and acyclic, the set $F_v$ is finite, and thus $\{s_\alpha t_\beta\colon \alpha,\beta\in F_v\}$ is a finite family of matrix units in $\mathcal{O}^p(Q)$. Now \autoref{matrixiso} gives the desired isometric isomorphism.

(3). Let $w\in Q^0$ and let $\alpha,\beta\in Q^*$ with $\sor(\alpha)=\sor(\beta)=w$. Suppose that $w$ is not a source. Then $s_\alpha t_\beta=\sum_{a\in \ran^{-1}(w)}s_\alpha s_a t_a t_\beta$, and for each summand either $s(a)$ is a source or we can repeat the process above. Since $Q$ is acyclic and finite, this must eventually lead to a representation of $s_\alpha t_\beta$ as a sum of terms of the form $s_{\alpha\gamma}t_{\beta\delta}$, where $\sor(\gamma)=\sor(\delta)=v$ for some $v\in \mathcal{S}$. Hence the ideals $I_{v}$, for $v\in\mathcal{S}$, span $\mathcal{O}^p(Q)$.

For $v\in \mathcal{S}$, set $\varepsilon_{v}=\sum_{\alpha\in F_{v}}s_\alpha t_\alpha$. Then $\varepsilon_{v}$ is hermitian, and it is an idempotent by (\ref{eqn:sourcepaths}). We note that if $\alpha\in F_{v}$ and $\beta\in F_{w}$ with $w\in \mathcal{S}\setminus\{v\}$, then $t_\alpha s_\beta=0$. It follows that the idempotents $\varepsilon_{v}$, for $v\in\mathcal{S}$, are pairwise orthogonal. Moreover, $\varepsilon_{v}\mathcal{O}^p(Q)\varepsilon_{v}=I_{v}$ for each $v\in\mathcal{S}$. Thus by \autoref{directsum}, we have $\mathcal{O}^p(Q)\cong\bigoplus_{v\in\mathcal{S}} I_{v}$. The last isomorphism follows from part~(2).
\end{proof}

\begin{lma}\label{finitegraphembedding}
    Let $p\in[1,\infty)$, let $Q$ be a graph and let $R$ be a finite subgraph of $Q$. Then there exists a finite graph $\overline{R}$ such that $R$ is a subgraph of $\overline{R}$ and such that there exists a Cuntz-Krieger $\overline{R}$-family in $\mathcal{O}^p(Q)$. Moreover, if $Q$ is acyclic then $\overline{R}$ can be chosen to be acyclic.
\end{lma}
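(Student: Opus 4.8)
The plan is to start from the obvious candidate family and repair the one Cuntz-Krieger relation that can fail, enlarging the graph as needed. First I would take the elements $S_a:=s_a$, $T_a:=t_a$ (for $a\in R^1$) and $E_v:=e_v$ (for $v\in R^0$) of $\mathcal{O}^p(Q)$. By \autoref{stespatial} and the relations of \autoref{leavittpathalg} these satisfy conditions (i)--(iii) of \autoref{spatialckfam} for the graph $R$, together with relation~(1) and~(CK1); and since $R$ is finite, a vertex of $R$ is regular in $R$ exactly when it is not a source of $R$, so the only relation that can fail is~(CK2). For a non-source $v$ of $R$, set $q_v:=e_v-\sum_{a\in\ran_R^{-1}(v)}s_at_a\in\mathcal{O}^p(Q)$; by parts~(1) and~(2) of \autoref{orthogonal} together with \autoref{propertiesspatialfam} this is a difference of comparable spatial idempotents, hence a hermitian idempotent, and~(CK2) holds at $v$ if and only if $q_v=0$. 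I will call $v$ a \emph{defect} vertex when $q_v\neq0$; there are only finitely many.

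Next I would resolve the defect vertices $v$ that are regular in $Q$. For these $\ran_Q^{-1}(v)$ is finite and~(CK2) holds in $\mathcal{O}^p(Q)$, so $q_v=\sum_{a\in\ran_Q^{-1}(v)\setminus\ran_R^{-1}(v)}s_at_a$ is a finite sum. I adjoin to $R$ the finitely many edges in $\ran_Q^{-1}(v)\setminus\ran_R^{-1}(v)$ together with those of their sources not already lying in $R^0$, and I extend the family by $S_a:=s_a$, $T_a:=t_a$ on the new edges and $E_w:=e_w$ on the new vertices. Since no adjoined edge points into a source of $R$, the new vertices are sources of the enlarged graph and impose no further~(CK2) constraint, whereas~(CK2) at $v$ now reads exactly the relation $e_v=\sum_{a\in\ran_Q^{-1}(v)}s_at_a$ that holds in $\mathcal{O}^p(Q)$. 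Carrying this out for all regular defect vertices at once keeps the graph finite, and if $Q$ has no infinite receivers it already produces the desired finite subgraph $\overline R\subseteq Q$, since every defect vertex is then regular in $Q$ (being a non-source of $R\subseteq Q$).

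The main obstacle is a defect vertex $v$ that is an infinite receiver of $Q$: then $\ran_Q^{-1}(v)$ is infinite, and~(CK2) at $v$ cannot hold for the canonical family in \emph{any} finite graph containing the $R$-edges into $v$, because no finite sum of the $s_at_a$ exhausts $e_v$. The remedy is to abandon $E_v=e_v$ and instead set $E_v:=e_v-q_v=\sum_{a\in\ran_R^{-1}(v)}s_at_a$, a proper hermitian subidempotent of $e_v$, so that~(CK2) at $v$ holds by construction; consistency of the other relations then forces compressions of the partial isometries incident to $v$. An incoming edge $a$ of $v$ may be kept as $s_a$: a short computation gives $q_vs_a=0=t_aq_v$, so relation~(1) at $a$ is unaffected --- provided the source idempotent of $a$ has not itself been compressed. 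An outgoing edge $c$ of $v$, however, has to be replaced by $s_c(e_v-q_v)$, which in turn shrinks the idempotent at $\ran(c)$, and so on. I expect the crux to be verifying that these compressions propagate coherently through the finite graph, so that the resulting family still satisfies (i)--(iii), relation~(1),~(CK1) and~(CK2); the needed facts --- that every idempotent occurring stays hermitian (being spatial, by \autoref{hermitianequiv}) and that every compressed map stays a spatial partial isometry with $S_aT_a$ and $T_aS_a$ hermitian idempotents --- are supplied by \autoref{orthogonal}, \autoref{normorthspatial} and \autoref{propertiesspatialfam}.

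Finally, for the acyclic refinement: when $Q$ is acyclic so is $R$, and the propagation of the previous step runs along a finite directed acyclic graph, so it terminates with no circular dependency --- one may simply define the compressions by induction along that order. Moreover, in this case every vertex and edge of $\overline R$ is a vertex or edge of $Q$ with the incidence inherited from $Q$, so $\overline R$ is itself acyclic. Assembling the two repair steps produces the finite graph $\overline R\supseteq R$ and the required Cuntz-Krieger $\overline R$-family in $\mathcal{O}^p(Q)$.
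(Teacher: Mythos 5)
Your first repair step (adjoining all of $\ran_Q^{-1}(v)$, together with the missing sources, at defect vertices that are regular in $Q$) is exactly the paper's construction of its intermediate graph $\mathfrak{R}$ and is fine. The gap is in your treatment of defect vertices $v$ that are infinite receivers of $Q$. There you replace $e_v$ by $\varepsilon_v=\sum_{a\in\ran_R^{-1}(v)}s_at_a$, compress every edge leaving $v$, and let the compression ``propagate''. For general $Q$ this is not a well-defined procedure: the lemma allows cycles, and a cycle through such a vertex makes the propagation feed back into $v$ and never terminate. Concretely, let $Q$ have a vertex $v$ with a loop $c$ (so $\sor(c)=\ran(c)=v$) and infinitely many further edges with range $v$, and take $R=(\{v\},\{c\})$. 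Your prescription gives $E_v=s_ct_c$ and forces $S_c=s_cE_v$, $T_c=E_vt_c$; but then (CK2) at $v$ demands $E_v=S_cT_c=s_cs_ct_ct_c$, which is strictly smaller than $s_ct_c$ (indeed $s_ct_c-s_cs_ct_ct_c=s_c(e_v-s_ct_c)t_c\neq 0$ since $e_v\neq s_ct_c$), so $E_v$ must be shrunk again, then $S_c$ again, and the iteration $E_v=s_c\cdots s_ct_c\cdots t_c$ never stabilizes at a nonzero idempotent. Your argument only addresses termination when $Q$ is acyclic, i.e.\ the ``moreover'' clause, not the main statement for arbitrary $Q$; and even in the acyclic case the coherence of all relations after propagation is asserted rather than verified.

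The paper avoids propagation altogether by modifying the graph instead of compressing downstream data: for each vertex $v$ that is regular in the enlarged graph but not regular in $Q$, it adjoins a \emph{new} vertex $v'$ carrying the complementary hermitian idempotent $E_{v'}=e_v-\varepsilon_v$, and it duplicates each edge $a$ with $\sor(a)=v$ into a pair $a,a'$ with $S_a=s_a\varepsilon_v$, $S_{a'}=s_a(e_v-\varepsilon_v)$, where $a'$ has source $v'$ and the same range as $a$. Since $S_aT_a+S_{a'}T_{a'}=s_at_a$, the relation (CK2) at every other vertex is completely unaffected, the new vertices $v'$ are sources of $\overline{R}$ (hence impose no (CK2) condition), and cycles cause no difficulty; moreover acyclicity is preserved because the new edges emanate from sources. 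This vertex-splitting is the idea missing from your proposal, and without it (or some substitute) your construction does not prove the lemma beyond the acyclic case.
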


\begin{proof}
We follow the constructions in \cite[Theorem 1.5.8]{lpa}. 
We write $\sor_Q$ and $\ran_Q$ for the structure maps of $Q$, and we write $\sor_R$ 
and $\ran_R$ for the maps associated to $R$.
Let $\mathfrak{R}$ be the finite graph obtained in the following way. For each vertex $v$ which is regular in $Q$ and which satisfies $\ran^{-1}_R(v)\neq\emptyset$, add the set $\ran^{-1}_Q(v)$ to $R^1$, and for all $a\in \ran^{-1}_Q(v)$ add $\sor_Q(a)$ to $R^0$. Now set 
    \[Y=\text{Reg}(\mathfrak{R})\setminus (\text{Reg}(\mathfrak{R})\cap\text{Reg}(Q))\] 
and let $Y'$ be a disjoint copy of $Y$. For $v\in Y$, let $v'$ denote the corresponding element of $Y'$, and for each $a\in \ran^{-1}_{\mathfrak{R}}(v)$ introduce a symbol $a'$. We let $\overline{R}$ be the finite graph with 
\[\overline{R}^0=\mathfrak{R}^0\sqcup Y'\text{ and }\overline{R}^1=\mathfrak{R}^1\sqcup\{a'\colon \ran^{-1}(a)\in Y\},\] 
    and we set $\sor_{\overline{R}}(a')=\sor_{\mathfrak{R}}(a)'$ and $\ran_{\overline{R}}(a')=\ran_{\mathfrak{R}}(a)$. Note that $\mathfrak{R}$ is acyclic if $Q$ is acyclic. Moreover, since $\sor_{\overline{R}}(a)$ is a source for each $a\in\overline{R}^1\setminus\mathfrak{R}^1$, also $\overline{R}$ is acyclic in this case.

    For $v\in\text{Reg}(\mathfrak{R})$, set $\varepsilon_v=\sum_{\ran^{-1}_{\mathfrak{R}}(v)}s_at_a\in L_Q$. Then each $\varepsilon_v$ is an idempotent. With the goal of defining a Cuntz-Krieger $\overline{R}$-family $(S,T,E)$ in $\mathcal{O}^p(Q)$, we define elements in $L_Q$ as follows:

    \begin{itemize}
        \item if $v\notin Y$, set $E_v=e_v$,
        \item if $v\in Y$, set $E_v=\varepsilon_v$ and $E_{v'}= e_v-\varepsilon_v$,
        \item if $\sor_Q(a)\notin Y$, set $S_a= s_a$ and $T_a= t_a$,
        \item if $\sor_Q(a)\in Y$, set $S_a= s_a \varepsilon_{\sor_Q(a)}$ and $T_a= \varepsilon_{\sor_Q(a)} t_a$, and set $S_{a'}= s_a (e_{\sor_Q(a)}-\varepsilon_{\sor_Q(a)})$ and $T_{a'}= (e_{\sor_Q(a)}-\varepsilon_{\sor_Q(a)}) t_a$.
    \end{itemize}
    Our goal is to show that $(S,T,E)$ is a Cuntz-Krieger $\overline{R}$-family in $\mathcal{O}^p(Q)$. In other words, we need to check that:
    \begin{enumerate}
        \item The $E_v$, for $v\in \overline{R}^0$, are pairwise orthogonal idempotents and $S_a$, for $a\in \overline{R}^1$, is a partial isometry with reverse $T_a$.
        \item The relations in \autoref{spatialckfam} are satisfied.
        \item $E_v$ is hermitian for all $v\in \overline{R}^0$.
        \item $S_aT_a$ is hermitian for all $a\in \overline{R}^1$.
        \item We have $\|S_a\|\leq 1$ and $\|T_a\|\leq 1$ for all $a\in \overline{R}^1$. 
    \end{enumerate}
    
We note a few properties of the elements $\varepsilon_v$. We have $\varepsilon_ve_v=e_v\varepsilon_v=\varepsilon_v$ for all $v\in \text{Reg}(\mathfrak{R})$, and if $w\in\mathfrak{R}^0$ with $w\neq v$ we have $e_v\varepsilon_w=\varepsilon_we_v=\varepsilon_v\varepsilon_w=0$. Lastly if $a\in\mathfrak{R}^1$ with $\ran_Q(a)\in\text{Reg}(\mathfrak{R})$, then since $s_at_a$ is a summand of $\varepsilon_{\ran_Q(a)}$ we have $\varepsilon_{\ran_Q(a)}s_a=s_a$ and $t_a\varepsilon_{\ran_Q(a)}=t_a$.
    
The first two properties imply that the $E_v$ are pairwise orthogonal idempotents for $v\in\overline{R}^0$. If $a\in\mathfrak{R}^1$ with $\sor_Q(a)\in Y$, then 
$$S_aT_aS_a=s_a \varepsilon_{\sor_Q(a)}\varepsilon_{\sor_Q(a)}t_a s_a \varepsilon_{\sor_Q(a)}=s_a\varepsilon_{\sor_Q(a)}e_{\sor_Q(a)}\varepsilon_{\sor_Q(a)}=s_a\varepsilon_{\sor_Q(a)}=S_a.$$ 
In a similar fashion one shows the remaining identities needed for (1).

We proceed to (2), and begin by showing that $E_{\ran_Q(a)}S_a=S_a=S_a E_{\sor_Q(a)}$ for all $a\in\overline{R}^1$. As $S_a$ is of the form $s_ax$ for some $x\in L_Q$, and as $E_{\ran_Q(a)}$ either equals $e_{\ran_Q(a)}$ or $\varepsilon_{\ran_Q(a)}$, both of which act trivially on the left on $s_a$, we see that $E_{\ran_Q(a)}S_a=S_a$ for all $a\in\overline{R}^1$. Clearly, if $\sor_Q(a)\notin Y$ then $S_aE_{\sor_Q(a)}=S_a$. If $\sor_Q(a)\in Y$, then $S_aE_{\sor_Q(a)}=s_a \varepsilon_{\sor_Q(a)}\varepsilon_{\sor_Q(a)} =S_a$, and the case of $S_{a'}E_{\sor_Q(a')}=S_{a'}$ is similar. The corresponding relations for the $T_a$ are shown in a similar way.

We must show that $T_aS_b=\delta_{a,b}E_{\sor_Q(a)}$ for all $a,b\in\overline{R}^1$. Clearly this holds if both $\sor_Q(a),\sor_Q(b)\notin Y$. If only one of $\sor_Q(a)$ and $\sor_Q(b)$ belong to $Y$, say $\sor_Q(a)$, then $t_as_b=0$ which implies that $T_aS_b=T_{a'}S_b=0$. If both $\sor_Q(a),\sor_Q(b)\in Y$ then 
$$T_aS_b=\varepsilon_{\sor_Q(a)} t_as_a \varepsilon_{\sor_Q(a)}=\varepsilon_{\sor_Q(a)}\delta_{a,b}e_{\sor_Q(a)}\varepsilon_{\sor_Q(a)}=\delta_{a,b}\varepsilon_{\sor_Q(a)}=\delta_{a,b}E_{\sor_Q(a)},$$ 
and in a similar way one shows that $T_{a'}S_{b'}=\delta_{a,b}E_{\sor_Q(a')}$. We must also show that $T_{a'}S_b=T_aS_{b'}=0$. We have 
$$T_{a'}S_b=(e_{\sor_Q(a)}-\varepsilon_{\sor_Q(a)}) t_as_b \varepsilon_{\sor_Q(b)}=\delta_{a,b}(e_{\sor_Q(a)}-\varepsilon_{\sor_Q(a)})\varepsilon_{\sor_Q(a)}=0,$$ 
and the other case is similar. 
    
The last relation in (2) is $E_v=\sum_{\ran^{-1}_{\overline{R}}(v)}S_aT_a$ for all $v\in\text{Reg}(\overline{R})$. Suppose $v\notin Y$, so that $v$ is regular in $Q$ and $\ran^{-1}_{\overline{R}}(v)=\ran^{-1}_Q(v)$. We write the right hand side as 
\begin{align*}
\sum_{\substack{\ran_Q(a)=v \\ \sor_Q(a)\notin Y}} S_aT_a &+\sum_{\substack{\ran_Q(a)=v \\ \sor_Q(a)\in Y}} S_aT_a+\sum_{\substack{\ran_Q(a)=v \\ \sor_Q(a)\in Y}}S_{a'}T_{a'} \\
&= \sum_{\substack{\ran_Q(a)=v \\ \sor_Q(a)\notin Y}} s_at_a + \sum_{\substack{\ran_Q(a)=v \\ \sor_Q(a)\in Y}}s_a \varepsilon_{\sor_Q(a)} t_a+\sum_{\substack{\ran_Q(a)=v \\ \sor_Q(a)\in Y}}s_a(e_{\sor_Q(a)}-\varepsilon_{\sor_Q(a)})t_a\\
&=\sum_{\substack{\ran_Q(a)=v \\ \sor_Q(a)\notin Y}} s_at_a+\sum_{\substack{\ran_Q(a)=v \\ \sor_Q(a)\in Y}} s_at_a =e_v=E_v.
\end{align*}
Now if $v\in Y$, the same calculation as above gives
\begin{align*}
\sum_{a\in \ran^{-1}_{\overline{R}}(v)}S_aT_a=\sum_{a\in \ran^{-1}_{\mathfrak{R}}(v)}s_at_a=\varepsilon_v=E_v.
\end{align*}
Noting that none of the $v'$ are regular in $\overline{R}$, since $\ran_Q^{-1}(v')=\emptyset$ for all $v$, we are done.

The $E_v$ and the $E_{v'}$ are hermitian as they are finite sums of hermitian elements
(see \autoref{preservehermitian}). Then they are contractive, so all $S_a,S_{a'},T_a,T_{a'}$ are also contractive. Also the $S_aT_a$ and the $S_{a'}T_{a'}$ are hermitian, which can be seen by considering the isometric representation $\varphi$ of $\mathcal{O}^p(Q)$ from \autoref{opqnormproperties}. As the set of spatial partial isometries is a semigroup, the image of these idempotents under $\varphi$ are spatial idempotents, and as $\varphi^{-1}$ preserves hermitian elements (since it is isometric and unital), these idempotents are indeed hermitian. This finishes the proof that $(S,T,E)$ is a Cuntz-Krieger $\overline{R}$-family in $\mathcal{O}^p(Q)$.
\end{proof}

\begin{thm}\label{acyclicaf}
    Let $p\in[1,\infty)\setminus\{2\}$, and let $Q$ be an acyclic graph. Then $\mathcal{O}^p(Q)$ is a spatial $L^p$-AF-algebra.
\end{thm}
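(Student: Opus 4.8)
The plan is to realise $\mathcal{O}^p(Q)$ as a direct limit, over the finite subgraphs of $Q$, of subalgebras that are isometrically isomorphic to finite direct sums of spatial matrix algebras. Fix a finite subgraph $R\subseteq Q$. By \autoref{finitegraphembedding} there is a finite acyclic graph $\overline{R}\supseteq R$ together with a Cuntz-Krieger $\overline{R}$-family $(S,T,E)$ in $\mathcal{O}^p(Q)$; let $\pi_{\overline{R}}\colon\mathcal{O}^p(\overline{R})\to\mathcal{O}^p(Q)$ be the canonical contractive homomorphism from \autoref{opquniversal} and set $B_R=\pi_{\overline{R}}(\mathcal{O}^p(\overline{R}))$. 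By \autoref{finacyclicsemisimple}, $\mathcal{O}^p(\overline{R})$ is a finite direct sum of spatial matrix algebras; since each matrix block is simple, the quotient $\mathcal{O}^p(\overline{R})/\ker\pi_{\overline{R}}$ is again a finite direct sum of spatial matrix algebras (the ideals are direct summands, and the quotient norm is the supremum norm over the surviving summands), and $\pi_{\overline{R}}$ factors through it followed by an injective contractive homomorphism into $\mathcal{O}^p(Q)$. Composing this homomorphism with an isometric representation of $\mathcal{O}^p(Q)$ on an $L^p$-space and invoking the $p$-incompressibility of finite direct sums of spatial matrix algebras (\autoref{afincompressible}), one sees that it is isometric; hence $B_R$ is isometrically a finite direct sum of spatial matrix algebras, and in particular $B_R$ is finite-dimensional and closed.

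Next I would order the finite subgraphs of $Q$ by inclusion, obtaining a directed set, and argue that $(B_R)_R$ together with the inclusions $B_R\hookrightarrow B_{R'}$ for $R\subseteq R'$ is an increasing family with $\overline{\bigcup_R B_R}=\mathcal{O}^p(Q)$. Density follows because, unwinding the definitions in the proof of \autoref{finitegraphembedding}, one has $e_v\in B_R$ for all $v\in R^0$ and $s_a,t_a\in B_R$ for all $a\in R^1$ (each such element equals one of the $S,T,E$ or a sum of two of them), so $\bigcup_R B_R$ contains every generator of $L_Q$, hence contains $L_Q$, which is dense in $\mathcal{O}^p(Q)$. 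For the monotonicity $R\subseteq R'\Rightarrow B_R\subseteq B_{R'}$, the key point is that the intermediate finite graph $\mathfrak{R}$ appearing in the proof of \autoref{finitegraphembedding} is built from $R$ by a procedure that is monotone in $R$ (a vertex with nonempty $R$-preimage also has nonempty $R'$-preimage, and the edges and vertices adjoined to form $\mathfrak{R}$ depend only on $Q$), so $R\subseteq R'$ forces $\mathfrak{R}\subseteq\mathfrak{R}'$; and, inspecting the same formulas, $B_R$ is exactly the closed subalgebra of $\mathcal{O}^p(Q)$ generated by $\{s_b,t_b:b\in\mathfrak{R}^1\}\cup\{e_w:w\in\mathfrak{R}^0\}$ (the $S,T,E$ are algebraic combinations of these elements, and conversely). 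Hence $B_R\subseteq B_{R'}$ whenever $R\subseteq R'$.

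It then remains to check that $(B_R)_R$, with these inclusions as connecting maps, is a spatial $L^p$-AF-direct system, i.e. that $B_R\hookrightarrow B_{R'}$ sends the identity of each matrix block of $B_R$ to a hermitian idempotent of $B_{R'}$. The identity $q$ of such a block has the form $\sum_{\alpha\in F}S_\alpha T_\alpha$ for a finite set $F$ of pairwise incomparable paths in $\overline{R}$; by \autoref{propertiesspatialfam} each $S_\alpha T_\alpha$ is a hermitian idempotent of $\mathcal{O}^p(Q)$ and the $S_\alpha$ ($\alpha\in F$) have pairwise orthogonal ranges, so $q$ is a hermitian idempotent of $\mathcal{O}^p(Q)$. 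Finally, working in a nondegenerate isometric spatial representation of $\mathcal{O}^p(Q)$ — in which $q$ is realised as a multiplication operator $m_{\mathbbm{1}_F}$ and the unit of $B_{R'}$, also a hermitian idempotent, as $m_{\mathbbm{1}_G}$ with $F\subseteq G$ — the element $\exp(i\lambda q)$ ($\lambda\in\mathbb{R}$), computed inside the unital algebra $B_{R'}$, is realised as $m_{\mathbbm{1}_{G\setminus F}}+e^{i\lambda}m_{\mathbbm{1}_F}$, which has norm $1$; since the representation is isometric on $\mathcal{O}^p(Q)\supseteq B_{R'}$, this shows $q$ is hermitian in $B_{R'}$. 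Therefore $\mathcal{O}^p(Q)=\varinjlim_R B_R$ is a spatial $L^p$-AF-algebra.

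The analytic ingredients above — identifying the norm on $B_R$ via $p$-incompressibility, and the hermitian-idempotent verifications — are short given the earlier results. I expect the genuine obstacle to be the combinatorial bookkeeping of the second paragraph, namely making the desingularization $R\mapsto\overline{R}$ of \autoref{finitegraphembedding} compatible with inclusions, so that the subalgebras $B_R$ genuinely form a directed system inside $\mathcal{O}^p(Q)$.
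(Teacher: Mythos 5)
Your proposal is correct and follows essentially the same route as the paper: embed each finite subgraph $R$ into a finite acyclic graph $\overline{R}$ via \autoref{finitegraphembedding}, identify $\pi_{\overline{R}}(\mathcal{O}^p(\overline{R}))$ isometrically with a finite direct sum of spatial matrix algebras using \autoref{finacyclicsemisimple}, the structure of quotients, and $p$-incompressibility (\autoref{afincompressible}), and then exhaust $\mathcal{O}^p(Q)$ by these subalgebras indexed by the finite subgraphs. The only difference is that you spell out two points the paper leaves implicit --- the monotonicity $B_R\subseteq B_{R'}$ for $R\subseteq R'$ (via $\mathfrak{R}\subseteq\mathfrak{R}'$ and the fact that the $\overline{R}$-family and the generators $s_b,t_b,e_w$ with $b\in\mathfrak{R}^1$, $w\in\mathfrak{R}^0$ generate the same closed subalgebra) and the hermitianness in $B_{R'}$ of the matrix-block identities --- and both of these verifications are correct.
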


\begin{proof}
    By \autoref{finitegraphembedding} for each finite subgraph $R$ of $Q$ there exist a finite acyclic graph $\overline{R}$ containing $R$ and a Cuntz-Krieger $\overline{R}$-family in $\mathcal{O}^p(Q)$. Let 
    \[\pi_{\overline{R}}\colon \mathcal{O}^p(\overline{R})\rightarrow\mathcal{O}^p(Q)\] 
    be the canonical contractive homomorphism of \autoref{opquniversal}. By \autoref{finacyclicsemisimple}, $\mathcal{O}^p(\overline{R})$ is a finite direct sum of spatial matrix algebras, and by \cite[Lemma 8.10]{phillips viola}, so is $\mathcal{O}^p(\overline{R})/\text{ker}(\pi_{\overline{R}})$. Hence, by \autoref{afincompressible} the quotient is $p$-incompressible. As the induced map $\widehat{\pi_{\overline{R}}}\colon \mathcal{O}^p(\overline{R})/\text{ker}(\pi_{\overline{R}})\rightarrow \pi_{\overline{R}}(\mathcal{O}^p(\overline{R}))$ is a contractive bijective homomorphism, it is an isometric isomorphism. Hence also $\pi_{\overline{R}}(\mathcal{O}^p(\overline{R}))$ is a finite direct sum of spatial matrix algebras. Let $\mathcal{I}$ be the set of finite subgraphs of $Q$, partially ordered by inclusion. If $R,S\in \mathcal{I}$ with $R\subseteq S$, let $\iota_{S,R}\colon \pi_{\overline{R}}(\mathcal{O}^p(\overline{R}))\hookrightarrow\pi_{\overline{S}}(\mathcal{O}^p(\overline{S}))$ be the inclusion. Then 
    \[\big((\pi_{\overline{R}}(\mathcal{O}^p(\overline{R}))_{R\in\mathcal{I}},(\iota_{S,R})_{R\subseteq S}\big)\] is a spatial $L^p$-AF-direct system. 

    In order to show that $\mathcal{O}^p(Q)$ is the direct limit of this system, we want to show that each finite set of elements of $\mathcal{O}^p(Q)$ can be approximated by elements lying in $\pi_{\overline{R}}(\mathcal{O}^p(\overline{R}))$ for some $R\in\mathcal{I}$. As $L_Q$ is dense in $\mathcal{O}^p(Q)$, it is enough to show that any finite set of elements on the form $s_\alpha t_\beta$ for $\alpha,\beta\in Q^*$ lies in some $\pi_{\overline{R}}(\mathcal{O}^p(\overline{R}))$. Suppose therefore we are given a finite set $F$ of pairs of paths $(\alpha,\beta)$ with $\sor(\alpha)=\sor(\beta)$. Let $R$ be the finite subgraph of $Q$ consisting of all edges and vertices occurring in the paths $\alpha,\beta$ with $(\alpha,\beta)\in F$. Then 
    \[\{s_\alpha t_\beta\colon (\alpha,\beta)\in F\}\subseteq\text{span}\{s_\alpha t_\beta\colon \alpha,\beta\in \overline{R}^*\}=\pi_{\overline{R}}(\mathcal{O}^p(\overline{R})),\]
    as desired.
\end{proof}

\section{The gauge action and its spectral subspaces}

\begin{df}
    Let $G$ be a topological group and let $A$ be a Banach algebra. An \emph{action} of $G$ on $A$ is a homomorphism $\alpha\colon G\rightarrow\text{Aut}(A)$ such that for each $a\in A$, the map $\alpha^a\colon G\rightarrow A$ given by $\alpha^a(s)=\alpha_s(a)$ for all $s\in G$ is continuous. The action $\alpha$ is called \emph{isometric} if $\alpha_s$ is isometric for each $s\in G$.
\end{df}

We denote by $\mathbb{T}$ the unit circle in $\C$, regarded as a multiplicative group.

\begin{df}
    Let $A$ be a Banach algebra equipped with an action $\alpha$ of $\mathbb{T}$. For $n\in\Z$, we define the associated \emph{spectral subspace} $A^n\subseteq A$ by
    $$A^n=\{a\in A\colon \alpha_z(a)=z^n a\text{ for all }z\in\mathbb{T}\}.$$ 
    Note that $A^n$ is a closed linear subspace of $A$ for each $n\in\Z$. We have $A^m A^n\subseteq A^{m+n}$, so in particular $A^0$ is a Banach subalgebra of $A$. We write $A^\alpha=A^0$, and call it the \emph{fixed point algebra} of $\alpha$.
\end{df}

In the next proposition, we will use some standard facts about integration 
in Banach algebras, for example as in \cite[Proposition B.34]{williams}.
The only non-trivial claim is (4), which can be interpreted as saying that the 
family $(\Phi_n)_{n\in\Z}$ is (jointly) faithful. 

\begin{prop}\label{phindef}
    Let $A$ be a Banach algebra equipped with an isometric action $\alpha$ of $\mathbb{T}$. For $n\in\Z$, define $\Phi_n\colon A\rightarrow A$ by 
    $$\Phi_n(a)=\int_\mathbb{T}z^{-n}\alpha_z(a)dz$$
    for all $a\in A$. Then:
    \begin{enumerate}
        \item $\text{Im}(\Phi_n)\subseteq A^n$ for all $n\in\Z$.
        \item $\Phi_n$ agrees with the identity on $A^n$ for all $n\in\Z$.
        \item $\Phi_n$ is linear and contractive for all $n\in\Z$.
        \item If $a\in A$ satisfies $\Phi_n(a)=0$ for all $n\in\Z$, then $a=0$.
    \end{enumerate}
\end{prop}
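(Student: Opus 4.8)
The plan is to verify each of the four statements in turn, with the first three being routine consequences of standard facts about vector-valued integration over $\mathbb{T}$ (as in \cite[Proposition B.34]{williams}), and the fourth — the faithfulness assertion — being the only one requiring a genuine argument.

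For (1), I would fix $a\in A$ and $w\in\mathbb{T}$, and use continuity and linearity of $\alpha_w$ together with the translation-invariance of Haar measure on $\mathbb{T}$ to compute
\[
\alpha_w(\Phi_n(a))=\int_{\mathbb{T}}z^{-n}\alpha_{wz}(a)\,dz
=\int_{\mathbb{T}}(w^{-1}u)^{-n}\alpha_{u}(a)\,du
=w^n\Phi_n(a),
\]
substituting $u=wz$; hence $\Phi_n(a)\in A^n$. For (2), if $a\in A^n$ then $\alpha_z(a)=z^n a$, so the integrand is the constant $z^{-n}z^n a=a$ and $\Phi_n(a)=\int_{\mathbb{T}}a\,dz=a$ since the Haar measure is normalized. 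For (3), linearity is clear from linearity of the integral, and contractivity follows from $\|\Phi_n(a)\|\le\int_{\mathbb{T}}\|z^{-n}\alpha_z(a)\|\,dz=\int_{\mathbb{T}}\|a\|\,dz=\|a\|$, using that each $\alpha_z$ is isometric and $|z^{-n}|=1$.

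The substantive part is (4). Here I would pass to an isometric representation of (the unitization of) $A$ on a suitable space, or more simply argue directly as follows. Given $a\in A$ with $\Phi_n(a)=0$ for all $n\in\Z$, I want to show $a=0$. Consider the continuous function $f\colon\mathbb{T}\to A$ given by $f(z)=\alpha_z(a)$. Its Fourier coefficients (as an $A$-valued function) are exactly $\widehat{f}(n)=\int_{\mathbb{T}}z^{-n}f(z)\,dz=\Phi_n(a)$, which vanish for all $n$. By uniqueness of Fourier coefficients for continuous (indeed $L^1$) vector-valued functions — which follows from the scalar case by composing with an arbitrary bounded linear functional $\lambda\in A^*$ and invoking the Hahn–Banach theorem, since $\lambda\circ f$ has all Fourier coefficients zero and hence is the zero function, so $\lambda(f(z))=0$ for all $z$ and all $\lambda$, forcing $f\equiv 0$ — we conclude $f\equiv 0$. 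In particular $a=f(1)=\alpha_1(a)=0$.

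The main obstacle, such as it is, is simply being careful in (4) about which regularity hypothesis on $f$ one needs and about invoking the vector-valued Fourier uniqueness theorem correctly; this is why reducing to the scalar case via functionals and Hahn–Banach is the cleanest route. Everything else is bookkeeping with the integral and the isometry hypothesis on $\alpha$.
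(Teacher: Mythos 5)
Your proof is correct. Parts (1)--(3) are handled exactly as in the paper (translation invariance of Haar measure for (1); the paper simply declares (2) and (3) immediate, and your one-line computations are what is meant). The interesting comparison is part (4), where you take a genuinely different route: you view $z\mapsto\alpha_z(a)$ as a continuous $A$-valued function whose Fourier coefficients are the $\Phi_n(a)$, and you deduce that it vanishes by reducing to the scalar uniqueness theorem via Hahn--Banach, i.e.\ by composing with arbitrary $\lambda\in A^*$ and using that $\lambda$ commutes with the vector-valued integral. The paper instead argues directly inside $A$ with a summability-kernel argument: given $\varepsilon>0$ it picks a neighborhood $U$ of $1\in\mathbb{T}$ on which $\|\alpha_z(a)-a\|<\varepsilon$, chooses a nonnegative $f\in C(\mathbb{T})$ supported in $U$ with $\int_{\mathbb{T}} f=1$, notes that the hypothesis plus density of trigonometric polynomials in $C(\mathbb{T})$ forces $\int_{\mathbb{T}} g(z)\alpha_z(a)\,dz=0$ for every $g\in C(\mathbb{T})$, and then estimates $\|a\|=\|\int_{\mathbb{T}} f(z)(a-\alpha_z(a))\,dz\|<\varepsilon$. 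The two arguments are morally close (your scalar uniqueness theorem is itself usually proved by a Fej\'er-type kernel argument), but yours outsources the analytic core to a classical theorem and is shorter, at the cost of invoking vector-valued Fourier uniqueness and the commutation of functionals with the integral, while the paper's is self-contained, needing only Stone--Weierstrass and strong continuity of the action. Both are complete proofs; just make sure, if you keep your version, to state explicitly that $\lambda\bigl(\int_{\mathbb{T}} z^{-n}\alpha_z(a)\,dz\bigr)=\int_{\mathbb{T}} z^{-n}\lambda(\alpha_z(a))\,dz$ for the (Riemann/Bochner) integral of a continuous function, which is the one standard fact your reduction rests on.
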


\begin{proof}
    (1) Let $n\in\Z$, let $w\in\mathbb{T}$ and let $a\in A$. Since $\alpha_w$ and $\alpha$ are homomorphisms, we have
    \begin{align*}
        \alpha_w(\Phi_n(a))&=\int_\mathbb{T}z^{-n}\alpha_w(\alpha_z(a))dz=w^n\int_\mathbb{T}(wz)^{-n}\alpha_{wz}(a)dz\\
        &=w^n\int_\mathbb{T}z^{-n}\alpha_z(a)dz=w^n\Phi_n(a).
    \end{align*}
    Hence $\Phi_n(a)\in A^n$. 
    
    (2) and (3) are immediate.

    (4) Suppose that $a\in A$ satisfies $\Phi_n(a)=0$ for all $n\in\Z$, and let $\varepsilon>0$ be given. We want to show that $\|a\|<\varepsilon$. Use continuity of the assignment $z\mapsto\alpha_z(a)$ to find an open neighborhood $U$ of $1\in\mathbb{T}$ such that $\|\alpha_z(a)-a\|<\varepsilon$ for all $z\in U$. Choose a non-negative function $f\in C(\mathbb{T})$ such that $f|_{U^c}=0$ and $\int_\mathbb{T} f(z)dz=1$. The assumption on $a$ implies that $\int_\mathbb{T}q(z)\alpha_z(a)dz=0$ for all polynomials $q(z)$. As the polynomials are dense in $C(\mathbb{T})$, we see that $\int_\mathbb{T}g(z)\alpha_z(a)dz=0$ for all $g\in C(\mathbb{T})$. Hence 
    \begin{align*}
        \|a\|&=\Big\|a-\int_\mathbb{T}f(z)\alpha_z(a) dz\Big\|=\Big\|\int_\mathbb{T}f(z)(a-\alpha_z(a)) dz\Big\|\\
        &\leq \int_Uf(z)\|a-\alpha_z(a)\| dz<\varepsilon,   
    \end{align*}
    as we wanted to show.
\end{proof}

\begin{prop}\label{gaugeconstruct}
    Let $p\in[1,\infty)$ and let $Q$ be a graph. Then there exists an isometric action $\gamma$ of $\mathbb{T}$ on $\mathcal{O}^p(Q)$ satisfying $\gamma_z(s_a)=zs_a,\ \gamma_z(t_a)=\overline{z}t_a$ and $\gamma_z(e_v)=e_v$ for all $z\in\mathbb{T}$, for all $a\in Q^1$ and for all $v\in Q^0$.
\end{prop}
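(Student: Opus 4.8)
The plan is to construct each automorphism $\gamma_z$ from the universal property of $\mathcal{O}^p(Q)$ established in \autoref{opquniversal}, and then to check the group law and the continuity of the orbit maps on the dense subalgebra $L_Q$.

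First I would fix $z\in\mathbb{T}$ and observe that the rescaled family $(zS,\overline{z}T,E)=\big(\{zs_a\}_{a\in Q^1},\{\overline{z}t_a\}_{a\in Q^1},\{e_v\}_{v\in Q^0}\big)$ is again a Cuntz-Krieger $Q$-family in $\mathcal{O}^p(Q)$. Conditions (i)--(iii) of \autoref{spatialckfam} are immediate: the idempotents $e_v$ are unchanged; multiplying $s_a$ and $t_a$ by a scalar of modulus one preserves their norms; the element $zs_a$ is a partial isometry with reverse $\overline{z}t_a$, since $(zs_a)(\overline{z}t_a)(zs_a)=z\,s_at_as_a=zs_a$ and symmetrically $(\overline{z}t_a)(zs_a)(\overline{z}t_a)=\overline{z}t_a$; and the relevant idempotents $(zs_a)(\overline{z}t_a)=s_at_a$ and $(\overline{z}t_a)(zs_a)=t_as_a$ are untouched by the rescaling, hence still hermitian. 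The relations in \autoref{spatialckfam} are verified the same way, the scalars $z$ and $\overline{z}=z^{-1}$ cancelling in each monomial: for instance CK1 reads $(\overline{z}t_a)(zs_b)=t_as_b=\delta_{a,b}E_{\sor(b)}$, and CK2 reads $\sum_{a\in\ran^{-1}(v)}(zs_a)(\overline{z}t_a)=\sum_{a\in\ran^{-1}(v)}s_at_a=e_v$. By \autoref{opquniversal} there is then a contractive homomorphism $\gamma_z\colon\mathcal{O}^p(Q)\to\mathcal{O}^p(Q)$ with $\gamma_z(s_a)=zs_a$, $\gamma_z(t_a)=\overline{z}t_a$ and $\gamma_z(e_v)=e_v$.

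Next I would verify that $z\mapsto\gamma_z$ takes values in $\Aut(\mathcal{O}^p(Q))$ and is a group homomorphism. For $z,w\in\mathbb{T}$, the contractive homomorphisms $\gamma_z\circ\gamma_w$ and $\gamma_{zw}$ agree on the generators $s_a,t_a,e_v$, hence on $L_Q$, hence on all of $\mathcal{O}^p(Q)$ by density and continuity; so $\gamma_z\gamma_w=\gamma_{zw}$. Since $\gamma_1=\id$, each $\gamma_z$ is a bijection with inverse $\gamma_{\overline{z}}$; being a contractive homomorphism with contractive inverse, $\gamma_z$ is an isometric automorphism. For the remaining continuity requirement, note that on $L_Q$ the orbit map is visibly continuous: a general element of $L_Q$ is a finite linear combination of monomials $s_\alpha t_\beta$ with $\alpha,\beta\in Q^*$, and $\gamma_z(s_\alpha t_\beta)=z^{|\alpha|-|\beta|}s_\alpha t_\beta$, which is polynomial in $z$. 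For arbitrary $a\in\mathcal{O}^p(Q)$ one passes to the general case by an $\varepsilon/3$-argument, using that every $\gamma_z$ is isometric to control $\|\gamma_z(a-b)\|$ and $\|\gamma_w(a-b)\|$ uniformly when $b\in L_Q$ approximates $a$. This shows that $\gamma$ is an isometric action of $\mathbb{T}$ on $\mathcal{O}^p(Q)$ with the required values on generators.

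I do not expect a genuine obstacle: the whole argument rests on the universal property together with routine density/continuity bookkeeping. The only point that deserves a little care is confirming that the rescaled family still satisfies the hermitian-idempotent conditions of \autoref{spatialckfam}, but this is automatic precisely because the idempotents $e_v$, $s_at_a$ and $t_as_a$ are not affected by multiplying $s_a,t_a$ by unimodular scalars.
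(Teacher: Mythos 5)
Your proposal is correct and follows exactly the paper's route: the paper also constructs $\gamma_z$ by applying \autoref{opquniversal} to the rescaled Cuntz-Krieger family $(zs,\overline{z}t,e)$, leaving the group law, isometry, and continuity checks implicit. Your write-up simply makes those routine verifications explicit, and they are all sound.
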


\begin{proof}
    This follows immediately from \autoref{opquniversal}, since for $z\in\mathbb{T}$, the family $(zS,\overline{z}T,E)$ is a Cuntz-Krieger $Q$-family whenever $(S,T,E)$ is.
\end{proof}

\begin{df}
    The action $\gamma\colon\mathbb{T}\rightarrow\text{Aut}(\mathcal{O}^p(Q))$ constructed in \autoref{gaugeconstruct} is called the \emph{gauge} action.
\end{df}

In the following, the only action of $\mathbb{T}$ on $\mathcal{O}^p(Q)$ we will consider is the gauge action $\gamma$. For $n\in\Z$, we let $\mathcal{O}^p(Q)^n$ and $\Phi_n$ denote the $n$-th spectral subspace and the $n$-th projection map, respectively, associated to $\gamma$.

\begin{lma}\label{corephi}
    Let $p\in[1,\infty)$ and let $Q$ be a graph. Let $\gamma\colon\mathbb{T}\rightarrow\text{Aut}(\mathcal{O}^p(Q))$ be the gauge action. For $n\in\Z$, we have 
    $$\mathcal{O}^p(Q)^n=\text{Im}(\Phi_n)=\overline{\text{span}}\{s_\alpha t_\beta\colon |\alpha|-|\beta|=n\}.$$
\end{lma}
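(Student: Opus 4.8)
The plan is to prove the two equalities separately. The identity $\mathcal{O}^p(Q)^n=\text{Im}(\Phi_n)$ is immediate from \autoref{phindef}: part~(1) gives $\text{Im}(\Phi_n)\subseteq\mathcal{O}^p(Q)^n$, and part~(2) says $\Phi_n$ restricts to the identity on $\mathcal{O}^p(Q)^n$, so every element of $\mathcal{O}^p(Q)^n$ equals $\Phi_n$ of itself and hence lies in $\text{Im}(\Phi_n)$. It remains to identify $\text{Im}(\Phi_n)$ with $\overline{\text{span}}\{s_\alpha t_\beta\colon|\alpha|-|\beta|=n\}$.

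The computational heart is the formula for $\Phi_n$ on the spanning monomials of $L_Q$. By \autoref{gaugeconstruct} we have $\gamma_z(s_\alpha t_\beta)=z^{|\alpha|}\overline{z}^{|\beta|}s_\alpha t_\beta=z^{|\alpha|-|\beta|}s_\alpha t_\beta$ for all $\alpha,\beta\in Q^*$, so pulling the resulting scalar out of the vector-valued integral defining $\Phi_n$ (using standard properties of such integrals, as in \cite[Proposition B.34]{williams}) and invoking orthogonality of the characters of $\mathbb{T}$ gives
$$\Phi_n(s_\alpha t_\beta)=\Big(\int_\mathbb{T}z^{|\alpha|-|\beta|-n}\,dz\Big)s_\alpha t_\beta=\begin{cases}s_\alpha t_\beta&\text{if }|\alpha|-|\beta|=n,\\0&\text{otherwise.}\end{cases}$$
In particular, each $s_\alpha t_\beta$ with $|\alpha|-|\beta|=n$ is fixed by $\Phi_n$, so it lies in $\text{Im}(\Phi_n)$; since $\text{Im}(\Phi_n)=\mathcal{O}^p(Q)^n$ is a closed subspace, it contains $\overline{\text{span}}\{s_\alpha t_\beta\colon|\alpha|-|\beta|=n\}$, which is the inclusion ``$\supseteq$''.

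For the reverse inclusion, I would use that $L_Q$ is dense in $\mathcal{O}^p(Q)$ and that, by the relations in \autoref{leavittpathalg} (via the reductions recorded in \autoref{tsrelationcomparable}), every element of $L_Q$ is a finite linear combination of monomials $s_\alpha t_\beta$ with $\sor(\alpha)=\sor(\beta)$. Since $\Phi_n$ is linear, the displayed formula shows $\Phi_n(L_Q)\subseteq\text{span}\{s_\alpha t_\beta\colon|\alpha|-|\beta|=n\}$; and since $\Phi_n$ is contractive, hence continuous, and $L_Q$ is dense, we get $\text{Im}(\Phi_n)=\overline{\Phi_n(L_Q)}$, which is therefore contained in the closed span. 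Combining the two inclusions finishes the proof. No step here is a genuine obstacle; the only mild care needed is the justification that scalars factor out of the Banach-space-valued integral and the recollection of the normal-form description $L_Q=\text{span}\{s_\alpha t_\beta\}$.
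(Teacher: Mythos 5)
Your proposal is correct and follows essentially the same route as the paper: the first equality from parts (1) and (2) of the proposition on the maps $\Phi_n$, the computation $\Phi_n(s_\alpha t_\beta)=\delta_{n,|\alpha|-|\beta|}\,s_\alpha t_\beta$ via character orthogonality, closedness of the spectral subspace for one inclusion, and density of $L_Q$ together with linearity and continuity of $\Phi_n$ for the other.
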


\begin{proof}
    From (1) and (2) of \autoref{phindef} we have $\text{Im}(\Phi_n)\subseteq\mathcal{O}^p(Q)^n$ and $\Phi_n=\text{id}$ on $\mathcal{O}^p(Q)^n$, so that $\mathcal{O}^p(Q)^n\subseteq\text{Im}(\Phi_n)$. This proves the first equality. If $\alpha,\beta$ are paths with $|\alpha|-|\beta|=n$, for $z\in\mathbb{T}$ we have $\gamma_z(s_\alpha t_\beta)=z^ns_\alpha t_\beta$. As $\mathcal{O}^p(Q)^n$ is a closed subspace, we get $\overline{\text{span}}\{s_\alpha t_\beta\colon |\alpha|-|\beta|=n\}\subseteq\mathcal{O}^p(Q)^n$. Lastly, for arbitrary paths $\alpha$ and $\beta$ we have 
    $$\Phi_n(s_\alpha t_\beta)=\int_\mathbb{T}z^{-n}z^{|\alpha|-|\beta|}s_\alpha t_\beta dz=\delta_{n,|\alpha|-|\beta|}s_\alpha t_\beta.$$ 
    As $\Phi_n$ is linear and continuous, this proves the inclusion $\text{Im}(\Phi_n)\subseteq\overline{\text{span}}\{s_\alpha t_\beta\colon |\alpha|-|\beta|=n\}$. 
\end{proof}

Next, we introduce some notation that will be convenient when describing the 
spectral subspaces of the gauge action.

\begin{nota}
For a graph $Q$ and $k\in\Z$, we define
\begin{align*}
        \mathcal{F}_k&=\overline{\text{span}}\{s_\alpha t_\beta\colon \alpha,\beta\in Q^k\},\\
        \mathcal{F}_k(v)&=\overline{\text{span}}\{s_\alpha t_\beta\colon \alpha,\beta\in Q^k\cap \sor^{-1}(v)\}\text{ for }v\in Q^0.
\end{align*}
Note that $\mathcal{F}_k$ and $\mathcal{F}_k(v)$ are (closed) subalgebras of $\mathcal{O}^p(Q)$.
\end{nota}

From now on, we write $\N=\{0,1,2,\ldots\}$.

\begin{lma}\label{fkdirectsum}
    Let $p\in[1,\infty)$, let $Q$ be a graph with no infinite emitters and let $k\in\N$. Then $\mathcal{F}_k$ is isometrically isomorphic to $\bigoplus_{v\in Q^0}\mathcal{F}_k(v)$.
\end{lma}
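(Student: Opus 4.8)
The plan is to apply \autoref{directsum}, so the main task is to exhibit $\mathcal{F}_k$ as the closed sum of corner algebras $e_v \mathcal{F}_k e_v$ over a family of pairwise orthogonal hermitian idempotents indexed by $Q^0$, and to identify these corners with $\mathcal{F}_k(v)$. First I would take the family $\{e_v\}_{v\in Q^0}$ of hermitian idempotents (these are hermitian by \autoref{stespatial}), which is pairwise orthogonal by relation~(1) of \autoref{leavittpathalg}. Since $\mathcal{O}^p(Q)$ is unitizable (\autoref{stespatial}), we may view $\mathcal{F}_k$ as sitting inside the unitizable algebra $\mathcal{O}^p(Q)$; but to invoke \autoref{directsum} with the $e_v$ I instead work with the corners inside $\mathcal{F}_k$ itself, noting that each $e_v$ lies in $\mathcal{F}_k$ for $k=0$ and that for general $k$ one has $e_v = \sum_{\alpha\in Q^k\cap\sor^{-1}(v),\, \text{suitable}} \cdots$ — here is where the hypothesis of no infinite emitters enters: I need that multiplication by $e_v$ makes sense as a multiplier on $\mathcal{F}_k$ and that the $e_v$ act as a (central, among themselves) family whose corners exhaust $\mathcal{F}_k$.

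The key computation is that for paths $\alpha,\beta\in Q^k$ we have $s_\alpha t_\beta = e_{\ran(\alpha)} s_\alpha t_\beta e_{\ran(\beta)}$, and more usefully that $s_\alpha t_\beta \neq 0$ forces $\sor(\alpha)=\sor(\beta)$ (by \autoref{tsrelationcomparable}, since two paths of the same length are comparable only if equal), so writing $v=\sor(\alpha)=\sor(\beta)$ we get $s_\alpha t_\beta = s_\alpha e_v t_\beta$. Using relation~(2), $s_\alpha e_v = s_\alpha e_{\sor(\alpha)} = s_\alpha$ and similarly $e_v t_\beta = t_\beta$; but the point I actually want is that $s_\alpha t_\beta = (s_\alpha t_\beta) \cdot e_v$ only after introducing the right idempotent. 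Let me instead use $q_v := \sum_{\alpha\in Q^k\cap\sor^{-1}(v)} s_\alpha s_\alpha^{\mathrm{rev}}$... no — cleaner: the natural orthogonal idempotents here are $e_v$ acting \emph{on the source side}, i.e. I should observe $s_\alpha t_\beta = s_\alpha e_{\sor(\beta)} t_\beta$ and that the map $x \mapsto \sum$-compression is governed by the source vertices. Concretely, set for $v\in Q^0$ the element $p_v$ to be the (hermitian, by \autoref{preservehermitian} via the isometric representation of \autoref{opqnormproperties}) idempotent which acts as the identity on $\mathcal{F}_k(v)$; for $k=0$ this is $e_v$, and in general $\mathcal{F}_k(v) = p_v \mathcal{F}_k p_v$ with the $p_v$ pairwise orthogonal because $t_\beta s_{\alpha'} = 0$ whenever $\sor(\beta)\neq\sor(\alpha')$ (again \autoref{tsrelationcomparable}). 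Then \autoref{orthogonal}(1),(3) and \autoref{normorthspatial} guarantee the $p_v$ are hermitian idempotents with orthogonal ranges.

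With this in hand, \autoref{directsum} applied to the family $\{p_v\}_{v\in Q^0}$ of pairwise orthogonal hermitian idempotents inside the unitizable algebra $\mathcal{O}^p(Q)$ gives
\[
\overline{\sum_{v\in Q^0} p_v \,\mathcal{O}^p(Q)\, p_v} \;\cong\; \bigoplus_{v\in Q^0} p_v\, \mathcal{O}^p(Q)\, p_v
\]
isometrically, and it remains to check that $p_v \mathcal{O}^p(Q) p_v \cap \mathcal{F}_k = p_v \mathcal{F}_k p_v = \mathcal{F}_k(v)$ and that $\overline{\sum_v \mathcal{F}_k(v)} = \mathcal{F}_k$. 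The latter is immediate from the spanning description of $\mathcal{F}_k$ together with the observation above that every nonzero $s_\alpha t_\beta$ with $\alpha,\beta\in Q^k$ has a well-defined common source $v$ and hence lies in $\mathcal{F}_k(v)$. The main obstacle I anticipate is purely bookkeeping: making precise that $p_v$ is the correct hermitian idempotent representing the identity of $\mathcal{F}_k(v)$ inside $\mathcal{O}^p(Q)$ (for $k>0$ this is not literally $e_v$), and verifying that the hypothesis ``no infinite emitters'' is exactly what is needed for $p_v$ to be a bona fide element of $\mathcal{O}^p(Q)$ rather than merely a multiplier — this is where one invokes relation (CK2) and the finiteness of $\ran^{-1}(w)$ along the paths involved, so that the relevant sums are finite. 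Once that is nailed down, the isometric identification $\mathcal{F}_k \cong \bigoplus_{v\in Q^0}\mathcal{F}_k(v)$ follows directly from \autoref{directsum}.
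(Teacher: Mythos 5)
Your overall strategy is the paper's: decompose $\mathcal{F}_k$ by a family of source-indexed hermitian idempotents and invoke \autoref{directsum}. However, there is a genuine gap at the one point where the hypothesis on $Q$ enters. You never actually define $p_v$; the correct element is precisely the one you wrote down and then abandoned, namely
\[
\varepsilon_v \;=\; \sum_{\alpha\in Q^k\cap\,\sor^{-1}(v)} s_\alpha t_\alpha ,
\]
and the whole point of the hypothesis ``no infinite emitters'' is that this sum is \emph{finite}: every vertex $w$ has $\sor^{-1}(w)$ finite, and a path $\alpha=\alpha_1\cdots\alpha_k$ with $\sor(\alpha)=v$ is built by choosing $\alpha_k\in\sor^{-1}(v)$, then $\alpha_{k-1}\in\sor^{-1}(\ran(\alpha_k))$, and so on, each time from a finite set; hence $Q^k\cap\sor^{-1}(v)$ is finite. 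Then $\varepsilon_v$ is an honest element of $\mathcal{F}_k$, it is hermitian as a finite sum of the hermitian idempotents $s_\alpha t_\alpha$ (\autoref{propertiesspatialfam} and \autoref{preservehermitian}), and the relation $t_\gamma s_\alpha=\delta_{\gamma,\alpha}e_{\sor(\alpha)}$ for $\gamma,\alpha\in Q^k$ (\autoref{tsrelationcomparable}) gives pairwise orthogonality of the $\varepsilon_v$ and $\varepsilon_v\mathcal{F}_k\varepsilon_v=\mathcal{F}_k(v)$, after which \autoref{directsum} finishes the argument exactly as you indicate (applying it in $\mathcal{O}^p(Q)$ and restricting, as you propose, works just as well as applying it to $\mathcal{F}_k$ directly, since the isomorphism of \autoref{directsum} sends finitely supported tuples to their sums).

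Your stated reason for finiteness --- ``relation (CK2) and the finiteness of $\ran^{-1}(w)$ along the paths involved'' --- is wrong on both counts, and as written that step fails. CK2 plays no role in this lemma (it is only needed later, e.g.\ in \autoref{isocore}, to get the inclusions $\mathcal{F}_k\subseteq\mathcal{F}_{k+1}$), and the hypothesis gives no control on $\ran^{-1}(w)$ at all: infinite receivers are perfectly allowed, so finiteness of $\ran^{-1}$ is simply not available. The finiteness you need is of $\sor^{-1}$, i.e.\ of the sets of edges \emph{emitted} by the vertices along the paths, which is exactly what ``no infinite emitters'' provides. Once $p_v$ is replaced by the explicit $\varepsilon_v$ above and the finiteness argument is corrected, your proof coincides with the paper's.
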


\begin{proof}
    For $v\in Q^0$, define $\varepsilon_v=\sum_{\alpha\in Q^k\cap \sor^{-1}(v)}s_\alpha t_\alpha$. The sum is finite because $Q$ has no infinite emitters, and thus $\varepsilon_v$ is hermitian by \autoref{preservehermitian}. Moreover, we have $\varepsilon_v\mathcal{F}_k\varepsilon_v=\mathcal{F}_k(v)$. One can easily check that the $\varepsilon_v$, for $v\in Q^0$, are pairwise orthogonal idempotents. We have $\mathcal{F}_k=\overline{\sum_{v\in Q^0}\mathcal{F}_k(v)}$ by definition. Thus, the result follows by \autoref{directsum}. 
\end{proof}

We obtain the following useful result.

\begin{cor}\label{isocore}
Let $p\in[1,\infty)$ and let $Q$ be a regular graph with no 
infinite emitters and no sinks. Then $\mathcal{O}^p(Q)^\gamma$ is a spatial $L^p$-AF-algebra.

Moreover, if $(S,T,E)$ is a Cuntz-Krieger $Q$-family in a unitizable 
$L^p$-operator algebra $A$ with $E_v\neq 0$ for all $v\in Q^0$, then the canonical homomorphism $\pi\colon \mathcal{O}^p(Q)\rightarrow A$ is isometric on $\mathcal{O}^p(Q)^\gamma$.
\end{cor}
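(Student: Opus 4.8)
The plan is to treat the two assertions of \autoref{isocore} separately, building both on the AF-structure coming from the spectral subspaces.

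\medskip

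\textbf{First assertion.} I would first show that, for a regular graph $Q$ with no infinite emitters and no sinks, the fixed point algebra $\mathcal{O}^p(Q)^\gamma$ is the closed union of the increasing chain of subalgebras $\mathcal{F}_k$, $k\in\N$. By \autoref{corephi} we have $\mathcal{O}^p(Q)^\gamma = \overline{\text{span}}\{s_\alpha t_\beta\colon |\alpha|=|\beta|\}$, so it suffices to see that each $s_\alpha t_\beta$ with $|\alpha|=|\beta|=m$ lies in $\mathcal{F}_k$ for all $k\geq m$; this follows by repeatedly applying the (CK2) relation at the source vertex, which is legitimate since every vertex is regular, producing $s_\alpha t_\beta = \sum_{a} s_{\alpha a}t_{\beta a}$ over $a\in \sor^{-1}(\sor(\alpha))$, a finite nonempty sum because there are no infinite emitters and no sinks. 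Next, I would identify each $\mathcal{F}_k(v)$ as a spatial matrix algebra: since $Q^k\cap\sor^{-1}(v)$ is a finite set of incomparable paths (all of length $k$ with the same source), $\{s_\alpha t_\beta\}_{\alpha,\beta\in Q^k\cap\sor^{-1}(v)}$ is a finite system of matrix units by \autoref{tsrelationcomparable} and part~(3) of \autoref{orthogonal}, so \autoref{matrixiso} gives $\mathcal{F}_k(v)\cong M_{n}^p$ isometrically with $n=|Q^k\cap\sor^{-1}(v)|$. Combining this with \autoref{fkdirectsum} shows each $\mathcal{F}_k$ is isometrically a (possibly infinite, but that is allowed in our generalized definition) direct sum of spatial matrix algebras. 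Finally the inclusions $\mathcal{F}_k\hookrightarrow\mathcal{F}_{k+1}$ are contractive and send each matrix-block identity $\varepsilon_v^{(k)}=\sum_{\alpha\in Q^k\cap\sor^{-1}(v)}s_\alpha t_\alpha$ to a finite sum of the hermitian idempotents $s_{\alpha a}t_{\alpha a}$, hence to a hermitian idempotent (by \autoref{preservehermitian}). Thus $((\mathcal{F}_k)_{k\in\N},(\iota_{k+1,k}))$ is a spatial $L^p$-AF-direct system with direct limit $\mathcal{O}^p(Q)^\gamma$, which is therefore a spatial $L^p$-AF-algebra.

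\medskip

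\textbf{Second assertion.} Let $(S,T,E)$ be a Cuntz-Krieger $Q$-family with all $E_v\neq 0$ and let $\pi\colon\mathcal{O}^p(Q)\to A$ be the canonical contractive homomorphism. Since $\mathcal{O}^p(Q)^\gamma$ is a spatial $L^p$-AF-algebra, by \autoref{afincompressible} it is $p$-incompressible, and by an argument as in \autoref{acyclicaf} (passing to the isometric representation of $A^+$ on an $L^p$-space provided by \autoref{unitalrep} and noting $p$-incompressibility is inherited along such a representation) it therefore suffices to show that $\pi$ is \emph{injective} on $\mathcal{O}^p(Q)^\gamma$. For this I would show $\pi$ is injective on each finite-dimensional block: on $\mathcal{F}_k(v)\cong M_n^p$, the restriction of $\pi$ is a homomorphism sending matrix units $s_\alpha t_\beta$ to $S_\alpha T_\beta$, and it is nonzero because $\pi(\varepsilon_v^{(k)})$ dominates $S_\alpha T_\alpha$ whose range idempotent $T_\alpha S_\alpha = E_{\sor(\alpha)}\neq 0$ (using relation (CK1) and $E_v\neq 0$), so $S_\alpha T_\alpha\neq 0$; any nonzero homomorphism out of the simple algebra $M_n^p$ is injective and in fact isometric by \autoref{matrixiso}-type reasoning (or \cite[Theorem 7.2]{phillips}). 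Since $\pi$ is isometric on each $\mathcal{F}_k(v)$, it is isometric on $\mathcal{F}_k$ (the blocks have orthogonal hermitian idempotent units, so $\pi|_{\mathcal{F}_k}$ is a diagonal operator by \autoref{homdirectsum}), and hence isometric on the dense subalgebra $\bigcup_k \mathcal{F}_k$ of $\mathcal{O}^p(Q)^\gamma$, therefore isometric on all of $\mathcal{O}^p(Q)^\gamma$.

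\medskip

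\textbf{Main obstacle.} The routine parts are the matrix-unit bookkeeping and the direct-sum identifications. The step requiring the most care is verifying that the nonvanishing hypothesis $E_v\neq 0$ genuinely propagates to nonvanishing of $S_\alpha T_\alpha$ for \emph{every} path $\alpha$ — i.e. that no vertex idempotent gets ``absorbed'' when passing to longer paths — and hence that $\pi$ is nonzero, and therefore injective and isometric, on each simple block $\mathcal{F}_k(v)$; once that is in place, the $p$-incompressibility of the AF-limit does the rest. One should also be slightly careful that the generalized (infinite direct sum) notion of spatial $L^p$-AF-algebra used here is the one for which \autoref{afincompressible} was proved, which it is.
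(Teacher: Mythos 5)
Your proposal is correct and follows essentially the paper's own proof: the first assertion is argued exactly as in the paper (write $\mathcal{O}^p(Q)^\gamma$ as the closure of the increasing union of the $\mathcal{F}_k$, identify each block $\mathcal{F}_k(v)$ as a spatial matrix algebra via \autoref{matrixiso} and assemble with \autoref{fkdirectsum}), and your second assertion just spells out, via Phillips' minimality theorem and \autoref{homdirectsum}, the blockwise isometry that the paper compresses into ``the restriction of $\pi$ is injective, so apply \autoref{afincompressible}'' --- a useful expansion, since injectivity on the closure is not obvious without it. The only slips are notational: with the paper's right-to-left path convention the CK2 expansion reads $s_\alpha t_\beta=\sum_{a\in\ran^{-1}(\sor(\alpha))}s_{\alpha a}t_{\beta a}$, a finite nonempty sum because $Q$ is \emph{regular} (no sources, no infinite receivers), while ``no infinite emitters'' is what makes each $Q^k\cap\sor^{-1}(v)$ finite; and since the definition of a spatial $L^p$-AF-direct system requires \emph{finite} direct sums of matrix algebras at each stage, you should either note (as the paper does) that each $\mathcal{F}_k$ is itself a spatial $L^p$-AF-algebra, or refine the system over pairs $(k,S)$ with $S\subseteq Q^0$ finite, rather than assert that infinite direct sums are allowed as building blocks.
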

\begin{proof}
By \autoref{corephi} we have $\mathcal{O}^p(Q)^\gamma=\overline{\bigcup_{k\in\N}\mathcal{F}_k}$, and since $Q$ is regular, for $s_\alpha t_\beta\in\mathcal{F}_k(v)$ we have $s_\alpha t_\beta=\sum_{a\in \ran^{-1}(v)} s_{\alpha a} t_{\beta a}\in\mathcal{F}_{k+1}$, so that there is an isometric inclusion $\mathcal{F}_k\subseteq\mathcal{F}_{k+1}$. Moreover, for $k\in\N$, each $\mathcal{F}_k$ is a 
(possibly infinite) direct sum of spatial matrix algebras by \autoref{fkdirectsum} and \autoref{afincompressible},
so $\mathcal{F}_k$ is itself a spatial $L^p$-AF-algebra.

We turn to the second statement. The assumption that $E_v\neq 0$ for all $v\in Q^0$ implies 
that the restriction of $\pi$ to $\mathcal{O}^p(Q)^\gamma$ is injective, and hence the 
claim follows from \autoref{afincompressible}.
\end{proof}
 
We will need a stronger version of the result above for graphs without sinks, 
asserting that the canonical 
homomorphism $\pi$ from \autoref{isocore} is isometric not only on the fixed point
subalgebra but also on all spectral subspaces. Incompressibility of spatial 
$L^p$-AF-algebras is not useful in this setting, since the spectral subspaces are 
not subalgebras.

\begin{prop}\label{isometryspectral}
Let $p\in[1,\infty)$ and let $Q$ be a regular graph with no 
infinite emitters. Let $(S,T,E)$ be a Cuntz-Krieger $Q$-family in a unitizable $L^p$-operator algebra $A$ with $E_v\neq 0$ for all $v\in Q^0$, and let $n\in\Z$. 
Then the canonical homomorphism $\pi\colon \mathcal{O}^p(Q)\rightarrow A$ is isometric on $\mathcal{O}^p(Q)^n$.\end{prop}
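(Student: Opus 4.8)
The plan is to reduce the statement to a norm computation in finite-dimensional $L^p$-operator algebras, relying on the fact that $\pi$ is already isometric on $\mathcal{O}^p(Q)^\gamma=\mathcal{O}^p(Q)^0$ by \autoref{isocore}. Since the case $p=2$ is classical and the case $n=0$ is \autoref{isocore}, we may assume $p\in[1,\infty)\setminus\{2\}$ and $n\geq1$, the case $n\leq-1$ being entirely analogous with the roles of the two path-lengths interchanged. Using \autoref{propertiesspatialfam} and \autoref{spatialsemigroup}, fix unital isometric representations of $\mathcal{O}^p(Q)^+$ and of $A^+$ on localizable $L^p$-spaces $L^p(\mu)$ and $L^p(\nu)$, with respect to which all the elements $s_\alpha t_\beta$ and $S_\alpha T_\beta$ ($\alpha,\beta\in Q^*$) are spatial partial isometries. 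By \autoref{corephi}, $\mathcal{O}^p(Q)^n=\overline{\text{span}}\{s_\alpha t_\beta\colon|\alpha|-|\beta|=n\}$; applying (CK2) at $\sor(\alpha)=\sor(\beta)$, which is legitimate as $Q$ is regular, gives $s_\alpha t_\beta=\sum_{a\in\ran^{-1}(\sor(\alpha))}s_{\alpha a}t_{\beta a}$, a finite sum since $Q$ has no infinite receivers. Hence the subspaces $\mathcal{G}_k^n:=\overline{\text{span}}\{s_\alpha t_\beta\colon|\alpha|=k+n,\ |\beta|=k\}$ satisfy $\mathcal{G}_k^n\subseteq\mathcal{G}_{k+1}^n$ and $\mathcal{O}^p(Q)^n=\overline{\bigcup_{k\geq0}\mathcal{G}_k^n}$, so it suffices to prove that $\pi$ is isometric on each $\mathcal{G}_k^n$.

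Fix $k$, and for $v\in Q^0$ let $A_v$ (resp.\ $B_v$) be the set of paths of length $k+n$ (resp.\ $k$) with source $v$; both are finite because $Q$ has no infinite emitters. In the chosen representation of $\mathcal{O}^p(Q)$, for $\alpha\in A_v$ and $\beta\in B_v$ the operator $s_\alpha t_\beta$ vanishes off the characteristic set $X_\beta$ and maps $L^p(\mu_{X_\beta})$ isometrically onto $L^p(\mu_{X_\alpha})$, factoring through $L^p(\mu_{X_v})$ where $e_v=m_{\mathbbm{1}_{X_v}}$, by \autoref{spatialreverse} and \autoref{spatialsemigroup} together with the identity $t_\gamma s_\gamma=e_{\sor(\gamma)}$ from \autoref{tsrelationcomparable}. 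Moreover the sets $X_\beta$, $\beta\in B_v$, are pairwise disjoint, as are the sets $X_\alpha$, $\alpha\in A_v$, and these remain disjoint across distinct vertices, since distinct paths of equal length are incomparable (\autoref{orthogonal}). Conjugating by the isometric isomorphisms $L^p(\mu_{X_\beta})\to L^p(\mu_{X_v})$ and $L^p(\mu_{X_\alpha})\to L^p(\mu_{X_v})$ arising from the relevant spatial systems reduces the computation of the norm on $\text{span}\{s_\alpha t_\beta\colon\alpha\in A_v,\ \beta\in B_v\}$ to the elementary fact that a complex $M\times m$ matrix $C$, acting coordinatewise on the $m$-fold $\ell^p$-direct sum of any $L^p$-space $Y$, has the same norm as $C\colon\ell^p_m\to\ell^p_M$. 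This yields an isometric isomorphism $\text{span}\{s_\alpha t_\beta\colon\alpha\in A_v,\ \beta\in B_v\}\cong\mathcal{B}(\ell^p_{|B_v|},\ell^p_{|A_v|})$ sending $s_\alpha t_\beta$ to the matrix unit $e_{\alpha,\beta}$; running over $v$ and using that the $v$-blocks act on pairwise disjoint supports, we obtain $\mathcal{G}_k^n\cong\bigoplus_{v\in Q^0}\mathcal{B}(\ell^p_{|B_v|},\ell^p_{|A_v|})$.

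The key point is that the same analysis applies verbatim in $A$: since $(S,T,E)$ is a spatial family and $E_v\neq0$ for all $v$, the operator $S_\alpha T_\beta$ vanishes off its characteristic set $Y_\beta$ and maps $L^p(\nu_{Y_\beta})$ isometrically onto $L^p(\nu_{Y_\alpha})$ through the nonempty set underlying $E_v$, with the same disjointness of the $Y_\alpha$ and of the $Y_\beta$. Hence $\overline{\text{span}}\{S_\alpha T_\beta\}$ is isometrically isomorphic to the same direct sum $\bigoplus_v\mathcal{B}(\ell^p_{|B_v|},\ell^p_{|A_v|})$ via $S_\alpha T_\beta\mapsto e_{\alpha,\beta}$, and since $\pi(s_\alpha t_\beta)=S_\alpha T_\beta$, the map $\pi|_{\mathcal{G}_k^n}$ becomes the identity under these two identifications. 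Therefore $\pi$ is isometric on $\mathcal{G}_k^n$ for each $k$, and letting $k\to\infty$ completes the proof.

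I expect the main obstacle to be the structural identification $\mathcal{G}_k^n\cong\bigoplus_v\mathcal{B}(\ell^p_{|B_v|},\ell^p_{|A_v|})$ and its compatibility with $\pi$. Because a spectral subspace is not a subalgebra, $p$-incompressibility of $L^p$-AF-algebras (\autoref{afincompressible}) cannot be invoked directly; one must instead verify by hand that the $s_\alpha t_\beta$ behave like rectangular matrix units whose span carries a norm insensitive to the ambient $L^p$-space, and it is precisely here that the hypothesis $E_v\neq0$ enters, ensuring that the matrix dimensions $|A_v|$, $|B_v|$ are realized faithfully in $A$.
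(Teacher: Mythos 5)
Your proposal is correct, but it takes a genuinely different route from the paper's proof. The paper reduces each spectral subspace to the fixed-point algebra: for $a\in\mathcal{F}_k^n$ it chooses, for each relevant vertex $v$, a path $\tau_v$ of length $|n|$ with source $v$ (a step that silently uses that $Q$ has no sinks, a hypothesis absent from the statement but present in \autoref{isocore}), forms the norm-one element $x=\sum_v t_{\tau_v}$, checks $\|ax\|=\|a\|$ and $\|\pi(ax)\|=\|\pi(a)\|$ via $axx^*=a$, and then applies the second part of \autoref{isocore} (isometry on $\mathcal{O}^p(Q)^\gamma$, which rests on $p$-incompressibility of spatial $L^p$-AF-algebras) to $ax\in\mathcal{O}^p(Q)^\gamma$. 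You instead compute the norm on each piece $\mathcal{G}_k^n$ (the paper's $\mathcal{F}_k^n$) directly: after discarding the terms with $\sor(\alpha)\neq\sor(\beta)$ (which vanish), spatiality plus incomparability of distinct paths of equal length shows that the $v$-block acts as $C_v\otimes\mathrm{id}_{L^p(\mu_{X_v})}$ from $\ell^p_{|B_v|}\big(L^p(\mu_{X_v})\big)$ to $\ell^p_{|A_v|}\big(L^p(\mu_{X_v})\big)$, and the finite-dimensional Fubini-type estimate $\|C\otimes\mathrm{id}_{L^p}\|=\|C\|_{\mathcal{B}(\ell^p_m,\ell^p_M)}$ (true for every $p\in[1,\infty)$ and worth a line in a full write-up) together with disjointness of supports in domain and range gives $\|\sum_v c_v\|=\sup_v\|C_v\|$; the identical computation in a spatial realization of $A$, where $E_v\neq 0$ guarantees the supports of the $E_v$ are nonempty, yields the same value, so $\pi$ is isometric on $\mathcal{G}_k^n$, and the CK2 expansion (legitimate since $Q$ is regular, finite since there are no infinite receivers) makes the union over $k$ increasing. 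Your route is self-contained at the level of spectral subspaces: it bypasses \autoref{isocore} and \autoref{afincompressible} entirely (so your appeal to \autoref{isocore} for $n=0$ is superfluous, and also carries its no-sinks hypothesis), and since it never prolongs paths by $|n|$ edges it proves the proposition exactly as stated, sinks allowed. The paper's route, on the other hand, is shorter once \autoref{isocore} is available and reuses machinery needed elsewhere in the uniqueness theorems. The remaining details you should spell out --- $s_\alpha t_\beta=0$ when the sources differ, the block-diagonal norm identity, and the tensor-norm fact --- are all routine.
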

\begin{proof}
Suppose $n\geq 0$. For $k\in\N$, set $\mathcal{F}_k^n=\overline{\text{span}}\{s_\alpha t_\beta\colon |\alpha|=n+k,\ |\beta|=k\}$. Note that we have $\mathcal{O}^p(Q)^n= \overline{\bigcup_{k\in\N}\mathcal{F}_k^n}$, and that the union is increasing. In particular, if $\pi$ is an isometry on each $\mathcal{F}_k^n$, then it is an isometry on $\mathcal{O}^p(Q)^n$. By continuity, it suffices to prove that $\pi$ is isometric on $\mathcal{F}_k^n\cap L_Q$ for any $k\in\N$. 
   
Fix $a\in \mathcal{F}_k^n\cap L_Q$. Find finite sets $F\subseteq Q^{k+n}$ and $G\subseteq Q^k$ and scalars $\lambda_{\alpha,\beta}\in \C$ for $(\alpha,\beta)\in F\times G$, such that $a=\sum_{\alpha\in F,\ \beta\in G} \lambda_{\alpha,\beta}s_\alpha t_\beta$. For each $v\in \ran^{-1}(G)$, use the fact that $Q$ has no sinks to choose a path $\tau_v$ with $\sor(\tau_v)=v$ and $|\tau_v|=n$. Set $x=\sum_{v\in \ran^{-1}(G)}t_{\tau_v}$ and $x^*=\sum_{v\in \ran^{-1}(G)}s_{\tau_v}$. Note that $ax\in\mathcal{O}^p(Q)^\gamma$. As the $\tau_v$ are distinct and of equal length, they are incomparable, and hence by \autoref{propertiesspatialfam} the $t_{\tau_v}$ have orthogonal ranges. Therefore we have $\|x\|=\|x^*\|=1$. By construction, we have $xx^*=\sum_{v\in \ran^{-1}(G)}e_v$, and hence $axx^*=a$. Thus, we have 
$$\|a\|=\|axx^*\|\leq \|ax\|\leq \|a\|,$$ 
so that $\|ax\|=\|a\|$. By a similar reasoning we have $\|\pi(ax)\|=\|\pi(a)\|$. Thus, by the second part of \autoref{isocore}, we have 
\[\|a\|=\|ax\|=\|\pi(ax)\|=\|\pi(a)\|,\]
as desired.

For $n<0$, one defines $\mathcal{F}_k^n=\overline{\text{span}}\{s_\alpha t_\beta\colon |\alpha|=k,\ |\beta|=k-n\}$ for $k\in\N$, and sets $x=\sum_{v\in \ran^{-1}(F)} s_{\tau_v}$, where $\tau_v\in Q^{|n|}\cap \sor^{-1}(v)$. Then $\|x\|=\|x^*\|=1,\ xa\in\mathcal{O}^p(Q)^\gamma$ and $x^*xa=a$, and one may argue as above. We omit the details.
\end{proof}

\section{Uniqueness theorems for $L^p$-graph algebras}

In this section, we prove our main results (\autoref{gauge-uniqueness} and \autoref{uniqueness}), which are $L^p$-versions of the gauge-invariant and the Cuntz-Krieger uniqueness theorems for graph C*-algebras (see, for example,
\cite[Theorems~2.2~and~2.4]{raeburn}). We will use the fact that any graph is a directed union of countable graphs with suitable properties, and so the main body of the proofs will be concerned with the case where $Q$ is countable. 

\subsection{Cuntz-Krieger subgraphs}
In this subsection, we show that the $L^p$-operator algebra of an arbitrary graph
can be written as the direct limit, with isometric homomorphisms, of the $L^p$-operator
algebras of countable subgraphs. For this, we will 
need to recall the following definition from \cite[Subsection 2.3]{goodearl}.

\begin{df}
A subgraph $R$ of a graph $Q$ is said to be a \emph{Cuntz-Krieger subgraph}
if for every $v\in R^0$ with $0<|\mathsf{r}_R^{-1}(v)|<\infty$, 
we have $\mathsf{r}_R^{-1}(v)=\mathsf{r}_Q^{-1}(v)$. 
\end{df}

Let $Q$ be a graph and let $R$ be any subgraph. Then there is a canonical 
\emph{linear} map $L_R\to L_Q$, sending each generator of $L_R$ to the ``same" 
element in $L_Q$. This map will in general not be multiplicative, since condition CK2
for some vertices in $R$ may fail in $Q$. The condition in the definition above
guarantees that $L_R\to L_Q$ is an algebra homomorphism. 

By \cite[Proposition 2.7]{goodearl}, any graph
$Q$ is the directed union of the set CK$(Q)$ of all its countable Cuntz-Krieger subgraphs, and hence $L_Q$ is the direct limit, in the category of complex algebras, of the $L_R$, for $R\in$ CK$(Q)$. Our next result will be used to show that $\mathcal{O}^p(Q)$ is the direct limit, in the category of Banach algebras with isometric homomorphism, of the $\mathcal{O}^p(R)$, for $R\in$ CK$(Q)$; 
see \autoref{cor:DirLim}.

Recall that a measure space $(X,\mathcal{A})$ is said to be \emph{standard Borel} if there is a separable, complete metric space $Z$ such that $(X,\mathcal{A})$ is 
isomorphic to $Z$ endowed with its Borel $\sigma$-algebra. If $\mu$ is a probability
measure on $(X,\mathcal{A})$, we call the triple $(X,\mathcal{A},\mu)$ a \emph{standard Borel probability space}. It is well-known that there is a unique (up to measure-preserving bi-measurable bijections) \emph{atomless} standard Borel probability space; a result that we will use repeatedly in the following proof. 

\begin{prop}\label{prop:DirLim}
Let $Q$ be a countable graph, let $R\subseteq Q$ be a Cuntz-Krieger subgraph, and let $p\in [1,\infty)$.
Then the canonical map $L_R\to L_Q$ extends to an isometric homomorphism 
$\mathcal{O}^p(R)\to \mathcal{O}^p(Q)$. 
\end{prop}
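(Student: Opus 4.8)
The plan is to construct, for a given countable Cuntz-Krieger subgraph $R\subseteq Q$, a spatial Cuntz-Krieger $Q$-family on an $L^p$-space whose restriction to $R$ is a \emph{nondegenerate} spatial representation of $L_R$ on its own ambient $L^p$-subspace, and then to combine this with a separate faithful representation of $\mathcal{O}^p(R)$ to force isometry. More precisely, first I would fix an isometric nondegenerate spatial representation $\sigma\colon \mathcal{O}^p(R)\to \mathcal{B}(L^p(\nu))$ as provided by \autoref{opqnormproperties}, realized on a localizable (in fact, since $R$ is countable, $\sigma$-finite) measure space. The goal is to enlarge this to a spatial Cuntz-Krieger $Q$-family: for each vertex $v\in Q^0\setminus R^0$ we must adjoin a new ``block'' of the $L^p$-space carrying the idempotent $E_v$, and for each edge $a\in Q^1\setminus R^1$ we must build a spatial partial isometry $S_a$ from the block of $\sor(a)$ to (a subspace of) the block of $\ran(a)$. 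The Cuntz-Krieger subgraph condition is exactly what makes this possible without disturbing the relations already holding in $\sigma(\mathcal{O}^p(R))$: for a vertex $v\in R^0$ that is regular in $R$ with $0<|\ran_R^{-1}(v)|<\infty$, relation CK2 already holds in $\sigma(\mathcal{O}^p(R))$ and the subgraph condition guarantees $\ran_R^{-1}(v)=\ran_Q^{-1}(v)$, so no new edges into $v$ need to be added; for vertices that are infinite receivers or sources in $R$ (hence not subject to CK2 in $\mathcal{O}^p(R)$), and for brand-new vertices, there is no CK2 constraint to respect when adjoining the new edges, so one has freedom to build the $S_a$'s spatially by taking direct sums with fresh copies of an atomless standard Borel probability space (this is where the uniqueness of the atomless standard Borel probability space gets used, to arrange the geometry of the new spatial partial isometries consistently, including carving out the right sub-blocks when a regular vertex of $Q$ needs its CK2 relation to hold with contributions from both old and new edges).

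Once such a spatial Cuntz-Krieger $Q$-family $(S,T,E)$ is built on an $L^p$-space $\mathcal E$, \autoref{opquniversal} gives a canonical contractive homomorphism $\pi_Q\colon \mathcal{O}^p(Q)\to\mathcal{B}(\mathcal E)$, and by construction the composite $\mathcal{O}^p(R)\xrightarrow{\iota}\mathcal{O}^p(Q)\xrightarrow{\pi_Q}\mathcal{B}(\mathcal E)$ restricted to the $R$-block recovers (a unitarily conjugate copy of) $\sigma$ up to the compression by the idempotent $\sum_{v\in R^0}E_v$; since $\sigma$ was chosen isometric and nondegenerate, this composite is isometric on $\mathcal{O}^p(R)$. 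Therefore $\iota\colon\mathcal{O}^p(R)\to\mathcal{O}^p(Q)$ is isometric: its norm is computed from a representation through which it factors isometrically. Concretely, $\|\iota(x)\|_{\mathcal{O}^p(Q)}\geq \|\pi_Q(\iota(x))\| = \|\sigma(x)\| = \|x\|_{\mathcal{O}^p(R)}$, while $\iota$ is contractive since $\mathcal{O}^p(Q)$ is defined by a supremum over representations and every representation of $L_Q$ restricts, via the canonical linear-and-multiplicative map $L_R\to L_Q$, to a representation of $L_R$ (using the Cuntz-Krieger subgraph hypothesis to ensure multiplicativity). This last point — that contractivity of $\iota$ follows from the fact that representations of $L_Q$ pull back to representations of $L_R$ — I would state carefully, noting that a Cuntz-Krieger $Q$-family composed with the linear map $L_R\to L_Q$ yields a Cuntz-Krieger $R$-family precisely because CK2 for a regular vertex $v$ of $R$ with $0<|\ran_R^{-1}(v)|<\infty$ reads the same in $R$ and $Q$.

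The main obstacle is the explicit construction of the enlarged spatial Cuntz-Krieger $Q$-family: one must produce, simultaneously for all new vertices and edges, spatial idempotents and spatial partial isometries satisfying (CK1) and (CK2) while leaving the old $R$-representation intact, and the bookkeeping of which measure-space blocks are consumed by which edges — especially at vertices of $Q$ that are regular but receive both old and new edges — requires care. I would handle this by building the ambient space as $L^p(\nu)\oplus\bigoplus_{v} L^p(\mu_v)$ where each $\mu_v$ is a copy of the atomless standard Borel probability space (one copy per new vertex, further subdivided into atomlessly-many pieces indexed by the edges out of $v$), and defining the new partial isometries as the obvious measure-class-preserving spatial maps between these pieces, with the Radon–Nikodym cocycle absorbed into the weight function of the spatial system as in \autoref{spatialpartialiso}. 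The verification that all relations hold is then a routine but somewhat lengthy check of the kind already carried out in \autoref{finitegraphembedding}, using \autoref{spatialsemigroup} to track compositions of spatial partial isometries and \autoref{hermitianequiv} to see that the relevant idempotents are hermitian.
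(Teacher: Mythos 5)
Your overall strategy coincides with the paper's: build a Cuntz--Krieger $Q$-family whose restriction along $L_R\to L_Q$ dominates a faithful isometric representation of $\mathcal{O}^p(R)$, and get contractivity by pulling back representations of $L_Q$ to representations of $L_R$ (that direction, including the role of the Cuntz--Krieger subgraph condition in making the pullback a Cuntz--Krieger $R$-family, is fine). The gap is in the construction step: you insist on leaving the chosen representation $\sigma$ of $\mathcal{O}^p(R)$ \emph{literally intact} and adjoining fresh \emph{atomless} blocks only at vertices of $Q^0\setminus R^0$. This cannot work in general, because CK1 and CK2 rigidly tie the measure algebras at the two endpoints of every new edge to blocks of $\sigma$ that you do not control. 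Concretely, let $R$ have two vertices $v,w$ and no edges (a Cuntz--Krieger subgraph of any $Q$), and let $Q$ add a single edge $a$ with $\sor(a)=w$ and $\ran(a)=v$. Choose $\sigma$ isometric and nondegenerate with $E_v$ supported on a single atom and $E_w$ supported on $[0,1]$ with Lebesgue measure. In $Q$ the vertex $v$ is regular, so any Cuntz--Krieger $Q$-family extending $\sigma$ must satisfy $E_v=S_aT_a$ and $T_aS_a=E_w$; thus $S_a$ would be a spatial partial isometry implemented by a bijective measurable set transformation from an atomless measure algebra onto the two-element algebra of a point, which does not exist. So no $Q$-family restricts to this $\sigma$, and your blanket choice of atomless blocks at new vertices likewise clashes with CK1 as soon as a new vertex receives an edge from an old vertex whose block has atoms. (A smaller point: for a Cuntz--Krieger subgraph, a vertex that is regular in $Q$ never receives both old and new edges, so the ``mixed CK2'' situation you worry about does not occur; genuine mixing happens only at infinite receivers of $R$, where CK2 is not imposed but the ranges of old and new edges must still be disjoint inside the old block, which $\sigma$ may already exhaust --- another reason you cannot keep $\sigma$ untouched.)

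The missing idea, and the paper's fix, is to require the restriction to $L_R$ to be only an \emph{amplification} of $\sigma$ rather than $\sigma$ itself. One first replaces $\sigma$ by $\sigma\otimes\mathrm{id}_{L^p([0,1])}$, so that every vertex block becomes (isomorphic to) the atomless standard Borel probability space and hence all blocks are mutually isomorphic; then, at each vertex of $R$ that is singular in $R$, one replaces its block $X_v$ by $X_v\times\ran_Q^{-1}(v)$ with counting measure on the second factor, so that each incoming $Q$-edge is assigned its own slice and CK1, CK2 and range-disjointness hold by construction. Since amplifications induce the same norm on $L_R$, the lower bound $\|x\|_{\mathcal{O}^p(Q)}\geq\|x\|_{\mathcal{O}^p(R)}$ for $x\in L_R$ survives, and your contractivity argument then completes the proof. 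With this normalization-and-amplification step added, your outline becomes essentially the proof given in the paper.
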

\begin{proof}
The case $p=2$ is well-known, so we will assume that $p\neq 2$. By \cite[Proposition~9.8]{phillips viola}, there exist 
a $\sigma$-finite measure space $(X,\mu)$ and an isometric, nondegenerate representation
$\varphi\colon \mathcal{O}^p(R)\to \mathcal{B}(L^p(X_0,\mu_0))$. Let $(S,T,E)=(\varphi(s),\varphi(t),\varphi(e))$ be the associated Cuntz-Krieger $R$-family.
Find a standard Borel probability space $(X,\mu)$ 
(which may or may not have atoms) for which there is an isometric isomorphism $L^p(X,\mu)\cong L^p(X_0,\mu_0)$. Using the Lebesgue measure on $[0,1]$, and replacing $X$ with $X\times [0,1]$ and $\varphi$ with $\varphi\otimes\id_{L^p([0,1])}$ (which induces the same norm as $\varphi$ on $\mathcal{O}^p(R)$), we may assume that $(X,\mu)$ is standard Borel and atomless; recall that there is a unique measure space with these properties. For $v\in R^0$, the support $X_v$ of the associated hermitian idempotent $E_v$ cannot be empty (since otherwise $\varphi$ would not be injective). If $\mu_v$ denotes the restriction of $\mu$ to $X_v$, by the argument just described we can 
assume that $(X_v,\mu_v)$ is itself also standard Borel and atomless.
In particular, and we will need this later, for any $v,w\in R^0$ there is a measure preserving isomorphism $(X_v,\mu_v)\cong (X_w,\mu_w)$.

We will now construct an atomless standard Borel probability space $(Y,\nu)$ and a Cuntz-Krieger $Q$-family $(S',T',E')$ on $(Y,\nu)$ such that the restriction to $L_R$ of the associated representation of $L_Q$
is an amplification of the original representation $\varphi$ on $(X,\mu)$. Once we 
accomplish this, it will follow that the restriction of the norm on $\mathcal{O}^p(Q)$ to $L_R$ agrees with the norm on $\mathcal{O}^p(R)$, thus proving the proposition.

To construct the space $Y$ and the representation of $L_Q$, we first classify the vertices of $Q$ into three categories. Any vertex $v\in Q^0$ satisfies precisely one of the following:
\begin{enumerate}
\item $v\in R^0$ is regular in $R$. In this case, we set $(Y_v,\nu_v)=(X_v,\mu_v)$
\item $v\in R^0$ is singular in $R$. In this case, we set 
\[(Y_v,\nu_v)=\big(X_v\times \mathsf{r}_Q^{-1}(v),\mu_v\times \mathrm{counting}\big).\]
\item $v\notin R^0$. In this case, we set 
\[(Y_v,\nu_v)=\big([0,1]\times\mathsf{r}_Q^{-1}(v),\mathrm{Lebesgue}\times\mathrm{counting}\big).\]
\end{enumerate}	
Note that $(Y_v,\nu_v)$ is standard Borel and atomless in all cases. We let $E'_v$
denote the hermitian idempotent associated to the set $Y_v$.

Set $(Y,\nu)=\bigsqcup_{v\in Q^0}(Y_v,\nu_v)$. For $a\in Q^1$, we will define $S_a'$ according to the cases we describe below; in all instances, $T_a'$ will denote the transpose of $S_a'$. Fix $a\in Q^1$, and set $\mathsf{s}_Q(a)=w$ and $\mathsf{r}_Q(a)=v$. We enumerate all possible cases using the classification of vertices into the cases (1), (2) and (3) described above\footnote{Some of these cases could be grouped together, but we describe them all explicitly for clarity.}:

\begin{enumerate}
	\item[(1)$\to$(1):] $w\in R^0$ is regular in $R$ and $v\in R^0$ is regular in $R$. In this case, we necessarily have $a\in R$ since $R\subseteq Q$ is a Cuntz-Krieger subgraph, and we set $S_a'=S_a$.
	\item[(2)$\to$(1):] $w\in R^0$ is singular in $R$ and $v\in R^0$ is regular in $R$. In this case, we necessarily have $a\in R$ since $R\subseteq Q$ is a Cuntz-Krieger subgraph, and we let $S_a'$ be given as follows:
	\[\xymatrix{Y_w \ar[r]^-{\cong}& X_v \ar[rr]^-{\id_{X_v}\times\{a\}}&& X_v\times\mathsf{r}_Q^{-1}(v).}\]
	\item[(3)$\to$(1):] $w\notin R^0$ and $v\in R^0$ is regular in $R$. This case cannot occur since $R\subseteq Q$ is a Cuntz-Krieger subgraph.
	
	\item[(1)$\to$(2):] $w\in R^0$ is regular in $R$ and $v\in R^0$ is singular in $R$. In this case, there are two possibilities:
	\begin{itemize}
		\item $a\in R$. In this case, we set $S_a'=S_a\times\{a\}$.
		\item $a\notin R$. In this case, we let $S_a'$ denote the following composition:
		\[\xymatrix{Y_w \ar[r]^-{\cong}& X_v \ar[rr]^-{\id_{X_v}\times\{a\}}&& X_v\times\mathsf{r}_Q^{-1}(v).}\]
	\end{itemize} 
	\item[(2)$\to$(2):] $w\in R^0$ is singular in $R$ and $v\in R^0$ is singular in $R$. In this case, there are two possibilities:
	\begin{itemize}
		\item $a\in R$. In this case, we set $S_a'=S_a\times\id$.
		\item $a\notin R$. In this case, we let $S_a'$ denote the following composition:
		\[\xymatrix{Y_w \ar[r]^-{\cong}& X_v \ar[rr]^-{\id_{X_v}\times\{a\}}&& X_v\times\mathsf{r}_Q^{-1}(v).}\]
	\end{itemize} 
	\item[(3)$\to$(2):] $w\notin R^0$ and $v\in R^0$ is singular in $R$.  In this case, we let $S_a'$ denote the following composition:
		\[\xymatrix{Y_w \ar[r]^-{\cong}& X_v \ar[rr]^-{\id_{X_v}\times\{a\}}&& X_v\times\mathsf{r}_Q^{-1}(v).}\]
	\item[(1)$\to$(3):] $w\in R^0$ is regular in $R$ and $v\notin R^0$. In this case, we let $S_a'$ denote the following composition:
		\[\xymatrix{Y_w \ar[r]^-{\cong}& [0,1] \ar[rr]^-{\id_{[0,1]}\times\{a\}}&& [0,1]\times\mathsf{r}_Q^{-1}(v).}\]
	\item[(2)$\to$(3):] $w\in R^0$ is singular in $R$ and $v\notin R^0$. In this case, we let $S_a'$ denote the following composition:
		\[\xymatrix{Y_w \ar[r]^-{\cong}& [0,1] \ar[rr]^-{\id_{[0,1]}\times\{a\}}&& [0,1]\times\mathsf{r}_Q^{-1}(v).}\]	\item[(3)$\to$(3):] $w\notin R^0$ and $v\notin R^0$.  In this case, we let $S_a'$ denote the following composition:
		\[\xymatrix{Y_w \ar[r]^-{\cong}& [0,1] \ar[rr]^-{\id_{[0,1]}\times\{a\}}&& [0,1]\times\mathsf{r}_Q^{-1}(v).}\]\end{enumerate}
We claim that the above defines a Cuntz-Krieger $Q$-family; for this, we need to check the conditions in \autoref{spatialckfam}. Of these, all but CK1 and CK2 are immediate. CK1 for $(S',T',E')$ follows easily by construction (using that $(S,T,E)$ satisfies CK1). Finally, CK2 is checked separately in three cases, according to whether the vertex $v\in Q$ is as in (1), (2), or (3). When $v$ is as in (1), that is, regular in $R$, then CK2 for $(S',T',E')$ follows immediately from CK2 for $(S,T,E)$. When 
$v$ is as in (2), that is, singular in $R$, then CK2 does not need to be checked.
Finally, when $v$ is as in (3), that is, not in $R$, then CK2 is immediate by construction. 

It is clear from the definition that, whenever $a\in R$, then $S_a'$ is an amplification of $S_a$. This finishes the proof.
\end{proof}

\begin{cor}\label{cor:DirLim}
	Let $Q$ be a graph and let $p\in [1,\infty)$.
	Then $\mathcal{O}^p(Q)$ is the direct limit, in the category of Banach algebras with isometric homomorphism, of the $\mathcal{O}^p(R)$, for $R\in$ CK$(Q)$. Moreover, if every cycle in $Q$ has an entry, then the direct limit can be indexed over all countable Cuntz-Krieger subgraphs where every cycle has an entry.
\end{cor}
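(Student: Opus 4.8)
The plan is to realize the direct limit explicitly, using the countable case \autoref{prop:DirLim} to obtain the connecting maps, and then to identify the limit with $\mathcal{O}^p(Q)$ by checking that it satisfies the universal property of \autoref{opquniversal} and invoking \autoref{uniquenessiso}. Some bookkeeping first. By \cite[Proposition~2.7]{goodearl}, $\mathrm{CK}(Q)$ is directed under inclusion with union $Q$, and the canonical maps $L_R\to L_Q$ are algebra homomorphisms, so $L_Q=\varinjlim_R L_R$ in complex algebras. If $R\subseteq S$ lie in $\mathrm{CK}(Q)$, then $R$ is a Cuntz-Krieger subgraph of $S$: for $v\in R^0$ with $0<|\ran_R^{-1}(v)|<\infty$ we have $\ran_R^{-1}(v)=\ran_Q^{-1}(v)$, and $\ran_R^{-1}(v)\subseteq\ran_S^{-1}(v)\subseteq\ran_Q^{-1}(v)$ forces equality throughout. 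The same computation shows that a vertex regular in $R$ is regular in $S$ (and in $Q$) with the same incoming edges; consequently the restriction to $R$ of a Cuntz-Krieger $S$- or $Q$-family is a Cuntz-Krieger $R$-family.

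Since $S$ is countable, \autoref{prop:DirLim} yields an isometric homomorphism $\iota_{S,R}\colon\mathcal{O}^p(R)\to\mathcal{O}^p(S)$ extending $L_R\to L_S$; compatibility ($\iota_{T,S}\iota_{S,R}=\iota_{T,R}$) holds on the dense subalgebra $L_R$, hence everywhere. Let $B:=\varinjlim_{R\in\mathrm{CK}(Q)}\mathcal{O}^p(R)$, with canonical isometric maps $\iota_R\colon\mathcal{O}^p(R)\to B$; then $L_Q=\bigcup_R\iota_R(L_R)$ is dense in $B$. A direct limit of $L^p$-operator algebras along isometric homomorphisms embeds isometrically into an ultraproduct of them (over a suitable ultrafilter on $\mathrm{CK}(Q)$), which is an $L^p$-operator algebra because an ultraproduct of $L^p$-spaces is again an $L^p$-space; so $B$ is an $L^p$-operator algebra, and it is unitizable since it carries the contractive approximate identity $\big(\sum_{v\in S}e_v\big)_{S\subseteq Q^0\text{ finite}}$ consisting of idempotents (argue as in \autoref{opqapproxid}; see also the discussion around \autoref{df:unitizable}). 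I claim the generators $(s,t,e)$ form a Cuntz-Krieger $Q$-family in $B$. Every algebraic relation, norm bound and partial-isometry condition of \autoref{spatialckfam} involves only finitely many of the $s_a,t_a,e_v$, so it holds already inside some $\mathcal{O}^p(R)\hookrightarrow B$ by \autoref{stespatial}; for CK2 at a regular vertex $v$, choose $R\in\mathrm{CK}(Q)$ containing $\{v\}\cup\ran_Q^{-1}(v)\cup\sor_Q(\ran_Q^{-1}(v))$, so that $v$ is regular in $R$ with $\ran_R^{-1}(v)=\ran_Q^{-1}(v)$. For the hermiticity of $e_v$, $s_at_a$ and $t_as_a$: if $f$ is an idempotent that is hermitian in $\mathcal{O}^p(R)$, then for each $\lambda\in\R$ the element $\exp(i\lambda f)=1+(e^{i\lambda}-1)f$ has $M(\mathcal{O}^p(R))$-norm at most $1$, i.e.\ $\|(1+(e^{i\lambda}-1)f)d\|\le\|d\|$ and $\|d(1+(e^{i\lambda}-1)f)\|\le\|d\|$ for all $d\in\mathcal{O}^p(R)$; since each $\iota_R$ is isometric and $\{R\in\mathrm{CK}(Q): f\in\mathcal{O}^p(R)\}$ is cofinal in $\mathrm{CK}(Q)$, these inequalities hold for $d$ in a dense subset of $B$, hence for all $d\in B$ by continuity, so $\|\exp(i\lambda f)\|_{M(B)}\le1$ and $f$ is hermitian in $B$. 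As $L_Q$ is dense in $B$, this Cuntz-Krieger $Q$-family generates $B$.

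It remains to check that $B$ has the universal property of \autoref{opquniversal}. Given a Cuntz-Krieger $Q$-family $(S,T,E)$ in a unitizable $L^p$-operator algebra $A$, its restriction to each $R\in\mathrm{CK}(Q)$ is a Cuntz-Krieger $R$-family by the first paragraph, so \autoref{opquniversal} produces compatible contractive homomorphisms $\pi_R\colon\mathcal{O}^p(R)\to A$, which assemble into a contractive $\pi\colon B\to A$ with $\pi(s_a)=S_a$, $\pi(t_a)=T_a$, $\pi(e_v)=E_v$; uniqueness follows from density of $L_Q$. By \autoref{uniquenessiso}, the canonical homomorphism $\mathcal{O}^p(Q)\to B$ is an isometric isomorphism, which is exactly the first assertion. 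For the second, let $\mathrm{CK}'(Q)\subseteq\mathrm{CK}(Q)$ be the set of those $R$ in which every cycle has an entry. A cofinal subset of a directed set is directed, and the direct limit is unchanged on passing to a cofinal subsystem, so it suffices that $\mathrm{CK}'(Q)$ be cofinal. Given $R\in\mathrm{CK}(Q)$, build $R=R_0\subseteq R_1\subseteq\cdots$ in $\mathrm{CK}(Q)$ where $R_{n+1}$ is a countable Cuntz-Krieger subgraph of $Q$ (obtained by closing up as in \cite{goodearl}) containing $R_n$ together with, for each of the countably many cycles of $R_n$ having no entry in $R_n$, an entry for it in $Q$ (which exists by hypothesis) and the source of that edge. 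Then $R_\infty:=\bigcup_n R_n\in\mathrm{CK}(Q)$ contains $R$, and any cycle of $R_\infty$ lies in some $R_n$ and hence has an entry in $R_{n+1}\subseteq R_\infty$, so $R_\infty\in\mathrm{CK}'(Q)$.

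The step I expect to be the main obstacle is verifying that $(s,t,e)$ is a genuine Cuntz-Krieger $Q$-family in the abstract limit $B$ — specifically the hermiticity of the idempotents $e_v$, $s_at_a$ and $t_as_a$ there, which is delicate because hermiticity is defined through the non-canonical norm on the unitization; the density argument above, which relies on the connecting maps being isometric, is what makes it go through. Everything else is bookkeeping with Cuntz-Krieger subgraphs together with \autoref{prop:DirLim} and \autoref{uniquenessiso}.
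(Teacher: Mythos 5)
Your argument is correct, but it takes a more roundabout (and more self-contained) route than the paper. The paper's proof is a two-line deduction: it cites \autoref{prop:DirLim} for the isometric connecting maps and Goodearl's result that $L_Q=\varinjlim_R L_R$ (respectively, that the limit can be taken over subgraphs in which every cycle has an entry), leaving the identification of the completed direct limit with $\mathcal{O}^p(Q)$ implicit as a norm comparison on $L_R\subseteq L_Q$. You instead build the abstract Banach-algebra limit $B$, check that it is an $L^p$-operator algebra (via ultraproducts of $L^p$-spaces), that it is unitizable, that the generators form a Cuntz-Krieger $Q$-family in $B$ --- including the genuinely delicate point that hermiticity of $e_v$, $s_at_a$, $t_as_a$ passes to $B$, which you handle correctly through the multiplier-norm characterization of $\exp(i\lambda f)=1+(e^{i\lambda}-1)f$ and the isometry of the connecting maps --- and that $B$ has the universal property, so that \autoref{uniquenessiso} identifies $B$ with $\mathcal{O}^p(Q)$. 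What this buys you is that \autoref{prop:DirLim} is only ever invoked with a \emph{countable} ambient graph (pairs $R\subseteq S$ in $\mathrm{CK}(Q)$), whereas the paper's shortcut implicitly needs the norm identity $\|x\|_{\mathcal{O}^p(Q)}=\|x\|_{\mathcal{O}^p(R)}$ for possibly uncountable $Q$, i.e.\ an extension of \autoref{prop:DirLim} beyond its stated hypotheses (its proof does extend, since only countability of $R$ is used, but this is not spelled out); your universal-property detour sidesteps that issue entirely, at the cost of the extra verifications above and of the (standard but external) fact that ultraproducts of $L^p$-spaces are $L^p$-spaces. For the second assertion you reprove cofinality of the subgraphs with entries by an explicit closing-up construction rather than citing \cite[Proposition 2.7]{goodearl} as the paper does; your construction is correct (the union $R_\infty=\bigcup_n R_n$ is again a Cuntz-Krieger subgraph because the finiteness condition at a vertex is inherited from some $R_n$). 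Your preliminary checks --- that $R\in\mathrm{CK}(Q)$ is a Cuntz-Krieger subgraph of any larger $S\in\mathrm{CK}(Q)$ and that restriction of a Cuntz-Krieger $Q$- or $S$-family to $R$ is again a Cuntz-Krieger $R$-family --- are exactly the points that make both approaches work, and they are argued correctly.
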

\begin{proof} 
The first assertion follows from \autoref{prop:DirLim} together with the fact, proved in \cite[Proposition 2.7]{goodearl}, that $L_Q$ is the direct limit of $L_R$, for $R\in$ CK$(Q)$. For the second assertion, it suffices to observe that by \cite[Proposition 2.7]{goodearl}, if every cycle in $Q$ has an entry, then $L_Q$ is the direct limit of $L_R$, for $R\in$ CK$(Q)$ such that every cycle has an entry. 
\end{proof}

\subsection{Uniqueness theorems}

The following lemma will allow us to assume that our countable graph has no sources, sinks, infinite emitters or infinite receivers. 
We follow some arguments in \cite[Section 8]{cortinas rodriguez}, with the necessary adjustments.

\begin{lma}\label{desingularize}
    Let $p\in[1,\infty)\setminus\{2\}$, and let $Q$ be a countable graph. Let $(X,\mathcal{B},\mu)$ be a localizable measure space, let $(S,T,E)$ be a Cuntz-Krieger $Q$-family in $\mathcal{B}(L^p(\mu))$ and let $\pi\colon\mathcal{O}^p(Q)\rightarrow \mathcal{B}(L^p(\mu))$ be the canonical contractive homomorphism. Then there exist
    \begin{enumerate}[(a)]
        \item a countable graph $Q'$ such that $Q'$ has no sources, sinks, infinite emitters or infinite receivers,

        \item a localizable measure space $(Y,\mathcal{C},\nu)$,

        \item a Cuntz-Krieger $Q'$-family $(S',T',E')$ in $\mathcal{B}(L^p(\nu))$,

        \item isometric homomorphisms $\phi\colon \mathcal{O}^p(Q)\rightarrow\mathcal{O}^p(Q')$ and $\sigma\colon \mathcal{B}(L^p(\mu))\rightarrow\mathcal{B}(L^p(\nu))$
    \end{enumerate}
    such that if $\pi'\colon\mathcal{O}^p(Q')\rightarrow\mathcal{B}(L^p(\nu))$ is the canonical homomorphism associated to $(S',T',E')$, then $\sigma\circ\pi=\pi'\circ\phi$. Moreover, if each cycle of $Q$ has an entry, then each cycle of $Q'$ has an entry.
\end{lma}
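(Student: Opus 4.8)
The plan is to carry out a desingularization of $Q$, adapting the procedure of \cite[Section~8]{cortinas rodriguez} and performing it in a two-sided form so as to remove not only sinks and infinite emitters but also sources and infinite receivers. First I would build $Q'$ by attaching to each sink and each infinite emitter an outgoing infinite tail and rerouting the superfluous emitted edges along it, and dually attaching to each source and each infinite receiver an incoming infinite tail and rerouting the superfluous received edges along it; a vertex belonging to several singular classes is handled by performing the two halves of the construction in succession and checking that they do not interfere. Since $Q$ is countable, so is $Q'$, and by construction $Q'$ has no sources, sinks, infinite emitters or infinite receivers. For the last clause of the lemma, every cycle of $Q'$ arises from a cycle of $Q$ by replacing certain edges with longer paths through the attached tails, and any vertex lying on such a cycle that was desingularized acquires an extra incoming edge (a tail edge, or a rerouted received edge) which serves as an entry, while cycles avoiding the desingularized vertices are unchanged and keep whatever entry they had.

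Next I would produce $\phi\colon\mathcal{O}^p(Q)\to\mathcal{O}^p(Q')$ via the universal property \autoref{opquniversal}, by exhibiting a Cuntz--Krieger $Q$-family inside $\mathcal{O}^p(Q')$: the idempotents $\{e'_v\}_{v\in Q^0}$ together with, for each $a\in Q^1$, the element $s'_{\widehat a}$ and its reverse $t'_{\widehat a}$, where $\widehat a=a$ if $a$ is not rerouted and $\widehat a$ is the corresponding path in $Q'$ through the appropriate tail otherwise. Checking the relations of \autoref{spatialckfam} for this family is bookkeeping: the relations in~(1) follow from $\ran_{Q'}(\widehat a)=\ran_Q(a)$ and $\sor_{Q'}(\widehat a)=\sor_Q(a)$; CK1 follows because two distinct rerouted edges give incomparable paths in $Q'$, so the relevant products vanish as in \autoref{tsrelationcomparable}; and CK2 at a regular vertex $v$ of $Q$ follows from CK2 in $Q'$ at $v$ and at the intervening tail vertices, after noting that $s'_{\widehat a}t'_{\widehat a}=s'_{b}t'_{b}$ where $b$ is the final edge of $\widehat a$. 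The resulting $\phi$ is contractive; its isometry is established at the end.

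The core of the argument is the construction of the data (b)--(d) extending the given representation. The family $(S,T,E)$ on $L^p(\mu)$ is automatically spatial by \autoref{hermitianequiv} and \autoref{spatial}; write $X_v=\mathrm{supp}(E_v)$ for $v\in Q^0$. I would form $(Y,\nu)$ by adjoining, for each new vertex of each tail, a fresh summand which is a copy of a suitable measurable piece of $(X_v,\mu|_{X_v})$, where $v\in Q^0$ is the vertex to which the tail is attached, these being glued together so that the Cuntz--Krieger relations can be met, in the spirit of the construction in the proof of \autoref{prop:DirLim}; since $Q'$ is countable, $(Y,\nu)$ is a countable disjoint union of localizable spaces and hence localizable, and $L^p(\nu)=L^p(\mu)\oplus(\text{new part})$. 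On $L^p(\nu)$ I would set $E'_v=E_v\oplus 0$ for $v\in Q^0$ and $S'_a=S_a\oplus 0$ for un-rerouted $a\in Q^1$, and choose the partial isometries attached to the tail and rerouted edges recursively down each tail --- peeling off at each step one of the orthogonal idempotents $S_aT_a$, or the residual part of $E_v$ not covered by them --- so that CK1 and CK2 hold in $Q'$ and, crucially, $S'_{\widehat a}=S_a\oplus 0$ for every $a\in Q^1$. Then $(S',T',E')$ is a Cuntz--Krieger $Q'$-family; let $\pi'$ be its canonical homomorphism and let $\sigma\colon\mathcal{B}(L^p(\mu))\to\mathcal{B}(L^p(\nu))$ be $c\mapsto c\oplus 0$, which is an isometric homomorphism. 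By construction $\sigma\circ\pi$ and $\pi'\circ\phi$ agree on the generators of $L_Q$, hence on all of $\mathcal{O}^p(Q)$ by continuity, and the clause about cycles follows from the first paragraph.

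Finally, $\phi$ is contractive, being given by the universal property, so $\|\phi(x)\|\le\|x\|$; for the reverse inequality, running the construction of the previous paragraph for an \emph{arbitrary} representation $\pi_0$ of $\mathcal{O}^p(Q)$ in place of $\pi$ produces an isometric $\sigma_0$ and a representation $\pi_0'$ of $\mathcal{O}^p(Q')$ with $\sigma_0\circ\pi_0=\pi_0'\circ\phi$, whence $\|\pi_0(x)\|=\|\sigma_0(\pi_0(x))\|=\|\pi_0'(\phi(x))\|\le\|\phi(x)\|$, and taking the supremum over all $\pi_0$ gives $\|x\|\le\|\phi(x)\|$. I expect the main obstacle to be the third paragraph: choosing the new measure spaces and the tail partial isometries simultaneously so that every Cuntz--Krieger relation holds in $Q'$ while the tail-compositions reproduce $(S,T,E)$, together with keeping the bookkeeping straight for vertices of mixed singular type.
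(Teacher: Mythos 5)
Your proposal follows essentially the same route as the paper: the same two-sided desingularization of $Q$ (tails/heads with rerouted edges, and the same case analysis for entries of cycles), the map $\phi$ induced by sending each edge to its rerouted path, and isometry of $\phi$ obtained by extending an arbitrary spatial representation of $L_Q$ to one of $L_{Q'}$ on an enlarged $L^p$-space with a corner embedding $\sigma$ and taking suprema, exactly as in the paper. The step you flag as the main obstacle is carried out in the paper by the explicit choices $Y_v=X_v\times\N$ for sinks/sources and $Y_{v_n}=X_v'\sqcup\bigsqcup X_{a_i}$ for infinite emitters/receivers, which is precisely the ``peeling off idempotents down the tail'' mechanism you describe.
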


\begin{proof}
    Let $Q'$ be the graph obtained in the following way. If $v\in Q^0$ is a sink, add an infinite tail
    \begin{center}
        \begin{tikzcd}
            v\arrow[r,"f_1"] &v_1\arrow[r,"f_2"] &v_2\arrow[r,"f_3"] &\dots\rlap{\ \ .}
        \end{tikzcd}
    \end{center}
    If $v$ is a source, add an infinite head
    \begin{center}
        \begin{tikzcd}
            \dots\arrow[r,"f_3"] &v_2\arrow[r,"f_2"] &v_1\arrow[r,"f_1"] &v\rlap{\ \ .}
        \end{tikzcd}
    \end{center}
    If $v$ is an infinite emitter with $\sor^{-1}(v)=a_1,a_2,\dots$, remove each $a_n$ and replace with $g_n f_n\cdots f_1$ in the following construction
    \begin{center}
    \begin{tikzcd}
        v \arrow[r,"f_1"] & v_1\arrow[r,"f_2"]\arrow[d,"g_1"] &v_2\arrow[r,"f_3"]\arrow[d,"g_2"] &v_3\arrow[r,"f_4"]\arrow[d,"g_3"] &\dots\\
        &\ran(a_1) &\ran(a_2) &\ran(a_3) &\dots
    \end{tikzcd}
    \end{center}
    Similarly, if $v$ is an infinite receiver with $\ran^{-1}(v)=a_1,a_2,\dots$ then remove each $a_n$ and replace with $f_1\cdots f_n g_n$ as follows
    \begin{center}
        \begin{tikzcd}
            \dots\arrow[r,"f_4"] &v_3\arrow[r,"f_3"] &v_2\arrow[r,"f_2"] &v_1\arrow[r,"f_1"] &v\rlap{\ \ .}\\
            \dots &\sor(a_3)\arrow[u,"g_3"] &\sor(a_2)\arrow[u,"g_2"] &\sor(a_1)\arrow[u,"g_1"]
        \end{tikzcd}
    \end{center}
    
    Set $F=\{a\in Q^1\colon |\sor^{-1}(\sor(a))|=\infty\}$ and set $G=\{a\in Q^1\colon |\ran^{-1}(\ran(a))|=\infty\}$. We write $a_n(v)$ for elements of $F$ and $G$, or just $a_n$ if the vertex $v$ is clear. The inclusion of $Q^0\cup (Q^1\setminus(F\cup G))$ in $Q'$ together with the assignments $a_n\mapsto g_n f_n\cdots f_1$, for $a_n\in F$, and $a_n\mapsto f_1\cdots f_n g_n$, for $a_n\in G$, define an injective homomorphism $\phi\colon L_Q\rightarrow L_{Q'}$.

    We note that all cycles of $Q'$ are of the form $\phi(c)$ for some cycle $c$ in $Q$. Fixing a cycle $c$ in $Q$, we want to prove that $\phi(c)$ has an entry. Clearly adding heads and tails to sinks and sources does not affect this property. Suppose first that no vertex of $c$ is an infinite receiver. Then no edge in $G$ can be an entry for $c$. If some $a_n(v)\in F$ is an entry for $c$, then $g_n(v)$ is an entry for $\phi(c)$, and if the entry for $c$ belongs to $Q^1\setminus(F\cup G)$, then the entry is preserved by $\phi$. Now suppose that some vertex $v$ of $c$ is an infinite receiver. Then $c$ must contain some edge entering $v$, but it cannot contain all of them as $c$ is finite. Let $a_n(v)$ be an edge in $c$ such that $a_m(v)$ does not belong to $c$ for $m> n$. Then $f_{n+1}(v)$ is an entry for $c$.
    
    We want to show that the homomorphism $\phi$ extends to an isometry $\tilde{\phi}\colon\mathcal{O}^p(Q)\rightarrow\mathcal{O}^p(Q')$. In order to show this, it suffices to show that, given a spatial representation $\varphi\colon L_Q\rightarrow\mathcal{B}(L^p(\mu))$, we can find a localizable measure space $(Y,\mathcal{C},\nu)$, a spatial representation $\varphi'\colon L_{Q'}\rightarrow\mathcal{B}(L^p(\nu))$, and a spatial isometry $s\colon L^p(\mu)\rightarrow L^p(\nu)$ with reverse $t$, such that for the map $\sigma\colon \mathcal{B}(L^p(\mu))\rightarrow\mathcal{B}(L^p(\nu))$ given by $\sigma(d)= sdt$ for $d\in\mathcal{B}(L^p(\mu))$, we have $\sigma\circ\varphi=\varphi'\circ\phi$. Then, if $F$ and $F'$ are the sets of spatial representations of $L_Q$ and $L_{Q'}$, respectively, we have 
    $$\|\phi(a)\|=\sup_{\varphi'\in F'}\|\varphi'(\phi(a))\|=\sup_{\varphi\in F}\|\varphi(a)\|=\|a\|$$ 
    for all $a\in L_Q$. We set out to prove this claim. 
    
    Let $(X,\mathcal{B},\mu)$ be a localizable measure space and let $\varphi\colon L_Q\rightarrow \mathcal{B}(L^p(\mu))$ be a spatial representation. Let $H$ be the set of sources, sinks, infinite emitters and infinite receivers of $Q$. For $v\in H$, we define a measure space $Y_v$ as follows. We use the notation of \autoref{orthogonal}, so that $\varphi(s_{\alpha})$ is the spatial partial isometry associated to the spatial system $(X_{\sor(\alpha)},X_\alpha,\eta_\alpha,f_\alpha)$ for each $\alpha\in Q^*$, and we regard $\N$ as a measure space with counting measure.
    \begin{enumerate}[(i)]
        \item If $v$ is a sink or a source, set $Y_v=X_v\times\N$.

        \item If $v$ is an infinite emitter, set $X'_v=X_v\setminus\bigsqcup_{i\in\N} X_{a_i}$, set $Y_{v_n}=X'_v\sqcup \bigsqcup_{i\leq n}X_{a_i}$ and define $Y_v=\bigsqcup_{n\in \N} Y_{v_n}$.
        
        \item If $v$ is an infinite receiver, set $X'_v=X_v\setminus\bigsqcup_{i\in\N} X_{a_i}$, set $Y_{v_n}=X'_v\sqcup \bigsqcup_{i\geq n}X_{a_i}$ and define $Y_v=\bigsqcup_{n\in \N} Y_{v_n}$.
    \end{enumerate}
    We then set $Y=X\sqcup\bigsqcup_{v\in H} Y_v$ and let $\nu$ be the inherited measure on $Y$. We choose $s$ to be the spatial isometry $L^p(X)\rightarrow L^p(Y)$ induced by the inclusion $X\subseteq Y$, and we let $t$ be its reverse. We let $\varphi'\colon L_{Q'}\rightarrow \mathcal{B}(L^p(\nu))$ be the extension of $\varphi$ along $\phi$ determined as follows.
    \begin{enumerate}
        \item Suppose $v$ is a sink or a source. Let $\tau_n$ be the isometric spatial isomorphism $L^p(\nu_{X_v})\rightarrow L^p(\nu_{X_v\times \{n\}})$ induced by the identification $X_v\cong X_v\times\{n\}$. Set $\varphi'(e_{v_n})=\text{id}_{L^p(\nu_{X_v\times\{n\}})}$. If $v$ is a sink, set $\varphi'(s_{f_n})=\tau_n\tau_{n-1}^{-1}$ and set $\varphi'(t_{f_n})=\tau_{n-1}\tau_n^{-1}$. If $v$ is a source, set $\varphi'(s_{f_n})=\tau_{n-1}\tau_n^{-1}$ and set $\varphi'(t_{f_n})=\tau_n\tau_{n-1}^{-1}$.

        \item Suppose $v$ is an infinite emitter. Set $\varphi'(e_{v_n})=\text{id}_{L^p(\nu_{Y_n})}$. Let $\varphi'(s_{f_n})$ be the spatial isometry $L^p(\nu_{Y_{n-1}})\rightarrow L^p(\nu_{Y_n})$ induced by the inclusion $Y_{n-1}\subseteq Y_n$, and let $\varphi'(t_{f_n})$ be its reverse. Let $\varphi'(s_{g_n})$ be the spatial partial isometry $L^p(\nu_{Y_{v_n}})\rightarrow L^p(\nu_{X_{a_n}})$ induced by the inclusion $Y_{v_n}\subseteq X_v$ followed by the measurable set transformation $\eta_{a_n}$ from $X_v$ to $X_{a_n}$, and let $\varphi'(t_{g_n})$ be its reverse. 
        
        \item Suppose $v$ is an infinite receiver. Set $\varphi'(e_{v_n})=\text{id}_{L^p(\nu_{Y_n})}$. Let $\varphi'(s_{f_n})$ be the spatial isometry $L^p(\nu_{Y_n})\rightarrow L^p(\nu_{Y_{n-1}})$ induced by the inclusion $Y_n\subseteq Y_{n-1}$, and let $\varphi'(s_{g_n})$ be the spatial partial isometry $L^p(\nu_{X_{\sor(a_n)}})\rightarrow L^p(\nu_{Y_n})$ induced by the measurable set transformation $\eta_{a_n}$ from $X_{\sor(a_n)}$ to $X_{a_n}$ followed by the inclusion $X_{a_n}\subseteq Y_n$. Let $\varphi'(t_{f_n})$ and $\varphi'(t_{g_n})$ be the respective reverses.
    \end{enumerate}

One checks that $\varphi'$ is a well-defined spatial representation of $L_{Q'}$, and that we have $\sigma\circ\varphi=\varphi'\circ\phi$. Hence $\phi$ extends to an isometric homomorphism $\mathcal{O}^p(Q)\rightarrow\mathcal{O}^p(Q')$, which we also call $\phi$.

Now if $(S,T,E)$ is a Cuntz-Krieger $Q$-family in $\mathcal{B}(L^p(\mu))$, let $\varphi$ be the induced spatial representation of $L_Q$. Construct a graph $Q'$, a measure space $(Y,\mathcal{C},\nu)$, a spatial representation $\varphi'\colon L_{Q'}\rightarrow\mathcal{B}(L^p(\nu))$ and an isometry $\sigma\colon \mathcal{B}(L^p(\mu))\rightarrow\mathcal{B}(L^p(\nu))$ using the method detailed above. Let $(s',t',e')$ be the generating family of $L_{Q'}$ and set $(S',T',E')=(\varphi'(s'),\varphi'(t'),\varphi'(e'))$. Then the canonical homomorphism $\pi'\colon\mathcal{O}^p(Q')\rightarrow \mathcal{B}(L^p(\nu))$ satisfies $\sigma\circ\pi=\pi'\circ\phi$, which is what we wanted to prove.
\end{proof}


Now we are equipped to prove our final theorems in the desired generality. We begin by the gauge-invariant uniqueness theorem.

\begin{thm}\label{gauge-uniqueness}(Gauge-invariance uniqueness.)
    Let $p\in [1,\infty)$, let $Q$ be a graph, let $A$ be a unitizable $L^p$-operator algebra, and let $(S,T,E)$ be a Cuntz-Krieger $Q$-family in $A$ such that $E_v\neq 0$ for all $v\in Q^0$. Suppose that there is an isometric action $\beta\colon \mathbb{T}\rightarrow \text{Aut}(A)$ such that $\beta_z(S_a)=zS_a,\ \beta_z(T_a)=\overline{z}T_a$ and $\beta_z(E_v)=E_v$ for all $z\in\mathbb{T}$, for all $a\in Q^1$ and for all $v\in Q^0$. Then the canonical homomorphism $\pi\colon \mathcal{O}^p(Q)\rightarrow A$ is injective.
\end{thm}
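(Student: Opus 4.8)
The plan is to combine equivariance of $\pi$ with the faithfulness of the spectral projections (\autoref{phindef}(4)) to reduce injectivity of $\pi$ to injectivity on each spectral subspace of the gauge action, and then to establish that by cutting $Q$ down to a regular countable graph, where \autoref{isometryspectral} applies. Since $p=2$ is the classical gauge-invariant uniqueness theorem, I assume $p\in[1,\infty)\setminus\{2\}$. First note that $\pi$ is equivariant for the gauge action $\gamma$ on $\mathcal{O}^p(Q)$ and for $\beta$: the homomorphisms $\pi\circ\gamma_z$ and $\beta_z\circ\pi$ agree on the generators $s_a,t_a,e_v$, hence everywhere. Integrating against $z^{-n}$ yields $\pi\circ\Phi_n=\Phi_n^\beta\circ\pi$ for all $n\in\Z$, where $\Phi_n^\beta$ is the map of \autoref{phindef} for $\beta$. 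So if $\pi(x)=0$, then $\pi(\Phi_n(x))=\Phi_n^\beta(\pi(x))=0$ with $\Phi_n(x)\in\mathcal{O}^p(Q)^n$; hence, once $\pi$ is known to be injective on each $\mathcal{O}^p(Q)^n$, \autoref{phindef}(4) applied to $\gamma$ gives $\Phi_n(x)=0$ for all $n$, so $x=0$. It remains to prove injectivity of $\pi$ on each $\mathcal{O}^p(Q)^n$.

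I would first reduce to countable $Q$. By \autoref{corephi}, any $x\in\mathcal{O}^p(Q)^n$ is a norm-limit of finite sums of elements $s_\alpha t_\beta$ with $|\alpha|-|\beta|=n$, each supported on a finite subgraph; the union of these countably many finite subgraphs sits inside a countable Cuntz-Krieger subgraph $R$, and by \autoref{cor:DirLim} the structure map $\iota_R\colon\mathcal{O}^p(R)\to\mathcal{O}^p(Q)$ is isometric and gauge-equivariant, so $x\in\iota_R(\mathcal{O}^p(R)^n)$. The restricted family $(\{S_a\}_{a\in R^1},\{T_a\}_{a\in R^1},\{E_v\}_{v\in R^0})$ is a Cuntz-Krieger $R$-family in $A$ still satisfying the hypotheses of the theorem (with the same $\beta$, and with the $E_v$ nonzero), so knowing the theorem for the countable graph $R$ makes $\pi\circ\iota_R$ injective; thus $\pi(x)=0$ forces $x=0$. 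Hence it suffices to treat countable $Q$.

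For countable $Q$ I would desingularize. Replace $A$ by a faithful representation on the $L^p$-space of a localizable measure space, chosen (by an averaging/induction construction) so that it intertwines $\beta$ with an isometric circle action implemented by invertible spatial isometries, with $\pi$ still equivariant. Then \autoref{desingularize} provides a countable graph $Q'$ with no sources, sinks, infinite emitters or infinite receivers, a Cuntz-Krieger $Q'$-family with all vertex idempotents nonzero, and isometric homomorphisms $\phi,\sigma$ with $\sigma\circ\pi=\pi'\circ\phi$, where $\pi'$ is the canonical map of the desingularized family. Extending the implementing isometries to the enlarged $L^p$-space --- by the identity on the ``old'' part and by suitable scalar phases along the attached heads and tails --- gives an isometric circle action making $\pi'$ equivariant for the gauge action of $Q'$. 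Since $Q'$ is regular with no infinite emitters (and no sinks), \autoref{isometryspectral} shows $\pi'$ is isometric, in particular injective, on each spectral subspace $\mathcal{O}^p(Q')^n$; running the argument of the first paragraph for $\pi'$ then shows $\pi'$ is injective, and since $\sigma$, $\phi$ and the chosen representation of $A$ are injective, so is $\pi$.

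I expect the main obstacle to be this last step. \autoref{desingularize} and \autoref{isometryspectral} are phrased for inputs of different shape --- a concrete Cuntz-Krieger family on an $L^p$-space versus a family in an abstract unitizable $L^p$-operator algebra carrying a circle action --- so one must carry the $\mathbb{T}$-action and the nonvanishing of the vertex idempotents coherently through both the faithful representation of $A$ and the desingularization. In particular, the desingularizing embedding $L_Q\to L_{Q'}$ is not graded for the standard gauge actions (an edge at a singular vertex is sent to a path of length greater than one), so spectral subspaces cannot simply be transported and the implementing isometries of the action must be extended by hand so that the desingularized representation is genuinely gauge-equivariant; this bookkeeping, rather than any single hard estimate, is where the real work lies.
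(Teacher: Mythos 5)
Your outline coincides with the paper's proof: reduce to countable graphs via Cuntz--Krieger subgraphs and \autoref{cor:DirLim}, pass to a desingularized graph via \autoref{desingularize}, and then combine equivariant averaging with \autoref{isometryspectral} and part~(4) of \autoref{phindef}. Your shortcut $\pi\circ\Phi_n=\Phi_n^\beta\circ\pi$, which yields $\|\pi(\Phi_n(a))\|\leq\|\pi(a)\|$ for all $n$ at once, is correct (the paper instead reduces $n\neq 0$ to $n=0$, a detour it needs only so that the same reduction can be recycled in the Cuntz--Krieger uniqueness theorem, where no action on $A$ is available), and your reduction to countable graphs is fine.

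The genuine gap is exactly the step you dismiss as bookkeeping: transporting the circle action through \autoref{desingularize}. Spatially implementing $\beta$ (say by the regular-representation construction on $L^p(\mathbb{T}\times X)$) is unproblematic, but your recipe for extending the implementing isometries --- identity on the old part, scalar phases on the new summands --- does not yield an action compatible with the gauge action of $Q'$. The obstruction comes from the generators that connect the new summands to the old space: the first edge of an attached head or tail is represented by a fixed identification of $L^p(\mu_{X_v})$ with the first new component, and the edge $g_n$ replacing $a_n$ at an infinite emitter (or receiver) is represented by an operator from $L^p(\nu_{Y_{v_n}})$ into $L^p(\nu_{X_{a_n}})$, landing inside the original space. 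Conjugating such an operator by ``$u_z$ on the old part, scalar on the new part'' multiplies it by that scalar and composes with $u_z$ on the old side; for the result to be $z$ times the operator one would need $u_z$ to act as a scalar on the non-null subspaces $X_v$, respectively $\eta_{a_n}(Y_{v_n})\subseteq X_{a_n}$, which is not implied by $\mathrm{Ad}(u_z)=\beta_z$ on $\pi(\mathcal{O}^p(Q))$: that hypothesis controls conjugation of the operators $S_a$, not the restriction of $u_z$ to their ranges.

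One can repair heads and tails by using phases times conjugated copies of $u_z|_{X_v}$ on the components $X_v\times\{n\}$, but at infinite emitters and receivers the constraints become inconsistent: the $f_n$-edges force the phase to advance by one step along the added tail, while the $g_n$-edges, whose target sits in the old space and carries no phase, force it to stay constant; moreover $\varphi'(s_{g_n})$ in \autoref{desingularize} is the \emph{unweighted} spatial isometry induced by $\eta_{a_n}$, whereas $\beta_z$ only rescales the weighted operator $S_{a_n}$, so conjugation by $u_z$ twists $\varphi'(s_{g_n})$ by a nonconstant weight. Thus, with the desingularized family exactly as constructed, a spatially implemented gauge-compatible action need not exist; making your step work requires either modifying the desingularized family (for instance letting the $g_n$-operators carry the original weights, redoing the phase bookkeeping, and re-verifying $\sigma\circ\pi=\pi'\circ\phi$) or avoiding the transfer altogether by proving the estimate $\|\pi(\Phi_n(a))\|\leq\|\pi(a)\|$ and the spectral-subspace injectivity directly for graphs with singular vertices. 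In fairness, the published proof is silent on this same point (after invoking \autoref{desingularize} it continues to use $\beta$ for the new graph), so you have correctly located the delicate spot --- but the fix you sketch does not work as stated, and what is needed there is a genuine argument, not bookkeeping.
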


\begin{proof}
    We first explain how to reduce the statement to the case where $Q$ is countable. By \autoref{cor:DirLim}, we have $\mathcal{O}^p(Q)\cong \varinjlim \mathcal{O}^p(R)$ for $R\in\text{CK}(Q)$. For each such $R$, it is clear that $L_Q(R)$ is invariant under the gauge action, and by continuity so is $\mathcal{O}^p(R)$. By similar reasoning, $\pi(\mathcal{O}^p(R))$ is invariant under $\beta$. Hence, for each $R\in \text{CK}(Q)$, the image of $\mathcal{O}^p(R)$ under $\pi$ is equipped with an isometric action as described above. The statement of the theorem is equivalent to the fact that if $\varphi\colon \mathcal{O}^p(Q)\to A$ is a contractive homomorphism into an $L^p$-operator algebra whose kernel intersects $C_0(Q^0)$ trivially, then $\varphi$ is injective. Let $I$ denote the kernel of $\varphi$. By the description of ideal in direct limits of Banach algebras (with isometric maps), we have $I={0}$ if and only if $I\cap \mathcal{O}^p(R)={0}$ for all $R\in$ CK$(Q)$. In other words, if we show that $\varphi$ is injective when restricted to $\mathcal{O}^p(R)$, then the result will follow.

    We will assume from now on that $Q$ is countable. By representing $A$ on the $L^p$-space of a localizable measure, we may assume that $(S,T,E)$ is a family of operators on the $L^p$-space $\mathcal{E}$ of a localizable measure. Using \autoref{desingularize}, we may assume that $Q$ has no sinks, sources, infinite emitters or infinite receivers. Suppose that 
    \begin{equation}\label{eqn:estimate}
        \|\pi(\Phi_n(a))\|\leq \|\pi(a)\|\tag{$6.1$}
    \end{equation}
    for each $n\in\Z$ and for each $a\in\mathcal{O}^p(Q)$. Then if $a\in\mathcal{O}^p(Q)$ satisfies $\pi(a)=0$ we get $\pi(\Phi_n(a))=0$ for all $n\in\Z$. By \autoref{isometryspectral}, we get $\Phi_n(a)=0$ for all $n\in\Z$, and this implies $a=0$ by part (4) of \autoref{phindef}. Thus, we set out to prove the inequality in (\ref{eqn:estimate}).

    We first prove (\ref{eqn:estimate}) for $n=0$, and then we show that the general case follows. From the description of $\beta$, it is clear that $\beta_z\circ\pi=\pi\circ\gamma_z$ for all $z\in\mathbb{T}$. Hence
    \begin{align*}
        \|\pi(\Phi_0(a))\|&\leq \int_\mathbb{T}\|\pi(\gamma_z(a))\|dz=\int_\mathbb{T}\|\beta_z(\pi(a))\|dz\\
        &=\int_\mathbb{T}\|\pi(a)\|dz=\|\pi(a)\|
    \end{align*}
    for each $a\in\mathcal{O}^p(Q)$.
    
    Now we prove that (\ref{eqn:estimate}) holds for general $n$. By continuity, it suffices to show the inequality for $a\in L_Q$. Suppose that $n\geq 0$, and fix $a=\sum_{(\alpha,\beta)\in F}\lambda_{\alpha,\beta}s_\alpha t_\beta$, where $F$ is a finite family of pairs $(\alpha,\beta)\in Q^*\times Q^*$. Set $k=\max\{|\alpha|,|\beta|\colon (\alpha,\beta)\in F\}$. Let $(\alpha,\beta)\in F$ with $\sor(\alpha)=\sor(\beta)=w$, and suppose $\min\{|\alpha|,|\beta|\}<k$. As $Q$ is source-free, we may write $s_\alpha t_\beta=\sum_{a\in \ran^{-1}(w)}s_{\alpha a}t_{\beta a}$, and repeating this process we eventually obtain a presentation of $s_\alpha t_\beta$ as a finite sum of elements of the form $s_{\alpha\gamma}t_{\beta\delta}$ with $\min\{|\alpha\gamma|,|\beta\delta|\}=k$. Thus, we may modify $F$ to ensure that $k=\min\{|\alpha|,|\beta|\}$ for all $(\alpha,\beta)\in F$. Then in particular $\Phi_n(a)\in\mathcal{F}_k^n$, where $\mathcal{F}_k^n$ is defined as in \autoref{isometryspectral}.
    
    Since $Q$ has no sinks, we may use the method in \autoref{isometryspectral} to find $x$ and $x^*$ with $\|x\|=\|x^*\|=1$ such that $\Phi_n(a)xx^*=\Phi_n(a)$ and such that $\Phi_n(a)x$ belongs to $\mathcal{O}^p(Q)^\gamma$. One checks that we have $\Phi_n(a)x=\Phi_0(ax)$. Thus, as $\|\pi\circ\Phi_0\|\leq \|\pi\|$ we obtain
    $$\|\pi(\Phi_n(a))\|=\|\pi(\Phi_n(a)x)\|=\|\pi(\Phi_0(ax))\|\leq \|\pi(ax)\|\leq\|\pi(a)\|,$$
    which is the estimate we wanted to show. The case $n<0$ is similar.
\end{proof}

We proceed to the Cuntz-Krieger uniqueness theorem. For a path $\alpha\in Q^*$, we say that $\alpha$ is \emph{nonreturning} if $\alpha_k\neq\alpha_{|\alpha|}$ for $k<|\alpha|$.

\begin{thm}\label{injectivity} (Cuntz-Krieger uniqueness.)
    Let $p\in [1,\infty)$, let $Q$ be a graph in which each cycle has an entry, let $A$ be a unitizable $L^p$-operator algebra, and let $(S,T,E)$ be a Cuntz-Krieger $Q$-family in $A$ such that $E_v\neq 0$ for all $v\in Q^0$. Then the canonical homomorphism $\pi\colon \mathcal{O}^p(Q)\rightarrow A$ is injective.
\end{thm}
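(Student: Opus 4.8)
The plan is to run the same machine as in the proof of \autoref{gauge-uniqueness}, with the averaging over the circle action on $A$ (unavailable here) replaced by a compression argument in which the hypothesis ``every cycle has an entry'' is the crucial input. As usual we assume $p\neq 2$, the case $p=2$ being \cite[Theorem~2.4]{raeburn}.

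\emph{Reductions.} First, by \autoref{cor:DirLim} we write $\mathcal{O}^p(Q)=\varinjlim\mathcal{O}^p(R)$ over the countable Cuntz--Krieger subgraphs $R$ in which every cycle has an entry; since an ideal of a direct limit of Banach algebras with isometric connecting maps vanishes as soon as it meets each $\mathcal{O}^p(R)$ trivially, it suffices to treat countable $Q$ (with every cycle having an entry). Representing $A$ isometrically on the $L^p$-space of a localizable measure and invoking \autoref{desingularize}, we may further assume that $Q$ has no sources, sinks, infinite emitters or infinite receivers; in particular $Q$ is regular with no infinite emitters, so \autoref{isometryspectral}, \autoref{isocore} and the identifications $\mathcal{F}_k=\bigoplus_{v\in Q^0}\mathcal{F}_k(v)$ are available. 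Finally, exactly as in \autoref{gauge-uniqueness}, everything reduces to the single inequality
\[
\|\pi(\Phi_0(b))\|\le\|\pi(b)\|\qquad(b\in\mathcal{O}^p(Q)):
\]
granting it, the reduction of an arbitrary spectral index to $n=0$ carried out there (which uses only $\|\pi\circ\Phi_0\|\le\|\pi\|$ and the absence of sinks) gives $\pi(\Phi_n(b))=0$ whenever $\pi(b)=0$, hence $\Phi_n(b)=0$ for all $n$ by \autoref{isometryspectral}, hence $b=0$ by \autoref{phindef}(4).

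\emph{The key inequality.} By continuity we take $b\in L_Q$. Using CK2 repeatedly (valid as $Q$ is regular) we write $b=\sum_{(\mu,\nu)\in F}\lambda_{\mu,\nu}s_\mu t_\nu$ with $F$ finite and $\min\{|\mu|,|\nu|\}=k$ for every pair, for a fixed $k\in\N$; then $a_0:=\Phi_0(b)=\sum_{|\mu|=|\nu|=k}\lambda_{\mu,\nu}s_\mu t_\nu$ lies in $\mathcal{F}_k$, while each remaining monomial of $b$ is of the form $s_\mu t_\nu$ with exactly one of $|\mu|,|\nu|$ equal to $k$ and the other strictly larger. Since the idempotents $\pi\big(\sum_{\alpha\in Q^k,\ \sor(\alpha)=v}s_\alpha t_\alpha\big)$ are pairwise orthogonal and hermitian, \autoref{directsum} identifies $\pi(\mathcal{F}_k)$ with $\bigoplus_v\pi(\mathcal{F}_k(v))$ with the direct-sum norm; as only finitely many blocks of $a_0$ are nonzero, I would choose $v_0$ with $\|\pi(a_0^{(v_0)})\|=\|\pi(a_0)\|$, where $a_0^{(v_0)}\in\mathcal{F}_k(v_0)$ is the $v_0$-component. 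Writing $Q^k_{v_0}=\{\alpha\in Q^k:\sor(\alpha)=v_0\}$, I would pick (this is the combinatorial step, see below) a path $\gamma_0$ with $\ran(\gamma_0)=v_0$, with $|\gamma_0|$ larger than the lengths of all monomials of $b$, and incomparable with each of the finitely many ``tail'' paths of range $v_0$ produced when $b-a_0$ is multiplied out; then set
\[
q=\sum_{\alpha\in Q^k_{v_0}}s_{\alpha\gamma_0}\,t_{\alpha\gamma_0},
\]
a hermitian idempotent, the summands being orthogonal since the $\alpha\gamma_0$ are pairwise incomparable. A bookkeeping computation with the relations of \autoref{tsrelationcomparable} shows $q\,s_\alpha t_\beta\,q=s_{\alpha\gamma_0}t_{\beta\gamma_0}$ when $\sor(\alpha)=\sor(\beta)=v_0$ and $q\,s_\alpha t_\beta\,q=0$ otherwise, and that $q(b-a_0)q=0$ by the incomparability of $\gamma_0$ with those tail paths; hence $qbq=qa_0q=\iota(a_0^{(v_0)})$, where $\iota\colon\mathcal{F}_k(v_0)\to\mathcal{O}^p(Q)$ is the matrix-unit homomorphism $s_\alpha t_\beta\mapsto s_{\alpha\gamma_0}t_{\beta\gamma_0}$. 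Because $E_v\neq 0$ we have $S_{\alpha\gamma_0}T_{\alpha\gamma_0}\neq 0$, and it is a hermitian idempotent by \autoref{propertiesspatialfam}, so the argument of \autoref{matrixiso} (via the equivalence of (2) and (3) in \cite[Theorem~7.2]{phillips}) shows $\pi\circ\iota$ is isometric; also $\pi$ is isometric on $\mathcal{F}_k(v_0)\subseteq\mathcal{O}^p(Q)^\gamma$ by \autoref{isocore}, and $\pi(q)$ is a hermitian (hence contractive) idempotent. Combining these,
\[
\|\pi(a_0)\|=\|\pi(a_0^{(v_0)})\|=\|a_0^{(v_0)}\|=\|(\pi\circ\iota)(a_0^{(v_0)})\|=\|\pi(qbq)\|\le\|\pi(b)\|,
\]
which is the desired inequality.

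\emph{The combinatorial step, and the main obstacle.} What remains is the purely graph-theoretic statement invoked above: \textbf{if every cycle of $Q$ has an entry (and $Q$ has no sources), then for every vertex $v_0$, every finite set $\mathcal{P}$ of paths of range $v_0$, and every $N\in\N$, there is a path $\gamma_0$ with $\ran(\gamma_0)=v_0$, $|\gamma_0|\ge N$, that is incomparable with every member of $\mathcal{P}$.} This is the heart of the theorem and I expect it to be \textbf{the main obstacle}: one builds $\gamma_0$ one edge at a time, and the point is that whenever the construction would be trapped into prolonging some $\tau\in\mathcal{P}$ or a cycle, the hypothesis that every cycle has an entry supplies a vertex with an alternative incoming edge along which to branch away; the \emph{nonreturning} paths introduced just before the statement are the natural device for organising this recursion. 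The only other work is the multiplication formulas for $q\,s_\mu t_\nu\,q$ in the various length regimes, which is routine but must be checked carefully.
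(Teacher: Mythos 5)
Your overall architecture coincides with the paper's: reduce to countable, desingularized graphs via \autoref{cor:DirLim} and \autoref{desingularize}, reduce injectivity to the estimate $\|\pi(\Phi_0(b))\|\le\|\pi(b)\|$, pick a dominant vertex block of $\Phi_0(b)$ in $\mathcal{F}_k$, compress by an idempotent built from length-$k$ paths with source $v_0$ extended by a long path into $v_0$, and conclude with incompressibility of spatial matrix algebras together with isometry of $\pi$ on the core. The gap is exactly where you flag it, but it is worse than being unproven: the combinatorial statement you isolate is false. Consider the graph with edges $e\colon w\to v_0$, $f\colon v_0\to w$, $g\colon u\to w$, and an infinite head feeding $u$: it has no sources, sinks, or infinite emitters/receivers, and its unique cycle (through $e$ and $f$) has the entry $g$; yet $\ran^{-1}(v_0)=\{e\}$, so \emph{every} path of positive length with range $v_0$ begins with $e$ and is therefore comparable with the length-one path $e$. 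Since the tail set $\mathcal{P}$ can contain $e$ (for instance the monomial $s_e t_{fe}$, with $k=1$, produces the tail $e$), no path $\gamma_0$ with range $v_0$, large length, and incomparable with all tails exists, and your route to $q(b-a_0)q=0$ breaks down.

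The correct ingredient --- which is what the paper uses, quoting \cite[Lemma 3.7]{raeburn} --- is a long \emph{nonreturning} path $\lambda$ with $\ran(\lambda)=v_0$ and $|\lambda|$ exceeding all lengths occurring in $b$; such paths exist precisely under the hypotheses ``every cycle has an entry'' and ``no sources''. With only this weaker property the bookkeeping must not discard the left factor of $q$: for an off-diagonal monomial ($|\mu|=k<|\nu|$) the surviving product is $t_{\nu\lambda}s_{\alpha'\lambda}$, and nonvanishing would force $\nu\lambda=\alpha'\lambda\delta$ with $0<|\delta|<|\lambda|$, hence $\lambda_{|\lambda|}=\lambda_{|\lambda|-|\delta|}$, contradicting nonreturning. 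Your version, which tests only $t_{\nu}s_{\alpha'\lambda}$, genuinely needs the stronger (and generally unattainable) requirement that no tail be a prefix of $\lambda$; so the argument has to be restructured as above rather than merely having the existence claim supplied. The remaining steps of your proposal (choice of $v_0$ via the direct-sum decomposition, isometry of $\pi\circ\iota$ from $E_v\neq 0$ and Phillips' theorem, isometry of $\pi$ on $\mathcal{F}_k(v_0)\subseteq\mathcal{O}^p(Q)^\gamma$ by \autoref{isocore}, and the final chain of (in)equalities) match the paper's proof and are fine once the compression idempotent is built from a nonreturning path.
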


\begin{proof}
    By \autoref{cor:DirLim}, if we denote by CK$_e(Q)$ the directed set of all countable Cuntz-Krieger subgraphs of $Q$ for which every cycle has an entry, then $\mathcal{O}^p(Q)\cong \varinjlim \mathcal{O}^p(R)$ for $R\in\text{CK}_e(Q)$. Hence, just as in \autoref{gauge-uniqueness}, we may assume that $Q$ is countable, and we may represent $A$ on the $L^p$-space of a localizable measure and use \autoref{desingularize} to assume that $Q$ has no sinks, sources, infinite emitters or infinite receivers. As in \autoref{gauge-uniqueness}, we prove the inequality (\ref{eqn:estimate}) for $n=0$, from which the case of $n\in\Z$ follows, and hence the statement of the theorem.

    By continuity, it suffices to show (\ref{eqn:estimate}) for $a\in L_Q$. We fix $a=\sum_{(\alpha,\beta)\in F}\lambda_{\alpha,\beta}s_\alpha t_\beta$, where $F$ is a finite family of pairs $(\alpha,\beta)\in Q^*\times Q^*$, and we shall find an operator $V\in A$ with $\|V\|\leq 1$ such that 
    \begin{enumerate}
        \item $VS_\alpha T_\beta V=0$ if $(\alpha,\beta)\in F$ and $|\alpha|\neq|\beta|$,
        
        \item $\|V\pi(\Phi_0(a)) V\|=\|\pi(\Phi_0(a))\|$.
    \end{enumerate}
    This will give 
    $$\|\pi(\Phi_0(a))\|=\|V\pi(\Phi_0(a))V\|=\|V\pi(a)V\|\leq \|\pi(a)\|,$$
    as desired.

    As in \autoref{gauge-uniqueness}, we can modify $F$ to make sure that $\Phi_0(a)\in\mathcal{F}_k$ for some $k\geq 0$. By \autoref{fkdirectsum} we have $\mathcal{F}_k=\bigoplus_{v\in Q^0}\mathcal{F}_k(v)$, so there is some $v\in Q^0$ such that for
    $$b_v=\sum_{\substack{(\alpha,\beta)\in F\\ \alpha,\beta\in Q^k\cap \sor^{-1}(v)}}\lambda_{\alpha,\beta}s_\alpha t_\beta$$
    we have $\|\Phi_0(a)\|=\|b_v\|$. We set $G=\{\alpha,\beta\colon (\alpha,\beta)\in F,\ \alpha,\beta\in Q^k\cap \sor^{-1}(v)\}$. Then $\text{span}\{s_\alpha t_\beta\colon \alpha,\beta\in G\}$ is a finite-dimensional matrix algebra containing $b_v$.

    By \cite[Lemma 3.7]{raeburn}, we may choose a nonreturning path $\lambda$ with $\ran(\lambda)=v$ and with $|\lambda|>\max\{|\alpha|,|\beta|\colon (\alpha,\beta)\in F\}$. Set $V=\sum_{\tau\in G}S_{\tau\lambda}T_{\tau\lambda}$. As the paths $\tau\lambda$ are distinct but of equal length they are incomparable, and hence using the notation of \autoref{orthogonal} the sets $X_{\tau\lambda}$, for $\tau\in G$, are disjoint. Thus $V$ is a finite sum of pairwise orthogonal spatial idempotents, and hence $\|V\|=1$.

    We observe that for $|\alpha|=k$ and for $\tau\in G$, we have $T_{\tau\lambda}S_\alpha=\delta_{\tau,\alpha}T_\lambda$, so that
    $$VS_\alpha=\mathbbm{1}_G(\alpha)S_{\alpha\lambda}T_{\alpha\lambda}S_\alpha=\mathbbm{1}_G(\alpha)S_{\alpha\lambda}T_\lambda.$$ 
    Similarily, as $T_\alpha S_{\tau\lambda}=\delta_{\alpha,\tau}S_\lambda$, we have $T_\alpha V=\mathbbm{1}_G(\alpha) S_\lambda T_{\alpha\lambda}$.

    Suppose that $(\alpha,\beta)\in F$ satisfies $|\alpha|\neq |\beta|$. Assume that $|\alpha|=k$. Then 
    $$VS_\alpha T_\beta V=\sum_{\tau\in G}\mathbbm{1}_G(\alpha)S_{\alpha\lambda}T_{\beta\lambda} S_{\tau\lambda}T_{\tau\lambda}.$$
    In order for $T_{\beta\lambda} S_{\tau\lambda}$ to be nonzero, we need that $\beta\lambda=\tau\lambda\gamma$ for some path $\gamma$ with $|\gamma|=|\beta|-|\tau|=|\beta|-k$, which is not possible as $0<|\beta|-k<|\lambda|$ and $\lambda$ is nonreturning. Hence $VS_\alpha T_\beta V=0$. The case where $|\beta|=k$ is similar, and hence $V$ satisfies (1).

    To show property (2), we first note that if $|\alpha|=|\beta|$, then
    $$VS_\alpha T_\beta V=\mathbbm{1}_G(\alpha)\mathbbm{1}_G(\beta)S_{\alpha\lambda}T_\lambda S_\lambda T_{\beta\lambda}=\mathbbm{1}_G(\alpha)\mathbbm{1}_G(\beta)S_{\alpha\lambda} T_{\beta\lambda}.$$
    By assumption, we have $E_{\sor(\tau\lambda)}\neq 0$, so $\{S_{\alpha\lambda} T_{\beta\lambda}\colon \alpha,\beta\in G\}$ is a family of nonzero matrix units. Therefore, the map
    $$\text{span}\{s_\alpha t_\beta\colon \alpha,\beta\in G\}\rightarrow \text{span}\{S_{\alpha\lambda} T_{\beta\lambda}\colon \alpha,\beta\in G\}$$
    given by $b \mapsto V\pi(b)V$
    is a nonzero contractive homomorphism out of a matrix algebra in $\mathcal{O}^p(Q)$. By \autoref{matrixiso}, the domain has the spatial norm, and thus by \autoref{afincompressible}, the homomorphism is isometric. Thus we have
    $$\|\pi(\Phi_0(a))\|=\|\Phi_0(a)\|=\|b_v\|=\|V\pi(b_v)V\|=\|V\pi(\Phi_0(a))V\|,$$
    and we have proven the second property. This finishes the proof of the inequality $\|\pi(\Phi_0(a))\|\leq \|\pi(a)\|$ for all $a\in\mathcal{O}^p(Q)$.
\end{proof}

When $p=2$, \autoref{uniqueness} holds in a stronger form, namely that the homomorphism $\pi$ is an isometry; this is an immediate consequence of the fact that injective homomorphisms between C*-algebras are automatically isometric. While the corresponding statement is false in general for $L^p$-operator algebras (as can be seen, for example, by considering the Gelfand transform $F^p(\Z)\to C(S^1)$), in some special cases we do in fact get an isometric homomorphism:

\begin{thm}\label{uniqueness}
    Let $p\in [1,\infty)$, and let $Q$ be an acyclic graph. Let $(S,T,E)$ be a Cuntz-Krieger $Q$-family in a unitizable $L^p$-operator algebra $A$ such that $E_v\neq 0$ for all $v\in Q^0$. Then the canonical homomorphism $\pi\colon\mathcal{O}^p(Q)\rightarrow A$ is an isometry.
\end{thm}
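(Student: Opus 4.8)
The plan is to feed the Cuntz--Krieger uniqueness theorem into the incompressibility of spatial $L^p$-AF-algebras. As usual, the case $p=2$ is special but easy: there $\pi$ is an injective homomorphism of C*-algebras, hence automatically isometric, so from now on I assume $p\in[1,\infty)\setminus\{2\}$.

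First I would note that an acyclic graph vacuously has the property that every cycle has an entry, so \autoref{injectivity} applies and shows that the canonical homomorphism $\pi\colon\mathcal{O}^p(Q)\to A$ is injective. Next, since $Q$ is acyclic, \autoref{acyclicaf} tells us that $\mathcal{O}^p(Q)$ is a spatial $L^p$-AF-algebra, and hence by \autoref{afincompressible} it is $p$-incompressible.

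To conclude, recall that $A$, being an $L^p$-operator algebra, admits an isometric homomorphism $j\colon A\to\mathcal{B}(\mathcal{E})$ for some $L^p$-space $\mathcal{E}$. Then $j\circ\pi\colon\mathcal{O}^p(Q)\to\mathcal{B}(\mathcal{E})$ is contractive (by \autoref{opquniversal}) and injective, so $p$-incompressibility of $\mathcal{O}^p(Q)$ forces $j\circ\pi$ to be isometric. Since $j$ is isometric, $\pi$ is isometric as well, which is the assertion.

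There is no genuine obstacle in this argument: the substance lives entirely in the results being cited, namely \autoref{injectivity} and \autoref{acyclicaf}, and the present proof is just their assembly through \autoref{afincompressible}. The only point to keep an eye on is that the definition of an $L^p$-operator algebra does supply the isometric embedding of $A$ into some $\mathcal{B}(\mathcal{E})$ with $\mathcal{E}$ an $L^p$-space, which is exactly the input that $p$-incompressibility requires; no passage to a unitization or to a localizable model is needed at this stage.
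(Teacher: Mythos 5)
Your proof is correct and follows essentially the same route as the paper: combine \autoref{acyclicaf} with \autoref{afincompressible} to get $p$-incompressibility, and use \autoref{injectivity} (applied vacuously since an acyclic graph has no cycles) for injectivity. Your explicit handling of the embedding $A\hookrightarrow\mathcal{B}(\mathcal{E})$ and of the case $p=2$ just spells out details the paper leaves implicit.
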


\begin{proof}
    By \autoref{acyclicaf}, $\mathcal{O}^p(Q)$ is a spatial $L^p$-AF-algebra, so by \autoref{afincompressible} it is $p$-incompressible. Thus the result follows immediately from \autoref{injectivity}.
\end{proof}

\end{document}